\long\def\comment#1{}
\renewcommand\vec[1]{\ensuremath\boldsymbol{#1}}
\newcommand{\Ocal}{\ensuremath{\mathcal{O}}}
\theoremstyle{plain}
\newtheorem{theorem}{Theorem}
\numberwithin{theorem}{section}
\newtheorem{proposition}{Proposition}
\numberwithin{proposition}{section}
\newtheorem{lemma}{Lemma}
\numberwithin{lemma}{section}
\newtheorem{definition}{Definition}
\numberwithin{definition}{section}
\numberwithin{condition}{section}
\numberwithin{problem}{section}
\newtheorem{corollary}{Corollary}
\numberwithin{corollary}{section}
\numberwithin{assumption}{section}
\newtheorem{example}{Example}
\numberwithin{example}{section}
\numberwithin{conjecture}{section}
\theoremstyle{definition}
\numberwithin{remark}{section}
\newcommand{\eqsplit}[2][*]%
  {\ifthenelse{\equal{#1}{*} \or\equal{#1}{<???>}
                     \or\equal{#1}{nn} \or\equal{#1}{ } \or\equal{#1}{}}
    { \begin{align*}%
         #2 %
         \end{align*}%
        }
    {\begin{equation}\label{#1}\begin{split}\allowdisplaybreaks%
         #2%
         \end{split}\end{equation}
        }
  }
\renewenvironment{abstract}
 {\small
  \begin{center}
  \bfseries \abstractname\vspace{-.5em}\vspace{0pt}
  \end{center}
  \list{}{%
    \setlength{\leftmargin}{15mm}
    \setlength{\rightmargin}{\leftmargin}%
  }%
  \item\relax}
 {\endlist}
 \title{\textbf{Robust estimation of mixing measures in finite mixture models}}
\author[*]{Nhat Ho}
\author[**]{XuanLong Nguyen}
\author[**]{Ya'\kern-2pt{}acov Ritov}
\affil[*]{Department of EECS, University of California, Berkeley}
\affil[**]{Department of Statistics, University of Michigan, Ann Arbor}
\begin{document}
\maketitle




\begin{abstract}
In finite mixture models, apart from underlying mixing measure, true kernel density function
of each subpopulation in the data is, in many scenarios, unknown. Perhaps the most popular
approach is to choose some kernel functions that we empirically believe our data are
generated from and use these kernels to fit our models. Nevertheless, as long as the
chosen kernel and the true kernel are different, statistical inference of mixing measure
under this setting will be highly unstable. To overcome this challenge, we propose flexible
and efficient robust estimators of the mixing measure in these models, which are inspired
by the idea of minimum Hellinger distance estimator, model selection criteria, and
superefficiency phenomenon. We demonstrate that our estimators consistently recover the
true number of components and achieve the optimal convergence rates of parameter
estimation under both the well- and mis-specified kernel settings for any fixed bandwidth.
These desirable asymptotic properties are illustrated via careful simulation studies with
both synthetic and real data. 
\footnote{This research is supported in part by grants
NSF CCF-1115769, NSF CAREER DMS-1351362, and NSF CNS-1409303 to XN. YR gratefully acknowledges the partial support from NSF DMS-1712962.}

AMS 2000 subject classification: Primary 62F15, 62G05; secondary 62G20.

Keywords and phrases: model misspecification, convergence rates, mixture models, Fisher singularities, strong identifiability, minimum distance estimator, model selection, superefficiency, Wasserstein distances.
\end{abstract}
\section{Introduction}
\label{Section:introduction}
Finite mixture models have long been a popular modeling tool for making inference about the
heterogeneity in data, starting, at least, with the classical work of \cite{Pearson-1894} on
biometrical ratios on crabs. They have been used in numerous domains arising from biological,
physical, and social sciences. For a comprehensive introduction of statistical inference in
mixture models, we refer the readers to the books of \citep{Mclachlan-1988, Linsay-95,
Mclachlan-2000}.

In finite mixture models, we have our data $X_{1},X_{2}, \ldots, X_{n} \in \mathcal{X} \subset \mathbb{R}^{d}$ ($d \geq 1$) to be
i.i.d observations from a finite mixture with density function
\begin{eqnarray}
{\displaystyle
p_{G_{0},f_{0}}(x) := \int {f_{0}(x|\theta)}dG_{0}(\theta)} = \sum \limits_{i=1}^{k_{0}}{p_{i}^{0}f_{0}(x|\theta_{i}^{0})}, \nonumber
\end{eqnarray}
where $G_{0}=\sum_{i=1}^{k_{0}} {p_{i}^{0}\delta_{\theta_{i}^{0}}}$
is a true but \emph{unknown} mixing measure with exactly $k_{0}< \infty$ non-zero components 
and $\biggr\{f_{0}(x|\theta), \theta \in \Theta \subset \mathbb{R}^{d_{1}} \biggr\}$ is 
a true family of density functions, possibly \emph{partially unknown} where $d_{1} \geq 1$. There 
are essentially three principal challenges to the models that have attracted a great deal of 
attention from various researchers. They include estimating the true number of 
components $k_{0}$, understanding the behaviors of parameter estimation, i.e., the 
atoms and weights of true mixing measure $G_{0}$, and determining the underlying kernel 
density function $f_{0}$ of each subpopulation in the data. The first topic has been an 
intense area of research recently, see for example \citep{Roeder-1994, Escobar-1995, 
Gassiat-1997, Richardson-Green-1997, Gassiat-1999, Keribin-2000, James-2001,Chen-2012, Khalili-Chen-2012, Kasahara-2014}. However, the second and third topic 
have received much less attention due to their mathematical difficulty.

When the kernel density function $f_{0}$ is assumed to be known and $k_{0}$ is bounded 
by some fixed positive integer number, there have been considerable recent advances in the 
understanding of parameter estimation. In particular, when $k_{0}$ is known, i.e., the 
exact-fitted setting of finite mixtures, \cite{Ho-Nguyen-EJS-16} introduced a stronger 
version of classical parameter identifiability condition, which is first order identifiability 
notion, see Definition \ref{definition:strong_order_identifiability} below, to guarantee the 
standard convergence rate $n^{-1/2}$ of parameter estimation. When $k_{0}$ is 
unknown and bounded by a given number, i.e., the over-fitted setting of finite mixtures, 
\cite{Chen-95, Nguyen-13, Ho-Nguyen-EJS-16} utilized a notion of second order 
identifiability to establish convergence rate $n^{-1/4}$ of parameter 
estimation, which is achieved under some minimum distance based
estimator and the maximum likelihood estimator. Sharp minimax rates of parameter estimation for finite mixtures under strong identifiability
conditions in sufficiently high orders have been obtained by \cite{Jonas-2016}.
On the other hand, \cite{Ho-Nguyen-Ann-16, Ho-Nguyen-Ann-17} studied the 
singularity structure of finite mixture's parameter space and its impact on 
rates of parameter estimation when either the first or the second order 
identifiability condition fails to hold.  When the kernel density function $f_{0}$ is unknown, 
there have been some work utilizing the semiparametric approaches \citep{Bordes-2006, Hunter-2007}.  The salient feature of these work is to estimate $f_{0}$ from certain 
classes of functions with infinite dimension and achieve parameter estimation accordingly. 
However, it is usually very difficult to establish a strong guarantee for the identifiability of 
the parameters, even when the parameter space is simple \citep{Hunter-2007}. Therefore,  
semiparametric approaches for estimating true mixing measure $G_{0}$ are usually not 
reliable.

Perhaps, the most common approach to avoid the identifiability issue of $f_{0}$ is to choose some kernel function $f$ that we tactically believe the data are generated from, and utilize
that kernel function to fit the model to obtain an estimate of the true mixing measure $G_{0}$. In view of
its simplicity and prevalence, this is also the approach that we consider in this paper.
However, there is a fundamental challenge with that approach. It is likely that we are subject to a misspecified kernel setting, i.e., the chosen kernel $f$ and the true kernel $f_{0}$ are different. Hence, parameter estimation
under this approach will be potentially unstable. The robustness question is unavoidable. Our
principal goal in the paper therefore, is the construction of robust estimators of $G_{0}$
where the estimation of both its number of components and its parameters
is of interest. Moreover, these estimators should achieve the best possible convergence
rates under various assumptions of both the chosen kernel $f$ and the true kernel $f_{0}$. When the true number of
components $k_{0}$ is known, various robust methods had been proposed in the literature,
see for example \citep{Woodward-1984, Donoho-1988, Cutler-1996}. However, there are
scarce work for robust estimators when the true number of components $k_{0}$ is
unknown. Recently, \cite{Mijawoo-2006} proposed a robust estimator of the number of
components based on the idea of minimum Hellinger distance estimator \citep{Beran-1977, Lindsay-1994, He-2006, Karunamuni-2009}.
However, their work faced certain limitations. Firstly, their estimator greatly relied upon the
choice of kernel bandwidth. In particular, in order to achieve the consistency of the number of
components under the well-specified kernel setting, i.e., when $\left\{f\right\} = \left\{f_{0}\right\}$, the
bandwidth should vanish to 0 sufficiently slowly (cf. Theorem 3.1 in \citep{Mijawoo-2006}).
Secondly, the behaviors of parameter estimation from their estimator are difficult to interpret
due to the subtle choice of bandwidth. Last but not least, they also argued that their
method achieved the robust estimation of the number of components under the misspecified
kernel setting, i.e., when $\left\{f\right\} \neq \left\{f_{0}\right\}$. Not only did their statement lack
theoretical guarantee, their argument turned out to be also erroneous (see Section 5.3 in
\citep{Mijawoo-2006}). More specifically, they considered the chosen kernel $f$ to be Gaussian kernel while
the true kernel $f_{0}$ to be Student's t-kernel with a given fixed degree of freedom. The parameter space $
\Theta$ only consists of mean and scale parameter while the true number of components $k_{0}$ is $2$.
They demonstrated that their estimator still maintained the correct number of components
of $G_{0}$, i.e., $k_{0}=2$, under that setting of $f$ and $f_{0}$. Unfortunately, their
argument is not clear as their estimator should maintain the number of components of a
mixing measure $G_{*}$ which minimizes the appropriate Hellinger distance to the true
model. Of course, establishing the consistency of their parameter estimation under the
misspecified kernel setting is also a non-trivial problem.

Inspired by the idea of minimum Hellinger distance estimator, we propose flexible and 
efficient robust estimators of mixing measure $G_{0}$ that address all the limitations from 
the estimator in \citep{Mijawoo-2006}. Not only our estimators are computationally 
feasible and robust but they also possess various desirable properties, such as the flexible 
choice of bandwidth, the consistency of the number of components, and the best possible 
convergence rates of parameter estimation. In particular, the main contributions in this 
paper can be summarized as follows
\begin{itemize}
\item[(i)] We treat the well-specified kernel setting, i.e., $\left\{f\right\}=\left\{f_{0}
\right\}$, and the misspecified kernel setting, i.e., $\left\{f\right\} \neq \left\{f_{0}\right\}$, separately. Under both settings, we achieve the consistency of our estimators 
regarding the true number of components for any fixed bandwidth. Furthermore, when the 
bandwidth vanishes to 0 at an appropriate rate, the consistency of estimating the true 
number of components is also guaranteed.
\item[(ii)] For any fixed bandwidth, when $f_{0}$ is identifiable in the first order, the 
optimal convergence rate $n^{-1/2}$ of parameter estimation is established under the 
well-specified kernel setting. Additionally, when $f_{0}$ is not identifiable in the first order, 
we also demonstrate that our estimators still achieve the best possible convergence rates 
of parameter estimation.
\item[(iii)] Under the misspecified kernel setting, we prove that our estimators converge to 
a mixing measure $G_{*}$ that is closest to the true model under the Hellinger metric for 
any fixed bandwidth. When $f$ is first order identifiable and $G_{*}$ has finite number of 
components, the optimal convergence rate $n^{-1/2}$ is also established under mild 
conditions of both kernels $f$ and $f_{0}$. Furthermore, when $G_{*}$ has infinite 
number of components, some analyses about the consistency of our estimators are also 
discussed.
\end{itemize}
Finally, our argument, so far, has mostly focused on the setting when the true mixing 
measure $G_{0}$ is fixed with the sample size $n$. However, we note in passing that in a 
proper asymptotic model, $G_{0}$ may also vary with $n$ and converge to some 
probability distribution in the limit. Under the well-specified kernel setting, we verify that 
our estimators also achieve the minimax convergence rate of estimating $G_{0}$ under 
sufficiently strong condition on the identifiability of kernel density function $f_{0}$.
\paragraph{Paper organization} The rest of the paper is organized as follows. Section 
\ref{Section:prelim} provides preliminary backgrounds and facts. Section 
\ref{Section:minimum_Hellinger} presents an algorithm to construct a robust estimator of 
mixing measure based on minimum Hellinger distance estimator idea and model selection 
perspective. Theoretical results regarding that estimator are treated separately under both 
the well- and mis-specified kernel setting. Section \ref{Section:Another_approach} 
introduces another algorithm to construct a robust estimator of mixing measure based on 
superefficiency idea. Section \ref{Section:minimum_Hellinger_singular} addresses the 
performance of our estimators developed in previous sections under non-standard setting 
of our models. The theoretical results are illustrated via careful simulation studies with both 
synthetic and real data in Section \ref{Section:simulation}. Discussions regarding possible 
future work are presented in Section \ref{Section:discussion} while self-contained proofs of 
key results are given in Section \ref{Section:proof} and proofs of the remaining results are given in the Appendices.
 \paragraph{Notation} Given two densities $p, q$ (with respect to the Lebesgue measure $\mu$), the total variation distance is given by $V(p,q)={\displaystyle \dfrac{1}{2}\int {|p(x)-q(x)|}d\mu(x)}$. Additionally, the square Hellinger distance is given by $h^{2}
(p,q)={\displaystyle \dfrac{1}{2}\int {(\sqrt{p(x)}-\sqrt{q(x)})^{2}}d\mu(x)}$.

For any $\kappa=(\kappa_{1},\ldots,\kappa_{d_{1}}) \in \mathbb{N}^{d_{1}}$, we 
denote $\dfrac{\partial^{|\kappa|}{f}}{\partial{\theta^{\kappa}}}(x|\theta) =
\dfrac{\partial^{|\kappa|}{f}}{\partial{\theta_{1}^{\kappa_{1}}\ldots
\partial{\theta_{d_{1}}^{\kappa_{d_{1}}}}}}(x|\theta)$ where $\theta=(\theta_{1},
\ldots,\theta_{d_{1}})$. Additionally, the expression $a_{n} \gtrsim b_{n}$ will be used 
to denote the inequality up to a constant multiple where the value of the constant is 
independent of $n$. We also denote $a_{n} \asymp b_{n}$ if both $a_{n} \gtrsim b_{n}$ 
and $a_{n} \lesssim b_{n}$ hold. Finally, for any $a, b \in \mathbb{R}$, we denote $a 
\vee b = \max \left\{a,b\right\}$ and $a \wedge b = \min \left\{a,b\right\}$.

\section{Background}
\label{Section:prelim}
Throughout the paper, we assume that the parameter space $\Theta$ is a compact subset of $
\mathbb{R}^{d_{1}}$. For any kernel density function $f$ and mixing measure $G$, we define
\begin{eqnarray}
p_{G,f}(x) := \int {f(x|\theta)}dG(\theta). \nonumber
\end{eqnarray}
Additionally, we denote $\mathcal{E}_{k_{0}}: = \mathcal{E}_{k_{0}}(\Theta)$ the space of discrete mixing measures with exactly $k_{0}$ distinct support points on $
\Theta$ and $\mathcal{O}_{k} := \mathcal{O}_{k}(\Theta)$ the space of discrete mixing measures with at most $k$ distinct
support points on $\Theta$. Additionally, denote $\mathcal{G}:=\mathcal{G}(\Theta) =
\mathop {\cup } \limits_{k \in \mathbb{N}_{+}}{\mathcal{E}_{k}}$ the set of all discrete
measures with finite supports on $\Theta$. Finally, $\overline{\mathcal{G}}$ denotes the space of all discrete measures (including those with countably infinite supports) on $\Theta$.

As described in the introduction, a principal goal of our paper is to construct robust estimators that maintain the consistency of the number of components and the best possible convergence rates of parameter estimation. As in~\cite{Nguyen-13}, our tool-kit for analyzing the identifiability and convergence
of parameter estimation in mixture models is based on Wasserstein distance, which
can be defined as the optimal cost of moving masses transforming one probability
measure to another~\citep{Villani-09}. In particular, consider a mixing measure $G=\mathop {\sum }\limits_{i=1}^{k}{p_{i}\delta_{\theta_{i}}}$, where $\textbf{p}=(p_{1},p_{2},\ldots,p_{k})$ denotes the proportion vector.
Likewise, let
$G' = \sum_{i=1}^{k'}p'_i \delta_{\theta'_i}$. A coupling between $\vec{p}$ and
$\vec{p'}$ is a joint distribution $\vec{q}$ on $[1\ldots,k]\times [1,\ldots, k']$, which
is expressed as a matrix
$\vec{q}=(q_{ij})_{1 \leq i \leq k,1\leq j \leq k'} \in [0,1]^{k \times k'}$
with margins
$\mathop {\sum }\limits_{m=1}^{k}{q_{mj}}=p_{j}'$ and $\mathop {\sum  }\limits_{m=1}^{k'}{q_{im}}=p_{i}$ for any $i=1,2,\ldots,k$ and $j=1,2,\ldots,k'$.
We 
use $\mathcal{Q}(\vec{p},\vec{p'})$ to denote the space of all such couplings. For any $r \geq 1$, the $r$-th order Wasserstein distance between
$G$ and $G'$ is given by
\begin{eqnarray}
W_{r}(G,G') & = &
 \inf_{\vec{q} \in \mathcal{Q}(\vec{p},\vec{p'})}\biggr ({\mathop {\sum }\limits_{i,j}{q_{ij}(\|\theta_{i}-\theta_{j}'\|)^{r}}}
\biggr )^{1/r}, \nonumber
\end{eqnarray}
where $\|\cdot\|$  denotes the $l_{2}$ norm for
elements in $\mathbb{R}^{d_{1}}$. It is simple to argue that if a sequence of probability measures $G_{n} \in \Ocal_{k_{0}}$
converges to $G_{0} \in \mathcal{E}_{k_{0}}$ under the $W_{r}$ metric
at a rate $\omega_{n} = o(1)$ then there exists a subsequence of $G_{n}$ such that the set of
atoms of $G_{n}$ converges to the $k_{0}$ atoms of $G_{0}$, up to a permutation of the atoms, at the same rate $\omega_{n}$.

We recall now the following key definitions that are used to analyze the behavior of mixing measures in finite mixture models (cf. \citep{Ho-Nguyen-Ann-17,Jonas-2016}). We start with
\begin{definition} \label{definition:uniform_Lipschitz_condition}
We say the family of densities $\left\{f(x|\theta), \theta \in \Theta \right\}$ is \emph{uniformly Lipschitz} up to
the order $r$, for some $r \geq 1$, if $f$ as a function of $\theta$ is differentiable up to the order $r$ and its partial derivatives with respect to $\theta$
satisfy the following inequality
\begin{eqnarray}
\sum_{|\kappa| = r} \biggr | \bigg (\frac{\partial^{|\kappa|} f}{\partial \theta^\kappa}
(x|\theta_1) -
 \frac{\partial^{|\kappa|} f}{\partial \theta^\kappa}(x|\theta_2) \biggr ) \gamma^\kappa\biggr |
\leq C \|\theta_1-\theta_2\|_r^\delta \|\gamma\|_r^r \nonumber
\end{eqnarray}
for any $\gamma \in \mathbb{R}^{d_{1}}$ and for some positive constants $\delta$ and $C$ independent of $x$ and $\theta_{1},\theta_{2} \in \Theta$. Here,  $\gamma^\kappa=\prod \limits_{i=1}^{d_{1}}{\gamma_{i}^{\kappa_{i}}}$ where $\kappa=(\kappa_{1},\ldots,\kappa_{d_{1}})$.
\end{definition}
We can verify that many popular classes of density functions, including Gaussian, Student's t, and skewnormal family, satisfy the uniform Lipschitz condition up to any order $r \geq 1$.

The classical identifiability condition entails that the family of density function $\left\{f(x|\theta), \theta \in \Theta \right\}$ is identifiable if for any $G_{1}, G_{2} \in \mathcal{G}$, $p_{G_{1},f}(x)=p_{G_{2},f}(x)$ almost surely implies that $G_{1} \equiv G_{2}$ \citep{Teicher-1961}. To be able to establish convergence rates of parameters, we have to utilize the following stronger notion of identifiability:
\begin{definition} \label{definition:strong_order_identifiability}
For any $r \geq 1$, we say that the family $\left\{f(x|\theta), \theta \in \Theta \right\}$ (or in short, $f$) is
\emph{identifiable in the $r$-th order} if $f(x|\theta)$ is differentiable up to the $r$-th order in $\theta$
and the following holds
\begin{itemize}
\item[A1.] For any $k \geq 1$, given $k$ different elements
$\theta_{1}, \ldots, \theta_{k} \in \Theta$.
If we have $\alpha_{\eta}^{(i)}$ such that for almost all $x$
\begin{eqnarray}
\sum \limits_{l=0}^{r}{\sum \limits_{|\eta|=l}{\sum \limits_{i=1}^{k}{\alpha_{\eta}^{(i)}\dfrac{\partial^{|\eta|}{f}}{\partial{\theta^{\eta}}}(x|\theta_{i})}}}=0 \nonumber
\end{eqnarray}
then $\alpha_{\eta}^{(i)}=0$ for all $1 \leq i \leq k$ and $|\eta| \leq r$.
\end{itemize}
\end{definition}
\paragraph{Rationale of the first order identifiability:} Throughout the paper, we denote
$I(G,f) : =E(l_{G}l_{G}^{T})$ the Fisher information matrix of the  kernel density $f$ at
a given mixing measure $G$. Here, $l_{G} : =\dfrac{\partial}{\partial{G}}\log p_{G,f}
(x)$ is the score function, where $\dfrac{\partial}{\partial{G}}$ denotes the derivatives
with respect to all the components and masses of $G$. The first order identifiability of
$f$ is an equivalent way to say that the Fisher information matrix $I(G,f) $ is non-singular for any $G$. Now, under the first order identifiability and the first order uniform
Lipschitz condition on $f$, a careful
investigation of Theorem 3.1 and Corollary 3.1 in \cite{Ho-Nguyen-EJS-16} yields the
following result:
\begin{proposition} \label{proposition:first_order_identifiability}
Suppose that the density family $\left\{f(x|\theta,\theta \in \Theta \right\}$ is
identifiable in the first order and uniformly Lipschitz up to the first order. Then, there is a
positive constant $C_{0}$ depending on $G_{0}$, $\Theta$, and $f$ such that as long as $G \in
\mathcal{O}_{k_{0}}$ we have
\begin{eqnarray}
h(p_{G,f},p_{G_{0},f}) \geq C_{0}W_{1}(G,G_{0}). \nonumber
\end{eqnarray}
\end{proposition}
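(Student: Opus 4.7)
The plan is a standard contradiction argument, reducing the lower bound on $h$ to a linear independence statement enforced by first order identifiability. Since $V(p,q) \leq \sqrt{2}\, h(p,q)$, it suffices to prove the same bound with $V$ in place of $h$, i.e., to find $C_0>0$ with $V(p_{G,f},p_{G_0,f}) \geq C_0\, W_1(G,G_0)$ for every $G \in \mathcal{O}_{k_0}$. Suppose for contradiction that this fails: there exists a sequence $G_n \in \mathcal{O}_{k_0}$ with
\[
\frac{V(p_{G_n,f},p_{G_0,f})}{W_1(G_n,G_0)} \longrightarrow 0.
\]
Because $\Theta$ is compact and every $G_n$ has at most $k_0$ atoms, $\{G_n\}$ is tight and, after passing to a subsequence, $G_n$ converges weakly to some $G^* \in \mathcal{O}_{k_0}$.

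First I would dispose of the easy case $G^* \neq G_0$. Then $W_1(G_n,G_0) \to W_1(G^*,G_0)>0$, while $V(p_{G_n,f},p_{G_0,f}) \to V(p_{G^*,f},p_{G_0,f})$ by the uniform Lipschitz bound on $f$ combined with dominated convergence. Classical identifiability of $f$ (which is implied by first order identifiability, applied with $l=0$) gives $p_{G^*,f} \neq p_{G_0,f}$, hence the limit of the ratio is bounded away from $0$, contradicting our hypothesis.

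The main case is $G^* = G_0$, where $W_1(G_n,G_0) \to 0$. Here I would group the atoms of $G_n = \sum p_i^n \delta_{\theta_i^n}$ via the optimal $W_1$ coupling to the atoms of $G_0 = \sum p_j^0 \delta_{\theta_j^0}$: for each $j \leq k_0$ let $V_j$ collect the indices of $G_n$-atoms transported to $\theta_j^0$. Setting $A_n := W_1(G_n,G_0)$ and applying a first order Taylor expansion of $f(x|\theta)$ around each $\theta_j^0$, I would write
\[
\frac{p_{G_n,f}(x) - p_{G_0,f}(x)}{A_n} = \sum_{j=1}^{k_0} b_j^n\, f(x|\theta_j^0) + \sum_{j=1}^{k_0}\sum_{|\eta|=1} c_{j,\eta}^n\, \frac{\partial f}{\partial \theta^{\eta}}(x|\theta_j^0) + \frac{R_n(x)}{A_n},
\]
with $b_j^n = A_n^{-1}\bigl(\sum_{i \in V_j} p_i^n - p_j^0\bigr)$ and $c_{j,\eta}^n = A_n^{-1}\sum_{i \in V_j} p_i^n (\theta_i^n - \theta_j^0)^{\eta}$. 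The uniform Lipschitz assumption on the first derivatives yields $|R_n(x)|/A_n \to 0$ in $L^1(dx)$, since the remainder is pointwise bounded by a quantity of order $\sum_{i,j} p_i^n \|\theta_i^n - \theta_j^0\|^{1+\delta}$, which is $o(A_n)$ by the optimality of the coupling.

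The final step is to extract a further subsequence along which all the normalized coefficients converge, $b_j^n \to b_j$ and $c_{j,\eta}^n \to c_{j,\eta}$. By construction, $\sum_j |b_j^n| + \sum_{j,\eta} |c_{j,\eta}^n| \geq 1$ (up to a fixed constant depending only on $d_1$), so the limiting coefficients cannot all vanish. Combining the $L^1$ convergence of $(p_{G_n,f}-p_{G_0,f})/A_n \to 0$ with the Lipschitz control on $f$ and $\partial f/\partial \theta^{\eta}$, I pass to the limit and obtain
\[
\sum_{j=1}^{k_0} b_j\, f(x|\theta_j^0) + \sum_{j=1}^{k_0}\sum_{|\eta|=1} c_{j,\eta}\, \frac{\partial f}{\partial \theta^{\eta}}(x|\theta_j^0) = 0 \quad \text{for a.e.\ } x.
\]
First order identifiability of $f$ forces all $b_j$ and $c_{j,\eta}$ to equal zero, contradicting the lower bound on their sum. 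The main obstacle is carefully controlling the Taylor remainder in $L^1$ uniformly in $n$ and verifying that the normalized coefficients indeed have positive lower-bounded total mass; both rely crucially on the uniform Lipschitz hypothesis and on using the optimal $W_1$ coupling to define the grouping $\{V_j\}$.
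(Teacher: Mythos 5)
Your route is essentially the paper's, made self-contained: the paper proves the bound locally by citing the first-order Taylor/linear-independence argument of Theorem 3.1 in Ho and Nguyen (EJS 2016) --- which is what your case $G^{*}=G_{0}$ re-derives --- and then extends it globally by a compactness-plus-identifiability contradiction, which is your case $G^{*}\neq G_{0}$. (In that easy case, note you have no dominating function for dominated convergence; but Fatou's lemma applied to $|p_{G_{n},f}-p_{G_{0},f}|$, exactly as the paper applies it to the Hellinger integrand, already gives $\liminf_{n} V(p_{G_{n},f},p_{G_{0},f}) \geq V(p_{G^{*},f},p_{G_{0},f})>0$, which is all you need.)

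The gaps are in the local case, at the two steps you assert without proof. First, the inequality $\sum_{j}|b_{j}^{n}|+\sum_{j,\eta}|c_{j,\eta}^{n}|\gtrsim 1$ is not ``by construction'': if a group $V_{j}$ contained two atoms of equal mass placed symmetrically about $\theta_{j}^{0}$ (with total mass $p_{j}^{0}$), then both $b_{j}^{n}$ and $c_{j,\eta}^{n}$ from that group vanish while its contribution to $W_{1}(G_{n},G_{0})$ does not --- this is precisely the over-fitted cancellation that degrades the rate to second order. What rescues you is that $G_{n}\in\mathcal{O}_{k_{0}}$ while $G_{0}$ has exactly $k_{0}$ atoms, so once $W_{1}(G_{n},G_{0})$ is small each $\theta_{j}^{0}$ attracts exactly one atom of $G_{n}$ and every $V_{j}$ is a singleton; you must state and use this bijection, since it is the only reason a purely first-order expansion and the claimed coefficient lower bound are valid. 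Second, $\|R_{n}\|_{L^{1}(dx)}/A_{n}\to 0$ does not follow from the uniform Lipschitz condition, which yields only a bound that is uniform (hence constant) in $x$ and need not be integrable over an unbounded $\mathcal{X}$. The repair is the one the paper uses in its analogous arguments: from $\int |p_{G_{n},f}-p_{G_{0},f}|\,dx / A_{n}\to 0$ extract a subsequence along which the normalized difference tends to $0$ for almost every $x$, and use the pointwise-in-$x$ remainder bound $|R_{n}(x)|\leq C\sum_{j}p_{j}^{n}\|\theta_{j}^{n}-\theta_{j}^{0}\|^{1+\delta}=o(A_{n})$ (valid in the singleton regime, where $W_{1}(G_{n},G_{0})$ is comparable to $\sum_{j}p_{j}^{n}\|\theta_{j}^{n}-\theta_{j}^{0}\|+|p_{j}^{n}-p_{j}^{0}|$) to pass to the limit pointwise; first-order identifiability then gives the contradiction as you intend. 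With these two repairs your argument is correct and amounts to a self-contained version of the proof the paper assembles by citation.
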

Note that, the result of Proposition \ref{proposition:first_order_identifiability} is slightly
stronger than that of Theorem 3.1 and Corollary 3.1 in \cite{Ho-Nguyen-EJS-16} as it
holds for any $G \in \mathcal{O}_{k_{0}}$ instead of only for any $G \in \mathcal{E}
_{k_{0}}$ as in these later results. The
first order identifiability
property of kernel density function $f$ implies that any estimation method that yields the
convergence rate $n^{-1/2}$ for $p_{G_{0},f}$ under the Hellinger distance, the induced
rate of convergence for the mixing measure $G_{0}$ is $n^{-1/2}$ under $W_{1}$
distance.

\section{Minimum Hellinger distance estimator with non-singular Fisher information matrix}
\label{Section:minimum_Hellinger}
Throughout this section, we assume that two families of density functions $\left\{f_0(x|\theta),
\theta \in \Theta \right\}$ and  $\left\{f(x|\theta), \right. \\ \left. \theta \in \Theta \right\}$ are
identifiable in the first order and admit the uniform Lipschitz condition up to the first order.
Now, let $K$ be any fixed multivariate density function and $K_{\sigma}(x)=\dfrac{1}
{\sigma^{d}}K\left(\dfrac{x}{\sigma}\right)$ for any $\sigma>0$. We define
\begin{eqnarray}
f*K_{\sigma}(x|\theta) : = \int {f(x-y|\theta)K_{\sigma}(y)}dy \nonumber
\end{eqnarray}
for any $\theta \in \Theta$. The notation $f*K_{\sigma}$ can be thought as the 
convolution of the density family $\left\{f(x|\theta), \theta \in \Theta \right\}$ with the kernel function $K_{\sigma}$. From that definition, we further define
\begin{eqnarray}
p_{G,f}*K_{\sigma}(x) := \sum \limits_{i=1}^{k}{p_{i}f*K_{\sigma}(x|\theta_{i})}=\sum \limits_{i=1}^{k}{p_{i}\int {f(x-y|\theta_{i})K_{\sigma}(y)}dy} \nonumber
\end{eqnarray}
for any discrete mixing measure $G = \sum \limits_{i=1}^{k}{p_{i}\delta_{\theta_{i}}}$ in 
$\overline{\mathcal{G}}$. For the convenience of our argument later, we also denote that 
$p_{G,f}*K_{\sigma} := p_{G,f}$ as long as $\sigma=0$. Now, our approach to define a 
robust estimator of $G_{0}$ is inspired by the minimum Hellinger distance estimator 
\citep{Beran-1977} and the model selection criteria. Indeed, we have the following algorithm
\paragraph{Algorithm 1:} Let $C_{n}n^{-1/2} \to 0$ and $C_{n}n^{1/2} \to \infty$ as $n \to \infty$.
\begin{itemize}
\item[•] Step 1: Determine $\widehat{G}_{n,m}=\mathop {\arg \min} \limits_{G \in \mathcal{O}_{m}}{h(p_{G,f}*K_{\sigma_{1}},P_{n}*K_{\sigma_{0}})}$ for any $m \geq 1$.
\item[•] Step 2: Choose
\begin{eqnarray}
\widehat{m}_{n}=\mathop {\inf } \biggr\{m \geq 1: h(p_{\widehat{G}_{n,m},f}*K_{\sigma_{1}},P_{n}*K_{\sigma_{0}}) \leq h(p_{\widehat{G}_{n,m+1},f}*K_{\sigma_{1}},P_{n}*K_{\sigma_{0}}) \nonumber \\
+C_{n}n^{-1/2}\biggr\}, \nonumber
\end{eqnarray}

\item[•] Step 3: Let $\widehat{G}_{n}=\widehat{G}_{n,\widehat{m}_{n}}
$ for each $n$.
\end{itemize}
Note that, $\sigma_{1} \geq 0$ and $\sigma_{0}>0$ are two chosen bandwidths that 
control the amount of smoothness that we would like to add to $f$ and $f_{0}$ 
respectively. The choice of $C_{n}$ in Algorithm 1 is to guarantee that $\widehat{m}_{n}
$ is finite. Additionally, it can be chosen based on certain model selection criteria. For 
instance, if we use BIC, then $C_{n} = \sqrt{(d_{1}+1)\text{log} n/2}$ where $d_{1}$ is 
the dimension of parameter space. Algorithm 1 is in fact the generalization of the
algorithm considered in \cite{Mijawoo-2006} when $\sigma_{1}=0$ and $\sigma_{0} > 
0$. In particular, with the adaptation of notations as those in our paper, the algorithm in \cite{Mijawoo-2006} can be stated as follows.
\paragraph{Woo-Sriram (WS) Algorithm:}
\begin{itemize}
\item[•] Step 1: Determine $\overline{G}_{n,m}=\mathop {\arg \min} \limits_{G \in \mathcal{O}_{m}}{h(p_{G,f},P_{n}*K_{\sigma_{0}})}$ for any $n,m \geq 1$.
\item[•] Step 2: Choose
\begin{eqnarray}
\overline{m}_{n}=\mathop {\inf }{\biggr\{m \geq 1: h(p_{\overline{G}_{n,m},f},P_{n}*K_{\sigma_{0}}) \leq h(p_{\overline{G}_{n,m+1},f},P_{n}*K_{\sigma_{0}}) +C_{n}'n^{-1/2}\biggr\}}, \nonumber
\end{eqnarray}
where $C_{n}'n^{-1/2} \to 0$.
\item[•] Step 3: Let $\overline{G}_{n}=\overline{G}_{n,\overline{m}_{n}}
$ for each $n$.
\end{itemize}
The main distinction between our estimator and Woo-Sriram's (WS) estimator is that we also allow the convolution of mixture density $p_{G,f}$ with $K_{\sigma_{1}}$. This double convolution trick in Algorithm 1 was also considered in
\cite{James-2001} to construct the consistent estimation of mixture complexity. However,
their work was based on the Kullback-Leibler (KL) divergence rather than the Hellinger distance and was restricted to only the choice that $\sigma_{1}=\sigma_{0}$. Under the
misspecified kernel setting, i.e., $\left\{f\right\} \neq \left\{f_{0}\right\}$, the estimation of mixing measure $G_{0}$ from KL divergence
can be highly unstable. Additionally, \cite{James-2001} only worked with the Gaussian case of true kernel function $f_{0}$, while in many
applications, it is not realistic to expect that $f_{0}$ is Gaussian. To demonstrate the 
advantages of our proposed estimator $\widehat{G}_{n}$ over WS estimator $
\overline{G}_{n}$, we will provide careful theoretical studies of these estimators in the 
paper. For readers' convenience, we provide now a brief summary of our analyses of the 
convergence behaviors of $\widehat{G}_{n}$ and $\overline{G}_{n}$.

Under the well-specified setting, i.e., $\left\{f\right\}=\left\{f_{0}\right\}$, the optimal 
choice of $\sigma_{1}$ and $\sigma_{0}$ in Algorithm 1 is $\sigma_{1}=\sigma_{0}>0$, 
which guarantees that $G_{0}$ is the exact mixing measure that we seek for. Now, the 
double convolution trick in Algorithm 1 is sufficient to yield
the optimal convergence rate $n^{-1/2}$ of $\widehat{G}_{n}$ to $G_{0}$ for any fixed 
bandwidth $\sigma_{0}>0$ (cf. Theorem 
\ref{theorem:convergence_rate_mixing_measure}). The core idea of this result comes from the fact that $P_{n}*K_{\sigma_{0}}(x)
$ is an unbiased estimator of $p_{G_{0},f_{0}}*K_{\sigma_{0}}(x)$ for all $x \in 
\mathcal{X}$. It guarantees that $h(P_{n}*K_{\sigma_{0}},p_{G_{0},f_{0}}*K_{\sigma_{0}})=O_{p}(n^{-1/2})
$ under suitable conditions of $f_{0}$ when the bandwidth $\sigma_{0}$ is fixed. 
However, it is not the case for WS Algorithm. Indeed, we demonstrate later in Section 
\ref{Section:MS_algorithm} that for any fixed bandwidth $\sigma_{0}>0$, $\overline{G}
_{n}$ converges to $\overline{G}_{0}$ where $\overline{G}_{0}=\mathop {\arg \min} 
\limits_{G \in \overline{\mathcal{G}}}{h(p_{G,f_{0}},p_{G_{0},f_{0}}*K_{\sigma_{0}})}$ 
under certain conditions of $f_{0}$, $K$, and $\overline{G}_{0}$. Unfortunately, $
\overline{G}_{0}$ can be very different from $G_{0}$ even if they may have the same 
number of components. Therefore, even though we may still be able to recover the true 
number of components with WS Algorithm, we hardly can obtain exact estimation of true 
parameters. It shows that Algorithm 1 is more appealing than WS Algorithm under the well-specified kernel setting with fixed bandwidth $\sigma_{0}>0$.

When we allow the bandwidth $\sigma_{0}$ to vanish to 0 as $n \to
\infty$ under the well-specified kernel setting with $\sigma_{1}=\sigma_{0}$, we are able 
to guarantee that $\widehat{m}_{n} \to k_{0}$ almost surely when $n\sigma_{0}^{d} 
\to 0$ (cf. Proposition \ref{proposition:consistency_components_well_specified}). This 
result is also consistent with the result $\overline{m}_{n} \to k_{0}$ almost surely from 
Theorem 1 in \cite{Mijawoo-2006} under the same assumptions of $\sigma_{0}$. 
Moreover, under these conditions of bandwidth $\sigma_{0}$, both the estimators $
\widehat{G}_{n}$ and $\overline{G}_{n}$ converge to $G_{0}$ as $n \to \infty$. 
However, instead of obtaining the exact convergence rate $n^{-1/2}$ of $\widehat{G}
_{n}$ to $G_{0}$, we are only able to achieve its convergence rate to be $n^{-1/2}$ up 
to some logarithmic factor when the bandwidth $\sigma_{0}$ vanishes to 0 sufficiently 
slowly. It is mainly due to the fact that our current technique is based on the evaluation of 
the term $h(P_{n}*K_{\sigma_{0}},p_{G_{0},f_{0}}*K_{\sigma_{0}})$, which may not 
converge to 0 at the exact rate $n^{-1/2}$ when $\sigma_{0} \to 0$. The situation is 
even worse for the convergence rate of $\overline{G}_{n}$ to $G_{0}$ as it relies not only 
on the evaluation of $h(P_{n}*K_{\sigma_{0}},p_{G_{0},f_{0}}*K_{\sigma_{0}})$ but 
also on the convergence rate of $\overline{G}_{0}$ to $G_{0}$, which depends strictly on 
the vanishing rate of $\sigma_{0}$ to 0. Therefore, the convergence rate of $\overline{G}
_{n}$ in WS Algorithm may be much slower than $n^{-1/2}$. As a consequence, our 
estimator in Algorithm 1 may be also more efficient than that in WS Algorithm when the 
bandwidth $\sigma_{0}$ is allowed to vanish to 0.

Under the misspecified kernel setting, i.e., $\left\{f\right\} \neq \left\{f_{0}\right\}$, the 
double convolution technique in Algorithm 1 continues to be useful for studying the 
convergence rate of $\widehat{G}_{n}$ to $G_{*}$ where $G_{*}=\mathop {\arg \min}
\limits_{G \in \overline{\mathcal{G}}}{h(p_{G,f}*K_{\sigma_{1}},p_{G_{0},f_{0}}
*K_{\sigma_{0}})}$. Unlike the well-specified kernel setting, we allow $\sigma_{1}$ and $
\sigma_{0}$ to be different under the misspecified kernel setting. It is particularly useful if 
we can choose $\sigma_{1}$ and $\sigma_{0}$ such that two families $\left\{f*K_{\sigma_{1}}\right\}$ and 
$\left\{f_{0}*K_{\sigma_{0}}\right\}$ are identical under Hellinger distance. The consequence is that 
$G_{*}$ and $G_{0}$ will be identical under Wasserstein distance, which means that our 
estimator is still able to recover true mixing measure even though we choose the wrong 
kernel to fit our data. Granted, the misspecified setting \emph{means} that we are usually not in
such a fortunate situation, but our theory entails a good performance for our estimate when $f*K_{\sigma_1} \approx
f_0*K_{\sigma_{0}}$.
Now, for the general choice of $\sigma_{1}$ and $\sigma_{0}$, as 
long as $G_{*}$ has finite number of components, we are able to establish the 
convergence rate $n^{-1/2}$ of $\widehat{G}_{n}$ to $G_{*}$ under sufficient 
conditions on $f,f_{0}$, and $K$ (cf. Theorem 
\ref{theorem:convergence_rate_mixing_measure_misspecified_strong}). However, when 
the number of components of $G_{*}$ is infinite, we are only able to achieve the 
consistency of the number of components of $\widehat{G}_{n}$ (cf. Proposition 
\ref{proposition:infinite_mixture_complexity_misspecified_kernel}). Even though we do 
not have specific result regarding the convergence rate of $\widehat{G}_{n}$ to $G_{*}$ 
under that setting of $G_{*}$, we also provide important insights regarding that 
convergence in Section \ref{Section:infinite_component}.


\subsection{Well-specified kernel setting} \label{Section:well_specified_kernel}
In this section, we consider the setting that $f_{0}$ is known, i.e., $\{f\}=\{f_{0}\}$. Under that setting, the optimal choice of $\sigma_{1}$ and $\sigma_{0}$ is $\sigma_{1}=\sigma_{0}>0$ to guarantee that $G_{0}$ is the exact mixing measure that we estimate.
As we have seen from the discussion in Section \ref{Section:prelim}, the first order
identifiability condition plays an important
role to obtain the convergence rate $n^{-1/2}$ of parameter estimation. Since
Algorithm 1 relies on investigating the variation around kernel function $f_{0}
*K_{\sigma_{0}}$ in the limit, we
would like to guarantee that $f_{0}*K_{\sigma_{0}}$ is identifiable in the first order for any
$\sigma_{0}>0$. It appears that we have a mild
condition of $K$ such that the first order identifiability of $f_{0}*K_{\sigma_{0}}$ is
maintained.
\begin{lemma} \label{lemma:first_order_convolution}
Assume that $\widehat{K}(t) \neq 0$ for almost all $t \in \mathbb{R}^{d}$ where $
\widehat{K}(t)$ is the Fourier transform of kernel function $K$. Then, as long as $f_{0}$ is
identifiable in the first order, we obtain that $f_{0}*K_{\sigma_{0}}$ is identifiable in the first
order for any $\sigma_{0}>0$.
\end{lemma}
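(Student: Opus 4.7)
The plan is to reduce the first-order identifiability of $f_0 * K_\sigma$ to that of $f_0$ by a Fourier-transform argument, exploiting that convolution and differentiation in $\theta$ commute and that $\widehat{K_\sigma}$ is non-vanishing almost everywhere.

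Fix $\sigma_0 > 0$, a positive integer $k$, distinct $\theta_1,\ldots,\theta_k \in \Theta$, and suppose there are coefficients $\{\alpha_\eta^{(i)} : 1\le i \le k,\ |\eta|\le 1\}$ for which
\begin{equation*}
\sum_{l=0}^{1}\sum_{|\eta|=l}\sum_{i=1}^{k} \alpha_\eta^{(i)}\,\frac{\partial^{|\eta|}(f_0 * K_{\sigma_0})}{\partial \theta^\eta}(x\,|\,\theta_i)=0 \qquad \text{for a.e. }x.
\end{equation*}
The first step is to move the derivative inside the convolution: under the first-order uniform Lipschitz assumption on $f_0$, a standard dominated-convergence argument yields
\begin{equation*}
\frac{\partial^{|\eta|}(f_0 * K_{\sigma_0})}{\partial \theta^\eta}(x\,|\,\theta_i)=\left(\frac{\partial^{|\eta|} f_0}{\partial \theta^\eta}(\cdot\,|\,\theta_i) * K_{\sigma_0}\right)(x).
\end{equation*}
Defining the auxiliary function
\begin{equation*}
g(x) := \sum_{l=0}^{1}\sum_{|\eta|=l}\sum_{i=1}^{k}\alpha_\eta^{(i)}\,\frac{\partial^{|\eta|} f_0}{\partial \theta^\eta}(x\,|\,\theta_i),
\end{equation*}
the identity above rewrites as $g * K_{\sigma_0} \equiv 0$ a.e.

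Next I would take the Fourier transform in the $x$ variable. The convolution theorem gives $\widehat g(t)\,\widehat{K_{\sigma_0}}(t)=0$ for almost every $t\in\mathbb{R}^d$. Because $\widehat{K_{\sigma_0}}(t)=\widehat K(\sigma_0 t)$ and $\{t:\widehat K(t)=0\}$ has Lebesgue measure zero by hypothesis, its dilation $\{t:\widehat K(\sigma_0 t)=0\}=\sigma_0^{-1}\{t:\widehat K(t)=0\}$ is also Lebesgue-null. Thus $\widehat g(t)=0$ for a.e.\ $t$, and by Fourier uniqueness $g(x)=0$ for a.e.\ $x$. Applying the first-order identifiability of $f_0$ to this vanishing linear combination then forces every $\alpha_\eta^{(i)}=0$, which is exactly the first-order identifiability of $f_0 * K_{\sigma_0}$.

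The only technical obstacle is making the Fourier machinery rigorous: $g$ is a finite combination of a density and its first-order partials, so $f_0(\cdot\,|\,\theta_i)\in L^1$ automatically, but the partial derivatives need not be integrable. In practice this is handled either by noting that the uniform Lipschitz condition combined with compactness of $\Theta$ provides the integrability needed so that $g\in L^1$ (whence the convolution theorem applies directly), or, more generally, by interpreting the relation $g * K_{\sigma_0}=0$ in the sense of tempered distributions and using that multiplication by the a.e.-nonzero function $\widehat K(\sigma_0 t)$ remains injective on $\mathcal{S}'$ restricted to a suitable class. Either route preserves the core logic; the rest is the bookkeeping of the identifiability definition.
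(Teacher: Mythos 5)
Your proposal is correct and follows essentially the same route as the paper: take the Fourier transform in $x$, factor out $\widehat{K_{\sigma_{0}}}(t)=\widehat{K}(\sigma_{0}t)$, use that it is nonzero almost everywhere together with Fourier uniqueness to reduce to a vanishing linear combination of $f_{0}$ and its first-order partials, and then invoke the first-order identifiability of $f_{0}$. Your added remarks on interchanging $\partial/\partial\theta$ with the convolution and on the integrability (or tempered-distribution) issues only make explicit steps the paper leaves implicit.
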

The assumption $\widehat{K}(t) \neq 0$ is very mild. Indeed, popular choices of $K$ to 
satisfy that assumption include the Gaussian and Student's t kernel. Inspired by the result 
of Lemma \ref{lemma:first_order_convolution}, we have the following result establishing 
the convergence rate of $\widehat{G}_{n}$ to $G_{0}$ under $W_{1}$ distance for any fixed bandwidth $\sigma_{0}>0$.
\begin{theorem} \label{theorem:convergence_rate_mixing_measure} Let $\sigma_{0}>0$ be given.
\begin{itemize}
\item[(i)] If $f_{0}*K_{\sigma_{0}}$ is identifiable, then $\widehat{m}_{n} \to k_{0}$ almost surely.
\item[(ii)] Assume further the following conditions
\begin{itemize}
\item[(P.1)] The kernel function $K$ is chosen such that $f_{0} * K_{\sigma_{0}}$ is
also identifiable in the first order and admits a uniform Lipschitz property up to the first
order.
\item[(P.2)] $\Psi(G_{0},\sigma_{0}) :={\displaystyle \int{\dfrac{g(x|G_{0},\sigma_{0})}
{p_{G_{0},f_{0}}*K_{\sigma_{0}}(x)}}\textrm{d}x < \infty}$ where we have that \\ $g(x|G_{0},\sigma_{0}) := {\displaystyle \int{K_{\sigma_{0}}^{2}(x-y)p_{G_{0},f_{0}}(y)}dy}$.
\end{itemize}
Then, we obtain
\begin{eqnarray}
W_{1}(\widehat{G}_{n},G_{0})=O_{p}\biggr(\sqrt{\dfrac{\Psi(G_{0},\sigma_{0})}{C_{1}^{2}(\sigma_{0})
}}n^{-1/2}\biggr) \nonumber
\end{eqnarray}
where $C_{1}(\sigma_{0}) :=\inf \limits_{G \in \mathcal{O}_{k_{0}}}
{\dfrac{h(p_{G,f_{0}}*K_{\sigma_{0}},p_{G_{0},f_{0}}*K_{\sigma_{0}})} {W_{1}(G,G_{0})}}$.
\end{itemize}
\end{theorem}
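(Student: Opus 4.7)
The plan is to execute the standard minimum-distance argument based on two ingredients: (a) the smoothed empirical $P_n \ast K_{\sigma_0}$ concentrates on $p_{G_0,f_0} \ast K_{\sigma_0}$ at rate $n^{-1/2}$, and (b) the smoothed kernel $f_0 \ast K_{\sigma_0}$ is first-order identifiable so that Proposition \ref{proposition:first_order_identifiability} delivers a Hellinger-to-Wasserstein transfer. For part (i), I would first establish (a). Since $P_n \ast K_{\sigma_0}(x) = n^{-1}\sum_i K_{\sigma_0}(x-X_i)$ is unbiased for $p_{G_0,f_0} \ast K_{\sigma_0}(x)$, its pointwise variance is bounded by $n^{-1} g(x \mid G_0, \sigma_0)$; the inequality $(\sqrt{p}-\sqrt{q})^2 \leq (p-q)^2/q$ together with Fubini's theorem converts this into $E[h^2(P_n \ast K_{\sigma_0}, p_{G_0,f_0}\ast K_{\sigma_0})] \leq \Psi(G_0,\sigma_0)/(2n)$, whence Markov gives (a). Since $f = f_0$ and $\sigma_1 = \sigma_0$ in the well-specified setting, feasibility of $G_0$ in $\mathcal{O}_m$ for $m \geq k_0$ combined with (a) yields $h(p_{\widehat{G}_{n,m},f_0} \ast K_{\sigma_0}, P_n \ast K_{\sigma_0}) = O_p(n^{-1/2})$, while for $m < k_0$, identifiability of $f_0 \ast K_{\sigma_0}$ and compactness of $\mathcal{O}_m$ give $\delta_m := \inf_{G \in \mathcal{O}_m} h(p_{G,f_0}\ast K_{\sigma_0}, p_{G_0,f_0}\ast K_{\sigma_0}) > 0$, and a triangle argument produces $h(p_{\widehat{G}_{n,m},f_0}\ast K_{\sigma_0}, P_n \ast K_{\sigma_0}) \to \delta_m$ almost surely. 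At $m = k_0$ the stopping gap is $O_p(n^{-1/2})$ and hence eventually below $C_n n^{-1/2}$ since $C_n \sqrt{n} \to \infty$, giving $\widehat{m}_n \leq k_0$; at each $m < k_0$ the gap tends to $\delta_m - \delta_{m+1}$, and provided this is strictly positive it dominates $C_n n^{-1/2} \to 0$, yielding $\widehat{m}_n \geq k_0$ and hence $\widehat{m}_n \to k_0$ almost surely.

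For part (ii), Lemma \ref{lemma:first_order_convolution} and (P.1) make $f_0 \ast K_{\sigma_0}$ first-order identifiable and uniformly Lipschitz to first order, so Proposition \ref{proposition:first_order_identifiability} applied to this kernel ensures $C_1(\sigma_0) > 0$. By part (i), $\widehat{G}_n \in \mathcal{O}_{k_0}$ almost surely for large $n$; on this event the definition of $C_1(\sigma_0)$ and the triangle inequality combined with the optimality of $\widehat{G}_n$ (using $G_0$ as a feasible comparator) give
\[
C_1(\sigma_0)\, W_1(\widehat{G}_n, G_0) \leq h(p_{\widehat{G}_n,f_0}\ast K_{\sigma_0}, p_{G_0,f_0}\ast K_{\sigma_0}) \leq 2\, h(P_n \ast K_{\sigma_0}, p_{G_0,f_0}\ast K_{\sigma_0}),
\]
and plugging in the rate from (a) yields the stated $O_p\bigl(\sqrt{\Psi(G_0,\sigma_0)/C_1^2(\sigma_0)}\, n^{-1/2}\bigr)$ bound.

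The main obstacle will be justifying the strict monotonicity $\delta_m > \delta_{m+1}$ for every $m < k_0$ that is required to exclude premature stopping in part (i); identifiability alone only yields $\delta_m > 0$ and $\delta_m \geq \delta_{m+1}$. The resolution is to argue that if an $m$-component optimizer $G^\ast$ also attained the $(m+1)$-component infimum, then introducing a new atom via a first-order perturbation and invoking the linear independence of the score functions afforded by the non-singular Fisher information at $G^\ast$ would strictly decrease $h^2$, giving a contradiction. Secondary technicalities---existence of the minimizers $\widehat{G}_{n,m}$ and the uniform convergence needed to pass to the limit $h(\widehat{G}_{n,m}) \to \delta_m$---follow from compactness of $\Theta$ together with continuity of the Hellinger functional in $G$.
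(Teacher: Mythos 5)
Your proposal is correct and follows essentially the same route as the paper: the same three ingredients appear in the same roles --- the $O_p\bigl(\sqrt{\Psi(G_0,\sigma_0)/n}\bigr)$ control of $h(P_n*K_{\sigma_0},p_{G_0,f_0}*K_{\sigma_0})$ via the pointwise variance of the unbiased smoothed empirical density (your exact inequality $(\sqrt{p}-\sqrt{q})^2\leq (p-q)^2/q$ is in fact cleaner than the paper's first-order Taylor ``$\simeq$''), the Leroux-style gap argument for $\widehat{m}_n\to k_0$, and the final transfer $C_1(\sigma_0)W_1(\widehat{G}_n,G_0)\leq 2\,h(P_n*K_{\sigma_0},p_{G_0,f_0}*K_{\sigma_0})$ on the event $\{\widehat{m}_n=k_0\}$. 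The one place you diverge is exactly the crux you isolate, namely $\delta_m>\delta_{m+1}$ for $m<k_0$: the paper closes the same one-atom perturbation $(1-\epsilon)G_{0,m}+\epsilon\delta_{\theta}$ not through non-singularity of the Fisher information at the $m$-component optimizer --- which is not available under the hypotheses of part (i), where only identifiability of $f_0*K_{\sigma_0}$ is assumed --- but by taking the first-order optimality condition in $\epsilon$, choosing $\theta=\theta_i^0$, summing with weights $p_i^0$, and invoking the elementary inequality $\int q_1^{3/2}q_2^{-1/2}\,d\mu\geq 1$, which forces $h(p_{G_{0,m},f_0}*K_{\sigma_0},p_{G_0,f_0}*K_{\sigma_0})=0$ and hence contradicts $m<k_0$ using identifiability alone. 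If you adopt that closing step in place of your Fisher-information argument, your outline matches the paper's proof in full.
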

\paragraph{Remarks:}
\begin{itemize}
\item[(i)] Condition (P.1) is satisfied by many kernel functions $K$ according to Lemma \ref{lemma:first_order_convolution}. By assumption (P.1) and Proposition \ref{proposition:first_order_identifiability}, we obtain the following bound
\begin{eqnarray}
h(p_{G,f_{0}}*K_{\sigma_{0}},p_{G_{0},f_{0}}*K_{\sigma_{0}}) \gtrsim W_{1}(G,G_{0}) \label{eqn:comment_convergence_rate_wellspecify} \nonumber
\end{eqnarray}
for any $G \in \mathcal{O}_{k_{0}}$, i.e., $C_{1}(\sigma_{0})>0$.
\item[(ii)] Condition (P.2) is mild. One easy example for such setting is when $f_{0}$ and
$K$ are both Gaussian kernels. In fact, when $\left\{f_{0}(x|\eta,\tau), (\eta,\tau) \in 
\Theta \right\}$ is a family of univariate Gaussian distributions where $\eta$ and $\tau$ are location and scale parameter respectively and $K$ is a standard univariate
Gaussian kernel, we achieve
\begin{eqnarray}
\Psi(G_{0},\sigma_{0}) & = & \sum \limits_{i=1}^{k_{0}}{\int {\dfrac{p_{i}^{0}\int {K_{\sigma_{0}}^{2}(x-y)f_{0}(y|\eta_{i}^{0},\tau_{i}^{0})}dy}{p_{G_{0},f_{0}}*K_{\sigma_{0}}(x)}}}\textrm{d}x \nonumber \\
& < & \sum \limits_{i=1}^{k_{0}}{\int {\dfrac{\int {K_{\sigma_{0}}^{2}(x-y)f_{0}(y|\eta_{i}^{0},\tau_{i}^{0})}dy}{f_{0}*K_{\sigma_{0}}(x|\eta_{i}^{0},\tau_{i}^{0})}}}\textrm{d}x \nonumber \\
& \propto & \sum \limits_{i=1}^{k_{0}}{\left((\tau_{i}^{0})^{2}+\sigma_{0}^{2}\right)/\sigma_{0}^{2}} < \infty. \nonumber
\end{eqnarray}
Another specific example is when $f_{0}$ and $K$ are both Cauchy kernels or generally 
Student's t kernels with odd degree of freedom. However, assumption (P.2) may fail when 
$K$ has much shorter tails than $f_0$. For example, if $f_{0}$ is Laplacian kernel and $K$ 
is Gaussian kernel, then $\Psi(G_{0},\sigma_{0}) = \infty$.
\end{itemize}
\paragraph{Comments on $\widehat{G}_{n}$ as $\sigma_{0} \to 0$:} To avoid the 
ambiguity, we now denote $\left\{\sigma_{0,n}\right\}$ as the sequence of varied 
bandwidths $\sigma_{0}$. The following result shows the consistency of $\widehat{m}
_{n}$ under specific conditions on $\sigma_{0,n} \to 0$.
\begin{proposition}\label{proposition:consistency_components_well_specified}
Given a sequence of bandwidths $\left\{\sigma_{0,n}\right\}$ such that $\sigma_{0,n} \to 0$ and $n\sigma_{0,n}^{d} \to \infty$ as $n \to \infty$. If $f_{0}$ is identifiable, then $\widehat{m}_{n} \to k_{0}$ almost surely.
\end{proposition}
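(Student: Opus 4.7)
The plan is to show $\widehat m_n \to k_0$ almost surely by establishing both $\widehat m_n \geq k_0$ and $\widehat m_n \leq k_0$ eventually. Throughout, abbreviate $h_m^{(n)} := h(p_{\widehat G_{n,m}, f_0} * K_{\sigma_{0,n}}, P_n * K_{\sigma_{0,n}})$ and the population counterpart $h_m^{*,(n)} := \inf_{G \in \mathcal{O}_m} h(p_{G, f_0} * K_{\sigma_{0,n}}, p_{G_0, f_0} * K_{\sigma_{0,n}})$. The foundation is strong $L^1$-consistency of the convolution estimator: under $\sigma_{0,n} \to 0$ and $n\sigma_{0,n}^d \to \infty$, the standard Devroye-type theorem gives $V(P_n * K_{\sigma_{0,n}}, p_{G_0,f_0} * K_{\sigma_{0,n}}) \to 0$ a.s., hence $\epsilon_n := h(P_n * K_{\sigma_{0,n}}, p_{G_0,f_0} * K_{\sigma_{0,n}}) \to 0$ a.s.\ via $h^2 \leq V$. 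The usual triangle inequality applied at the Hellinger minimizer then gives $|h_m^{(n)} - h_m^{*,(n)}| \leq \epsilon_n$ for every $m$, so the asymptotic behavior of $h_m^{(n)}$ mirrors that of $h_m^{*,(n)}$.

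For the lower bound $\widehat m_n \geq k_0$, I would show that for each $m < k_0$, $\liminf_n (h_m^{*,(n)} - h_{m+1}^{*,(n)}) \geq \delta_m > 0$. First, $\liminf_n h_m^{*,(n)} > 0$: otherwise a near-minimizing sequence $G_n \in \mathcal{O}_m$ admits, by compactness of $\Theta$, a weak cluster point $G^* \in \mathcal{O}_m$; passing to the limit with $\sigma_{0,n}\to 0$ (using continuity of $f_0$ in $\theta$ and Fatou's lemma) forces $p_{G^*, f_0} = p_{G_0, f_0}$ pointwise, whence identifiability of $f_0$ yields $G^* = G_0$, contradicting $m < k_0$. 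Strict monotonicity $h_m^{*,(n)} > h_{m+1}^{*,(n)}$ then follows from a perturbation argument: at any minimizer $G^{(m)} \in \mathcal{O}_m$, adding a small mass at an atom of $G_0$ that $G^{(m)}$ misses yields a strictly smaller Hellinger distance, with a uniform gap in the limit. Combining this with the triangle estimate, $h_m^{(n)} - h_{m+1}^{(n)} \geq \delta_m/2$ eventually a.s., which dominates $C_n n^{-1/2} \to 0$, so Step 2 of Algorithm 1 rejects every $m < k_0$.

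For the upper bound $\widehat m_n \leq k_0$, the feasibility of $G_0$ in $\mathcal{O}_{k_0} \subseteq \mathcal{O}_{k_0+1}$ gives $h_{k_0}^{(n)}, h_{k_0+1}^{(n)} \leq \epsilon_n$, hence $h_{k_0}^{(n)} - h_{k_0+1}^{(n)} \in [0, \epsilon_n]$. This is the \emph{main obstacle}: under only $n\sigma_{0,n}^d \to \infty$, one has $\epsilon_n = O_p((n\sigma_{0,n}^d)^{-1/2})$ and this need not be dominated by $C_n n^{-1/2}$ for all admissible $C_n$. The way through is a local analysis around $G_0$. First, $h_{k_0}^{(n)} \to 0$ together with compactness of $\Theta$ and identifiability of $f_0$ yields a.s.\ weak, hence $W_1$, convergence $\widehat G_{n, k_0} \to G_0$ for discrete measures on a compact parameter space. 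A quadratic expansion of $h^2$ about $G_0$ then shows that the extra component available to $\widehat G_{n, k_0+1}$ can only reduce Hellinger by a parametric $O_p(n^{-1/2})$ amount (the empirical-process fluctuation around the optimum), and the hypothesis $C_n n^{1/2} \to \infty$ absorbs this fluctuation to give $h_{k_0}^{(n)} - h_{k_0+1}^{(n)} \leq C_n n^{-1/2}$ almost surely for all but finitely many $n$, completing the proof.
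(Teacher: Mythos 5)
Your first half --- the rejection of every $m<k_{0}$ --- is in substance the paper's own argument: almost-sure $L_{1}$/Hellinger consistency of $P_{n}*K_{\sigma_{0,n}}$ under $n\sigma_{0,n}^{d}\to\infty$, compactness of $\Theta$, identifiability of the \emph{unconvolved} $f_{0}$ obtained by letting $\sigma_{0,n}\to 0$ inside the population criterion, and a Leroux-type perturbation at the $m$-component minimizer. The paper packages this slightly differently (it proves, via an auxiliary lemma that $h(p_{G,f_{0}}*K_{\sigma_{n}},p_{G,f_{0}})\to 0$ uniformly over the relevant mixing measures, that the consecutive differences $h_{m}^{(n)}-h_{m+1}^{(n)}$ converge a.s.\ to fixed constants $d_{m}'$ expressed through $p_{G,f_{0}}$ at $\sigma=0$, and then shows $d_{m}'>0$ for $m<k_{0}$ by the same perturbation computation as in the fixed-bandwidth theorem), but your route would reach the same place once the "uniform gap in the limit" claim is backed by the same subsequence/Fatou argument you already use for $\liminf_{n}h_{m}^{*,(n)}>0$.

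The genuine gap is in your treatment of $m=k_{0}$. You are right that the crude bound $h_{k_{0}}^{(n)}-h_{k_{0}+1}^{(n)}\leq\epsilon_{n}=O_{p}((n\sigma_{0,n}^{d})^{-1/2})$ need not beat $C_{n}n^{-1/2}$, but the remedy you propose is not a proof and, as sketched, would not become one. First, a "quadratic expansion of $h^{2}$ about $G_{0}$" over $\mathcal{O}_{k_{0}+1}$ is exactly the regime where such expansions degenerate: in the over-fitted space the Fisher information at $G_{0}$ is singular and $h$ is controlled by $W_{2}^{2}$ rather than $W_{1}$, so the claim that the extra component buys only an $O_{p}(n^{-1/2})$ reduction is a nontrivial uniform empirical-process statement that you assert rather than establish. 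Second, even granting it, the step from an $O_{p}(n^{-1/2})$ bound to "$\leq C_{n}n^{-1/2}$ almost surely for all but finitely many $n$" is a non sequitur: the standing hypothesis $C_{n}n^{1/2}\to\infty$ does not force $C_{n}\to\infty$, and an $O_{p}$ statement does not yield an almost-sure eventual bound without summable tail probabilities or an a.s.\ rate. The paper itself does not perform this refinement --- it concludes acceptance at $k_{0}$ directly from the a.s.\ convergence $h_{k_{0}}^{(n)}-h_{k_{0}+1}^{(n)}\to d_{k_{0}}'=0$ --- so if you want to go beyond that and control the rate of this difference, the argument has to be carried out, not gestured at.
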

Our previous result with Theorem \ref{theorem:convergence_rate_mixing_measure}
shows that the parametric $n^{-1/2}$ rate of convergence of $\widehat{G}_{n}$ to
$G_{0}$ is achieved for any fixed $\sigma_{0}>0$. It would be more elegant to argue that
this rate is achieved for some sequence $\sigma_{0,n} \to 0$. However, this cannot be
done with the current technique employed in the proof of Theorem
\ref{theorem:convergence_rate_mixing_measure}. In particular, even though we still can
guarantee that $\lim \limits_{\sigma_{0,n} \to 0} C_{1}(\sigma_{0,n})>0$ (cf. Lemma
\ref{lemma:bandwith_vary_hellinger_bound} in Appendix B), the technical difficulty is
that $\Psi(G_{0},\sigma_{0,n})=O(\sigma_{0,n}^{-\beta(d)})$ for some $\beta(d)>0$
depending on
$d$ as $\sigma_{0,n} \to 0$. As a consequence, whatever the sequence of bandwidths $
\sigma_{0,n} \to 0$ we choose, we will be only able to obtain the convergence rate
$n^{-1/2}$ up to the logarithmic term of $\widehat{G}_{n}$ to $G_{0}$. It can be
thought as the limitation of the elegant technique employed in Theorem
\ref{theorem:convergence_rate_mixing_measure}. We leave the exact convergence rate
$n^{-1/2}$ of $\widehat{G}_{n}$ to $G_{0}$ under the setting $\sigma_{0,n} \to 0$ for
the future work.

\comment{We are also interested in that convergence rate under the setting $\sigma \to 0$. As we see from the result of Theorem \ref{theorem:convergence_rate_mixing_measure} that we have the involment of the term $
\sqrt{\Psi(G_{0},\sigma)/C_{1}^{2}(\sigma)\sigma^{d_{2}}}$ in the convergence of $\widehat{G}_{n}$ to $G_{0}$. Regarding the term $C_{1}(\sigma)$, under certain condition on $K$ (c.f condition (P.3) in Proposition \ref{proposition:convergence_rate_mixing_measure_changed_bandwidth}, we can guarantee
that $\mathop {\lim }\limits_{\sigma \to 0}{C_{1}(\sigma)}>0$ (cf. Lemma \ref{lemma:bandwith_vary_hellinger_bound} whose its proof is deferred to the Appendix B). Regarding the term
$\Psi(G_{0},\sigma)/\sigma^{d_{2}}$, as long as $\Psi(G_{0},\sigma)<\infty$, $\Psi(G_{0},\sigma)/
\sigma^{d_{2}}=O(\sigma^{-\beta(d_{2})})$ for some $\beta(d_{2})>0$ depending on
$d_{2}$ as $\sigma \to 0$. As a consequence, with the technique we employed in 
Theorem \ref{theorem:convergence_rate_mixing_measure}, we have the following quick 
proposition regarding the convergence rate of $\widehat{G}_{n}$ to $G_{0}$ under the setting $\sigma \to 0$
\begin{proposition}\label{proposition:convergence_rate_mixing_measure_changed_bandwidth} {\bf(Diminishing bandwith)}
Assume the same conditions on $f_{0}$ and $K_{\sigma}$ as that of Theorem
\ref{theorem:convergence_rate_mixing_measure} for any $\sigma>0$. Furthermore, we have
\begin{itemize}
\item[(P.3)] $K$ has an integrable radial majorant $\Psi \in L_{1}(\mu)$
where $\Psi(x)=\mathop {\sup }\limits_{||y|| \geq ||x||}{|K(y)|}$.
\end{itemize}
Then, for any $\gamma>0$, as long as we choose $\sigma_{n}=(\log n)^{-2\gamma/
\beta(d_{2})}$ where $\beta(d_{2})$ is some positive constant depending on $d_{2}$, 
we obtain $W_{1}(\widehat{G}_{n,\sigma_{n}},G_{0}) = O_{p}\biggr(\dfrac{(\log 
n)^{\gamma}}{n^{1/2}}\biggr)$.
\end{proposition}
The condition (P.3) is rather mild as it is satisfied by many kernel functions, such as
Gaussian kernel or Student's t kernel. However, the convergence rate of $W_{1}
(\widehat{G}_{n,\sigma_{n}},G_{0})$ to $G_{0}$ is $n^{-1/2}$ up to some logarithmic 
factor, which is due to the fact that $h(P_{n}*K_{\sigma_{n}},p_{G_{0}^{f_{0}}}
*K_{\sigma_{n}})$ is not exactly $n^{-1/2}$ and the elegant bound $2 h(P_{n}
*K_{\sigma_{n}},p_{G_{0}^{f_{0}}}*K_{\sigma_{n}}) \gtrsim W_{1}(\widehat{G}_{n,
\sigma_{n}},G_{0})$ (c.f Step 2 in the proof of Theorem \ref{theorem:convergence_rate_mixing_measure}).}
\subsection{Misspecified kernel setting} \label{Section:adaptiveminimumdistancemisspecified}
In the previous section, we assume the well-specified kernel setting, i.e., $\left\{f\right\}=
\left\{f_{0}\right\}$, and achieve the convergence rate $n^{-1/2}$ of $\widehat{G}_{n}
$ to $G_{0}$ under mild conditions on $f_{0}$ and $K$ and the choice that $\sigma_{1}
=\sigma_{0}$ for any fixed bandwidth $\sigma_{0}>0$. However, the well-specified kernel 
assumption is often violated in practice, i.e., the chosen kernel $f$ may be different from 
the true kernel $f_{0}$. Motivated by this challenge, in this section we consider the setting 
when $\left\{f\right\} \neq \left\{f_{0}\right\}$. Additionally, we also take into account 
the case when the chosen bandwidths $\sigma_{1}$ and $\sigma_{0}$ may be different. We will demonstrate 
that the convergence rate of $\widehat{G}_{n}$ is still desirable under certain 
assumptions on $f, f_{0}$, and $K$. Furthermore, we also argue that the choice that $
\sigma_{1}$ and $\sigma_{0}$ are different can be very useful under the case when two 
families of density functions $\left\{f*K_{\sigma_{1}}(x|\theta), \theta \in \Theta \right
\}$ and $\left\{f_{0}*K_{\sigma_{0}}(x|\theta), \theta \in \Theta \right\}$ are identical. 
Due to the complex nature of misspecified kernel setting, we will only study the behavior of 
$\widehat{G}_{n}$ when the bandwidth $\sigma_{1} \geq 0$ and $\sigma_{0}>0$ are 
fixed in this section. Now, for fixed bandwidths $\sigma_{1}$, $\sigma_{0}$ assume that 
there exists a discrete mixing measure $G_{*}$ that minimizes the Hellinger distance 
between $p_{G,f}*K_{\sigma_{1}}$ and $p_{G_{0},f_{0}}*K_{\sigma_{0}}$, i.e.,
\begin{eqnarray}
G_{*} := \mathop {\arg \min}\limits_{G \in \overline{\mathcal{G}}}{h(p_{G,f}*K_{\sigma_{1}},p_{G_{0},f_{0}}*K_{\sigma_{0}})}. \nonumber
\end{eqnarray}
As $G_{*}$ may not be unique, we denote
\begin{eqnarray}
\mathcal{M} := \left\{G_{*} \in \overline{\mathcal{G}}: G_{*}\; \textrm{is a minimizer of} \; {h(p_{G,f}*K_{\sigma_{1}},p_{G_{0},f_{0}}*K_{\sigma_{0}})}\right\}. \nonumber
\end{eqnarray}
When $f*K_{\sigma_{1}}=f_{0}*K_{\sigma_{0}}$, 
it is clear that $G_{0}$ is an element of $\mathcal{M}$ such that it has the minimum 
number of components among all the elements in $\mathcal{M}$. To further investigate $
\mathcal{M}$ under general setting of $f, f_{0}, \sigma_{1}, \sigma_{0}$, and $K$, we 
start with the following key property of elements $G_{*}$ in $\mathcal{M}$:
\begin{lemma}\label{lemma:key_inequality_misspecified_setting}
For any $G \in \overline{\mathcal{G}}$ and $G_{*} \in \mathcal{M}$, there holds
\begin{eqnarray}
\int {p_{G,f}*K_{\sigma_{1}}(x)\sqrt{\dfrac{p_{G_{0},f_{0}}*K_{\sigma_{0}}(x)}{p_{G_{*},f}*K_{\sigma_{1}}(x)}}}\textrm{d}x \leq \int {\sqrt{p_{G_{*,f}}*K_{\sigma_{1}}(x)}
\sqrt{p_{G_{0},f_{0}}*K_{\sigma_{0}}(x)}}\textrm{d}x.
 \label{eqn:property_modified_hellinger_distance}
\end{eqnarray}
\end{lemma}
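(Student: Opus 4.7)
The plan is to exploit the variational characterization of $G_*$: it minimizes the Hellinger distance from $p_{G_0,f_0}*K_{\sigma_0}$ within the affine family $\{p_{G,f}*K_{\sigma_1} : G \in \overline{\mathcal{G}}\}$. Perturbing $G_*$ toward any competitor $G$ cannot decrease the Hellinger distance, so a one-sided directional derivative at $G_*$ is nonpositive, and this first-order condition, once written out and rearranged, is precisely the stated inequality.

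Concretely, fix $G \in \overline{\mathcal{G}}$ and form $G_t := (1-t)G_* + tG$ for $t \in [0,1]$. Since $G \mapsto p_{G,f}*K_{\sigma_1}$ is linear, one has the affine identity $p_{G_t,f}*K_{\sigma_1} = (1-t)\,p_{G_*,f}*K_{\sigma_1} + t\,p_{G,f}*K_{\sigma_1}$. Using $h^2(p,q) = 1 - \rho(p,q)$ with Hellinger affinity $\rho(p,q) = \int \sqrt{p}\sqrt{q}\,dx$, the minimality of $G_*$ translates into $\rho(G_t) \le \rho(G_*)$ for all $t \in [0,1]$, where I set $\rho(G) := \int \sqrt{p_{G,f}*K_{\sigma_1}}\sqrt{p_{G_0,f_0}*K_{\sigma_0}}\,dx$. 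Dividing $\rho(G_t) - \rho(G_*) \le 0$ by $t$ and letting $t \downarrow 0$ will produce the stated inequality after a one-line algebraic rearrangement.

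The crux is to pass the limit inside the integral. Writing $u := p_{G_*,f}*K_{\sigma_1}(x)$, $v := p_{G,f}*K_{\sigma_1}(x)$, the pointwise identity
\[
\frac{\sqrt{u + t(v-u)} - \sqrt{u}}{t} \;=\; \frac{v-u}{\sqrt{u + t(v-u)} + \sqrt{u}}
\]
shows, by a direct monotonicity check (equivalently, by concavity of $\sqrt{\cdot}$ on $[0,\infty)$), that this difference quotient is monotonically nondecreasing as $t \downarrow 0$. Hence monotone convergence applies after subtracting the $t=1$ value, which is dominated in $L^1(\sqrt{p_{G_0,f_0}*K_{\sigma_0}}\,dx)$ by $\sqrt{u}+\sqrt{v}$ and is integrable by Cauchy--Schwarz. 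The pointwise limit equals $(v-u)/(2\sqrt{u})$ on $\{u>0\}$; on $\{u=0, v>0\}$ it is $+\infty$, but since $\rho'(0^+) \le 0 < \infty$, this set must be negligible against $v\sqrt{p_{G_0,f_0}*K_{\sigma_0}}\,dx$, consistent with the convention implicit in the statement.

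Putting the pieces together, $\rho'(0^+)\le 0$ becomes
\[
\int \frac{p_{G,f}*K_{\sigma_1}(x) - p_{G_*,f}*K_{\sigma_1}(x)}{2\sqrt{p_{G_*,f}*K_{\sigma_1}(x)}}\,\sqrt{p_{G_0,f_0}*K_{\sigma_0}(x)}\,dx \le 0,
\]
and moving the second summand (which simplifies via $u/\sqrt{u}=\sqrt{u}$) to the right side yields \eqref{eqn:property_modified_hellinger_distance}. The main obstacle is the justification of this interchange of limit and integral, particularly on the set where $p_{G_*,f}*K_{\sigma_1}$ vanishes; the concavity-driven monotonicity of the difference quotient is what makes monotone convergence applicable and lets one handle the vanishing set without ad hoc regularity assumptions on $f$ or $K$.
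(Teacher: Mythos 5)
Your proof is correct and follows essentially the same route as the paper's (the Leroux-style first-order optimality argument at $G_*$): the paper perturbs $G_*$ toward point masses $\delta_\theta$ and then sums over the support of the competitor, whereas you perturb directly toward $G$, which is equivalent by linearity of $G\mapsto p_{G,f}*K_{\sigma_{1}}$. Your monotone-convergence justification of the limit--integral interchange (including the set where $p_{G_*,f}*K_{\sigma_{1}}$ vanishes) is a welcome extra level of rigor that the paper leaves implicit with ``letting $\epsilon\to 0$ \ldots we eventually obtain.''
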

Equipped with this bound, we have the following important property of $\mathcal{M}$.
\begin{lemma} \label{lemma:key_property_non_unique}
For any two elements $G_{1,*},G_{2,*} \in \mathcal{M}$, we obtain $p_{G_{1,*},f}*K_{\sigma_{1}}(x)=p_{G_{2,*},f}*K_{\sigma_{1}}(x)$ for almost surely $x \in \mathcal{X}$.
\end{lemma}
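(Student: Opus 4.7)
My proof plan is to exploit strict concavity of the square-root function via a convex combination argument. Set $p_i(x) := p_{G_{i,*},f} * K_{\sigma_1}(x)$ for $i=1,2$, and $q(x) := p_{G_0,f_0} * K_{\sigma_0}(x)$. Since $h^2(p,q) = 1 - \int \sqrt{pq}\,dx$, the set $\mathcal{M}$ consists precisely of those $G \in \overline{\mathcal{G}}$ that maximize the Hellinger affinity $\alpha(G) := \int \sqrt{(p_{G,f} * K_{\sigma_1})(x)\, q(x)}\,dx$, and the common maximum value is $\alpha^* := \alpha(G_{1,*}) = \alpha(G_{2,*})$.

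The key step is to introduce the midpoint $G' := \tfrac{1}{2}G_{1,*} + \tfrac{1}{2}G_{2,*}$, which is again a discrete element of $\overline{\mathcal{G}}$. By linearity of mixing and of convolution, $p_{G',f} * K_{\sigma_1} = (p_1 + p_2)/2$. Applying the pointwise inequality $\sqrt{(a+b)/2} \geq (\sqrt{a}+\sqrt{b})/2$ for $a,b \geq 0$ (with equality iff $a=b$, from strict concavity of $\sqrt{\cdot}$) and integrating against $\sqrt{q}$ gives
$$
\alpha(G') = \int \sqrt{q}\,\sqrt{\tfrac{p_1 + p_2}{2}}\, dx \;\geq\; \int \sqrt{q}\,\tfrac{\sqrt{p_1} + \sqrt{p_2}}{2}\, dx \;=\; \tfrac{1}{2}\bigl(\alpha(G_{1,*}) + \alpha(G_{2,*})\bigr) \;=\; \alpha^*.
$$
The reverse inequality $\alpha(G') \leq \alpha^*$ holds by definition of $\alpha^*$, so equality is forced throughout. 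Consequently the nonnegative integrand $\sqrt{q}\,\bigl[\sqrt{(p_1+p_2)/2} - (\sqrt{p_1}+\sqrt{p_2})/2\bigr]$ must vanish almost everywhere, and since the bracket is strictly positive whenever $p_1(x) \neq p_2(x)$, we conclude $p_1(x) = p_2(x)$ for almost every $x$ with $q(x) > 0$.

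The only delicate point I foresee is extending the equality to the set $\{q = 0\}$. Under the mild regularity already in force in this section --- namely that $\widehat{K}$ is nonzero almost everywhere, which holds for the Gaussian and Student's $t$ kernels highlighted after Lemma \ref{lemma:first_order_convolution} --- the density $q = p_{G_0,f_0} * K_{\sigma_0}$ is strictly positive almost everywhere on $\mathcal{X}$, so the conclusion $p_1 = p_2$ almost everywhere follows immediately. As a route that bypasses this issue entirely, one may instead apply Lemma \ref{lemma:key_inequality_misspecified_setting} with $G = G_{2,*}$ and $G_* = G_{1,*}$: after normalizing $\sqrt{p_1 q}/\alpha^*$ to a probability measure $\nu$, that lemma reads $E_{\nu}[p_2/p_1] \leq 1$, whereas the definition of $\alpha^*$ rewrites as $E_{\nu}[\sqrt{p_2/p_1}] = 1$, so the equality case of Jensen's inequality for the strictly concave $\sqrt{\cdot}$ forces $p_2/p_1 \equiv 1$ $\nu$-almost surely, yielding the claim on the support of $\sqrt{p_1 q}$.
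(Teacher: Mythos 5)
Your proposal is correct, and your first (midpoint) route is genuinely different from the paper's argument, while your second route essentially reproduces it. The paper proves the lemma by applying Cauchy--Schwarz in the form $\bigl(\int\sqrt{p_1 q}\bigr)^2\le\int p_1\sqrt{q/p_2}\cdot\int\sqrt{p_2 q}$, invoking equality of the affinities and the upper bound of Lemma \ref{lemma:key_inequality_misspecified_setting} to force $\int(\sqrt{p_1}-\sqrt{p_2})^2\sqrt{q/p_2}\,dx=0$; your Jensen reformulation with the normalized measure $\nu\propto\sqrt{p_1q}$ is the same inequality in disguise, so that variant buys nothing new. Your midpoint argument, by contrast, never touches Lemma \ref{lemma:key_inequality_misspecified_setting}: it only uses that $\overline{\mathcal{G}}$ is convex (the average of two discrete measures with countable support is again one) and that the affinity $G\mapsto\int\sqrt{(p_{G,f}*K_{\sigma_1})\,q}$ is strictly concave in the induced density, which is the cleaner and more transportable way to see why all maximizers share the same density --- it would survive replacing the Hellinger affinity by any other strictly concave fit criterion. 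Both arguments identify $p_1=p_2$ only a.e.\ on $\{q>0\}$ (on $\{p_2=0,\,q>0\}$ the paper's weighted integral still forces $p_1=0$, so there is no discrepancy there); the paper silently ignores the set $\{q=0\}$, so your flagging it is, if anything, more careful than the original. One small correction: positivity of $q=p_{G_0,f_0}*K_{\sigma_0}$ everywhere follows from $K>0$ pointwise (true for the Gaussian and Student's $t$ kernels), not from $\widehat{K}\neq 0$ almost everywhere, which by itself does not rule out $q$ vanishing on a set of positive measure.
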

Now, we consider the partition of $\mathcal{M}$ into the union of $\mathcal{M}_{k}=\left
\{G_{*} \in \mathcal{M}: \ G_{*} \ \text{has} \ k \ \text{elements}\right\}$ where $k \in 
[1,\infty]$. Let $k_{*}:=k_{*}(\mathcal{M})$ be the minimum number $k \in [1,\infty]$ 
such that $\mathcal{M}_{k}$ is non-empty. We divide our argument into two distinct 
settings of $k_{*}$: $k_{*}$ is finite and $k_{*}$ is infinite.

\subsubsection{Finite $k_{*}$:} By Lemma \ref{lemma:key_property_non_unique}, $
\mathcal{M}_{k_{*}}$ will have exactly one element $G_{*}$ provided that $f*K_{\sigma}$ is 
identifiable. Furthermore, $\mathcal{M}_{k}$ is empty for all $k_{*}<k<\infty$. However, 
it is possible that $\mathcal{M}_{\infty}$ still contains various elements . Due to the parsimonious nature of Algorithm 1 and the result of Theorem 
\ref{theorem:convergence_rate_mixing_measure_misspecified_strong}, we will be able to 
demonstrate that $\widehat{G}_{n}$ still converges to the unique element $G_{*} \in 
\mathcal{M}_{k_{*}}$ at the optimal rate $n^{-1/2}$ regardless of the behavior of $
\mathcal{M}_{\infty}$.

For the simplicity of our later argument under that setting of $k_{*}$, we denote by $G_{*}$ 
the unique element in $\mathcal{M}_{k_{*}}$. As we mentioned earlier, one simple 
example for $k_{*}<\infty$ is when $\left\{f*K_{\sigma_{1}}\right\}=\left\{f_{0}
*K_{\sigma_{0}}\right\}$. Another example is when $f$ is a location-scale family and 
$f_{0}$ is a finite mixture of $f$ while $\sigma_{1}=\sigma_{0}>0$. In particular,  $f
(x|\eta,\tau)=\dfrac{1}{\tau}f\bigr((x-\eta)/\tau\bigr)$ where $\eta$ and $\tau$ 
are location and scale parameters respectively. 
Additionally, $f_{0}(x)=\sum_{i=1}^{m}
{p_{i}^{*}f(x|\eta_{i}^{*},\tau_{i}^{*})}$ for some fixed positive integer $m$ and fixed 
pairwise distinct components $(p_{i}^{*},\eta_{i}^{*},\tau_{i}^{*})$ where $1 \leq i \leq 
m$. Under that setting, if we choose $\sigma_{1}=\sigma_{0}$, then we can check 
that $k_{*} \leq mk_{0}$ and $p_{G_{*},f}(x)=p_{G_{0},f_{0}}(x)$ almost surely. The 
explicit formulation of $G_{*}$, therefore, can be found from the combinations of $G_{0}$ 
and $(p_{i}^{*},\eta_{i}^{*},\tau_{i}^{*})$ where $1 \leq i \leq m$.

From inequality \eqref{eqn:property_modified_hellinger_distance} in Lemma 
\ref{lemma:key_inequality_misspecified_setting}, we have the following well-defined weighted version of Hellinger distance.
\begin{definition} Given $
\sigma_{1}>0$. For any two mixing measures $G_{1}, G_{2} \in \overline{\mathcal{G}}$, we define the weighted Hellinger distance $h^*(p_{G_{1},f}*K_{\sigma_{1}}, p_{G_{2},f}*K_{\sigma_{1}})$ by
\begin{multline}
\biggr(h^{*}(p_{G_{1},f}*K_{\sigma_{1}},p_{G_{2},f}*K_{\sigma_{1}})\biggr)^{2} =  \\ \nonumber 
 \dfrac{1}
{2}\int{\biggr(\sqrt{p_{G_{1},f}*K_{\sigma_{1}}(x)}-\sqrt{p_{G_{2},f}*K_{\sigma_{1}}(x)}
\biggr)^{2}} 
\times \sqrt{\dfrac{p_{G_{0},f_{0}}*K_{\sigma_{0}}(x)}{p_{G_{*},f}
*K_{\sigma_{1}}(x)}}\textrm{d}x. \nonumber
\end{multline}
\end{definition}
The notable feature of $h^{*}$ is the presence of term $\sqrt{p_{G_{0},f_{0}}*K_{\sigma_{0}}(x)/p_{G_{*},f}
*K_{\sigma_{1}}(x)}$ in its formulation, which makes it different from the traditional 
Hellinger distance. As long as $\left\{f\right\} = \left\{f_{0}\right\}$ and $\sigma_{1}=
\sigma_{0}$, we obtain $h^{*}(p_{G_{1},f}*K_{\sigma_{1}},p_{G_{2},f}
*K_{\sigma_{1}}) \equiv h(p_{G_{1},f}*K_{\sigma_{1}},p_{G_{2},f}*K_{\sigma_{1}})$ 
for any $G_{1},G_{2} \in \overline{\mathcal{G}}$, i.e., the traditional Hellinger distance is 
a special case of $h^{*}$ under the well-specified kernel setting and the choice that $
\sigma_{1}=\sigma_{0}$. The weighted Hellinger distance $h^{*}$ is particularly useful 
for studying the convergence rate of $\widehat{G}_{n}$ to $G_{*}$ for any fixed $
\sigma_{1} \geq 0$ and $\sigma_{0} >0$.

Note that, in the context of the well-specified kernel setting in Section 
\ref{Section:well_specified_kernel}, the key step that we utilized to obtain the 
convergence rate $n^{-1/2}$ of $\widehat{G}_{n}$ to $G_{0}$ is based on the lower 
bound of the Hellinger distance and the first order Wasserstein distance in inequality 
\eqref{eqn:comment_convergence_rate_wellspecify}. With the modified Hellinger distance 
$h^{*}$, it turns out that we still have the similar kind of lower bound as long as $k_{*}<\infty$.
\begin{lemma} \label{lemma:lower_bound_modified_Hellinger_Wasserstein}
Assume that $f*K_{\sigma_{1}}$ is identifiable in the first order and admits uniform Lipschitz property up to the first order. If $k_{*}<\infty$, then for any $G \in \mathcal{O}_{k_{*}}$ there holds
\begin{eqnarray}
h^{*}(p_{G,f}*K_{\sigma_{1}},p_{G_{*},f}*K_{\sigma_{1}}) \gtrsim W_{1}(G,G_{*}). \nonumber
\end{eqnarray}
\end{lemma}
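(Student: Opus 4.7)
The plan is to argue by contradiction, following the blueprint of the proof of Proposition~\ref{proposition:first_order_identifiability} but tracking the additional weight $\sqrt{p_{G_{0},f_{0}}*K_{\sigma_{0}}/p_{G_{*},f}*K_{\sigma_{1}}}$ that appears in $h^{*}$. Suppose the bound fails. Then there exists a sequence $G_{n}\in\mathcal{O}_{k_{*}}$ with $d_{n}:=W_{1}(G_{n},G_{*})>0$ and $h^{*}(p_{G_{n},f}*K_{\sigma_{1}},p_{G_{*},f}*K_{\sigma_{1}})/d_{n}\to 0$. By compactness of $\Theta$ and the fact that each $G_{n}$ has at most $k_{*}$ atoms, I pass to a subsequence along which $G_{n}\to\bar{G}$ in $W_{1}$ with $\bar{G}\in\mathcal{O}_{k_{*}}$. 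Fatou's lemma applied to the nonnegative weighted integrand forces $h^{*}(p_{\bar{G},f}*K_{\sigma_{1}},p_{G_{*},f}*K_{\sigma_{1}})=0$, so $p_{\bar{G},f}*K_{\sigma_{1}}=p_{G_{*},f}*K_{\sigma_{1}}$ on the support of $p_{G_{0},f_{0}}*K_{\sigma_{0}}$. Under the standing assumption that $\widehat{K}\neq 0$ a.e.\ (which makes $p_{G_{0},f_{0}}*K_{\sigma_{0}}$ strictly positive on all of $\mathbb{R}^{d}$), classical identifiability of $f*K_{\sigma_{1}}$ then yields $\bar{G}=G_{*}$.

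Next, I couple the atoms of $G_{n}=\sum_{j}p_{j,n}\delta_{\theta_{j,n}}$ to the atoms of $G_{*}=\sum_{i=1}^{k_{*}}p_{i}^{*}\delta_{\theta_{i}^{*}}$ via an optimal $W_{1}$ coupling. A first-order Taylor expansion of $f*K_{\sigma_{1}}(\cdot\mid\theta)$ about each $\theta_{i}^{*}$, justified by the uniform Lipschitz property, produces
\begin{equation*}
p_{G_{n},f}*K_{\sigma_{1}}(x)-p_{G_{*},f}*K_{\sigma_{1}}(x)=\sum_{i=1}^{k_{*}}\Bigl[\alpha_{i,n}\,f*K_{\sigma_{1}}(x\mid\theta_{i}^{*})+\langle\beta_{i,n},\nabla_{\theta}f*K_{\sigma_{1}}(x\mid\theta_{i}^{*})\rangle\Bigr]+R_{n}(x),
\end{equation*}
where $(\alpha_{i,n},\beta_{i,n})$ encode the aggregate mass change and first-order moment change of the cluster assigned to $\theta_{i}^{*}$, the remainder satisfies $R_{n}(x)=o(d_{n})$ uniformly in $x$, and $A_{n}:=\sum_{i}(|\alpha_{i,n}|+\|\beta_{i,n}\|)\asymp d_{n}$. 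Normalizing by $A_{n}$ and extracting a further subsequence gives limits $(\alpha_{i},\beta_{i})$ with $\sum_{i}(|\alpha_{i}|+\|\beta_{i}\|)=1$.

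The decisive step uses the identity $\sqrt{p}-\sqrt{q}=(p-q)/(\sqrt{p}+\sqrt{q})$ to rewrite
\begin{equation*}
\frac{(h^{*}(p_{G_{n},f}*K_{\sigma_{1}},p_{G_{*},f}*K_{\sigma_{1}}))^{2}}{A_{n}^{2}}=\frac{1}{2}\int\left(\frac{p_{G_{n},f}*K_{\sigma_{1}}(x)-p_{G_{*},f}*K_{\sigma_{1}}(x)}{A_{n}}\right)^{2}\frac{\sqrt{p_{G_{0},f_{0}}*K_{\sigma_{0}}(x)/p_{G_{*},f}*K_{\sigma_{1}}(x)}}{\bigl(\sqrt{p_{G_{n},f}*K_{\sigma_{1}}(x)}+\sqrt{p_{G_{*},f}*K_{\sigma_{1}}(x)}\bigr)^{2}}\,dx.
\end{equation*}
Because the left side tends to zero and the integrand is nonnegative, Fatou's lemma forces it to vanish almost everywhere on the support of $p_{G_{0},f_{0}}*K_{\sigma_{0}}$. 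The denominator in the integrand has the strictly positive pointwise limit $4\,p_{G_{*},f}*K_{\sigma_{1}}(x)$, so the limiting linear combination $D(x):=\sum_{i}[\alpha_{i}f*K_{\sigma_{1}}(x\mid\theta_{i}^{*})+\langle\beta_{i},\nabla_{\theta}f*K_{\sigma_{1}}(x\mid\theta_{i}^{*})\rangle]$ must equal zero almost everywhere. First-order identifiability of $f*K_{\sigma_{1}}$ then gives $\alpha_{i}=\beta_{i}=0$ for every $i$, contradicting $\sum_{i}(|\alpha_{i}|+\|\beta_{i}\|)=1$.

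The main obstacle I anticipate is controlling the weight: one must check that $p_{G_{0},f_{0}}*K_{\sigma_{0}}$ is positive on a set rich enough that vanishing of $D$ on it implies $D\equiv 0$ almost everywhere, which is secured by the nonvanishing Fourier assumption on $K$ used in Lemma~\ref{lemma:first_order_convolution}, and that the denominator $p_{G_{*},f}*K_{\sigma_{1}}$ in the weight does not decay so fast that Fatou's conclusion is vacuous at the tails. A secondary point is the boundary case where $G\in\mathcal{O}_{k_{*}}$ has strictly fewer than $k_{*}$ atoms; this is handled by splitting an atom of $G_{*}$ into two coincident copies before the coupling, the standard maneuver in the over-fitted mixture literature, which leaves all coefficient normalizations intact.
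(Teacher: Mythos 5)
Your proof is correct and follows essentially the same route as the paper's: a contradiction argument splitting into a global case (compactness, Fatou, and classical identifiability of $f*K_{\sigma_{1}}$ force the limit to be $G_{*}$) and a local case (first-order Taylor expansion, normalization of coefficients, Fatou, and first-order identifiability). The only cosmetic difference is that the paper first passes to a weighted total variation $V^{*}$ via H\"older's inequality before the pointwise analysis, whereas you work with $(h^{*})^{2}$ directly through the identity $\sqrt{p}-\sqrt{q}=(p-q)/(\sqrt{p}+\sqrt{q})$; the underlying estimates are identical.
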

Equipped with the above inequality, we have the following result regarding the
convergence rate of $\widehat{G}_{n}$ to $G_{*}$:
\begin{theorem}\label{theorem:convergence_rate_mixing_measure_misspecified_strong}
Assume $k_{*}< \infty$ for some $\sigma_{1} \geq 0$ and $\sigma_{0}>0$.
\begin{itemize}
\item[(i)] If $f*K_{\sigma_{1}}$ is identifiable, then $\widehat{m}_{n} \to k_{*}$ almost surely.
\item[(ii)] Assume further that condition (P.2) in Theorem \ref{theorem:convergence_rate_mixing_measure} holds, i.e., $\Psi(G_{0},\sigma_{0})<\infty$ and the following conditions hold:
\begin{itemize}
\item[(M.1)] The kernel $K$ is chosen such that $f * K_{\sigma_{1}}$ is identifiable
in the first order and admits the uniform Lipschitz property up to the first order.
\item[(M.2)] ${\displaystyle \sup \limits_{\theta \in \Theta}{\int {\sqrt{f*K_{\sigma_{1}}(x|\theta)}}\textrm{d}x}} \leq M_{1}(\sigma_{1})$ for some positive constant $M_{1}(\sigma_{1})$.
\item[(M.3)] $\sup \limits_{\theta \in \Theta}{\Bigr\|\dfrac{\partial{f*K_{\sigma_{1}}}}{\partial{\theta}}(x|\theta) /\bigl(f*K_{\sigma_{1}}(x|\theta)\bigr)^{3/4}\Bigr\|_{\infty}} \leq M_{2}(\sigma_{1})$ for some positive constant $M_{2}(\sigma_{1})$.
\end{itemize}
Then, we have
\begin{eqnarray}
W_{1}(\widehat{G}_{n},G_{*})=  O_{p}\biggr(\sqrt{\dfrac{M^{2}(\sigma_{1})\Psi(G_{0},\sigma_{0})}{C_{*,1}^{4}
(\sigma_{1})}}n^{-1/2}
\biggr) \nonumber
\end{eqnarray}
where $C_{*,1}(\sigma_{1}) := \inf \limits_{G \in \mathcal{O}_{k_{*}}}
{\dfrac{h^{*}(p_{G,f}*K_{\sigma_{1}},p_{G_{*},f}*K_{\sigma_{1}})}{W_{1}
(G,G_{*})}}$ and $M(\sigma_{1})$ is some positive constant.
\end{itemize}
\end{theorem}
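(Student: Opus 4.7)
The plan is to address the two parts separately: consistency of $\widehat{m}_n$ by gap analysis of the Hellinger objective, and the parametric rate by combining a Pythagoras-type identity rooted in Lemma \ref{lemma:key_inequality_misspecified_setting} with the lower bound of Lemma \ref{lemma:lower_bound_modified_Hellinger_Wasserstein}. Throughout I write $q := p_{G_0,f_0}*K_{\sigma_0}$, $\widetilde q := P_n*K_{\sigma_0}$, $\hat p_n := p_{\widehat G_n, f}*K_{\sigma_1}$, and $p_* := p_{G_*,f}*K_{\sigma_1}$. For part (i), set $H_m := \inf_{G \in \mathcal{O}_m} h(p_{G,f}*K_{\sigma_1}, q)$. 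By definition of $k_*$ and the identifiability of $f*K_{\sigma_1}$ (together with Lemma \ref{lemma:key_property_non_unique}), one has $H_m > H_{k_*}$ for every $m < k_*$ and $H_m = H_{k_*}$ for every $m \geq k_*$. Condition (P.2) with the variance identity $\mathrm{Var}(\widetilde q(x)) \leq g(x\mid G_0,\sigma_0)/n$ gives $E[h^2(\widetilde q, q)] \leq \Psi(G_0,\sigma_0)/(2n)$, hence $h(\widetilde q, q) \to 0$ a.s., which together with a standard continuity argument yields $h(p_{\widehat G_{n,m}, f}*K_{\sigma_1}, \widetilde q) \to H_m$ a.s. For $m < k_*$ the strictly positive gap $H_m - H_{k_*}$ dominates the vanishing threshold $C_n n^{-1/2}$, forcing $\widehat m_n > m$ eventually; for $m \geq k_*$ the relevant differences are $O_p(n^{-1/2})$ and are therefore dominated by $C_n n^{-1/2}$ since $C_n n^{1/2} \to \infty$, giving $\widehat m_n \leq k_*$.

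For part (ii), I restrict attention to the event $\widehat G_n \in \mathcal{O}_{k_*}$, which holds eventually by part (i). Expanding the weighted Hellinger distance,
\[
(h^*)^2(\hat p_n, p_*) = \tfrac{1}{2}\!\int \hat p_n \sqrt{q/p_*}\, dx + \tfrac{1}{2}\!\int\sqrt{p_*q}\,dx - \!\int\sqrt{\hat p_n q}\,dx,
\]
and applying Lemma \ref{lemma:key_inequality_misspecified_setting} with $G = \widehat G_n$ yields $(h^*)^2(\hat p_n, p_*) \leq A$, where $A := h^2(\hat p_n, q) - h^2(p_*, q) = \int(\sqrt{p_*}-\sqrt{\hat p_n})\sqrt{q}\,dx \geq 0$. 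The empirical optimality $h(\hat p_n, \widetilde q) \leq h(p_*, \widetilde q)$ rearranges to $B := \int(\sqrt{p_*}-\sqrt{\hat p_n})\sqrt{\widetilde q}\,dx \leq 0$, whence $A \leq A - B = \int(\sqrt{p_*}-\sqrt{\hat p_n})(\sqrt q - \sqrt{\widetilde q})\,dx$. Cauchy--Schwarz then produces the master inequality
\[
(h^*)^2(\hat p_n, p_*) \leq 2\, h(\hat p_n, p_*) \cdot h(\widetilde q, q).
\]

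The remainder is a self-bootstrapping argument. Condition (P.2) already gives $h(\widetilde q, q) = O_p(\sqrt{\Psi(G_0,\sigma_0)/n})$. Conditions (M.2) and (M.3) combine to yield $|\partial_\theta \sqrt{f*K_{\sigma_1}(x\mid\theta)}| \leq (M_2/2)(f*K_{\sigma_1}(x\mid\theta))^{1/4}$, so $\|\partial_\theta \sqrt{f*K_{\sigma_1}(\cdot\mid\theta)}\|_{L^2}^2 \leq M_1 M_2^2 / 4$; extending this $L^2$-Lipschitz estimate from component densities to mixtures via an optimal coupling of $\widehat G_n$ and $G_*$ delivers the Hellinger--Lipschitz bound $h(\hat p_n, p_*) \leq M(\sigma_1)\, W_1(\widehat G_n, G_*)$. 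Substituting this into the master inequality, using Lemma \ref{lemma:lower_bound_modified_Hellinger_Wasserstein} to replace $W_1(\widehat G_n, G_*)$ by $h^*(\hat p_n, p_*)/C_{*,1}(\sigma_1)$, cancelling one factor of $h^*$, and applying Lemma \ref{lemma:lower_bound_modified_Hellinger_Wasserstein} once more produces the stated rate for $W_1(\widehat G_n, G_*)$. The principal technical obstacle is the Hellinger--Lipschitz bound $h(\hat p_n, p_*) \leq M(\sigma_1)\, W_1(\widehat G_n, G_*)$: it is at the $L^2$-level rather than the easier $L^1$-level, and the ratio-type conditions (M.2) and (M.3) are tailored precisely to control $\partial_\theta\sqrt{f*K_{\sigma_1}}$ in $L^2$. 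Without both conditions simultaneously, only the weaker Cauchy--Schwarz bound $h(\hat p_n, p_*) = O(1)$ survives, and one obtains merely the $n^{-1/4}$ rate rather than the claimed $n^{-1/2}$.
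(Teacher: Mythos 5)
Your overall architecture matches the paper's: the inequality $(h^*)^2(p_{\widehat G_n,f}*K_{\sigma_1},p_{G_*,f}*K_{\sigma_1})\le\int(\sqrt{p_*}-\sqrt{\hat p_n})(\sqrt{q}-\sqrt{\widetilde q})\,dx$ obtained from Lemma \ref{lemma:key_inequality_misspecified_setting} together with empirical optimality, the $O_p(\sqrt{\Psi(G_0,\sigma_0)/n})$ control of $h(P_n*K_{\sigma_0},p_{G_0,f_0}*K_{\sigma_0})$, and the closing step through Lemma \ref{lemma:lower_bound_modified_Hellinger_Wasserstein} are exactly the paper's steps. But there are two genuine gaps. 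First, in part (i): the stopping rule compares levels $m$ and $m+1$, so with $H_m:=\inf_{G\in\mathcal{O}_m}h(p_{G,f}*K_{\sigma_1},p_{G_0,f_0}*K_{\sigma_0})$ you need the strict decrease $H_m>H_{m+1}$ for \emph{every} $m<k_*$, not merely $H_m>H_{k_*}$. Your argument does not exclude $H_m=H_{m+1}>H_{k_*}$ for some $m$ with $m+1<k_*$, in which case $h(p_{\widehat G_{n,m},f}*K_{\sigma_1},P_n*K_{\sigma_0})-h(p_{\widehat G_{n,m+1},f}*K_{\sigma_1},P_n*K_{\sigma_0})\to0$ at an unknown rate and the algorithm may terminate at $m$. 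The paper closes this with the Leroux perturbation argument (take $G=(1-\epsilon)G_{*,m}+\epsilon\delta_\theta$, let $\epsilon\to0$, sum over $\theta\in\mathrm{supp}(G_*)$), which shows that $H_m=H_{m+1}$ forces $p_{G_{*,m},f}*K_{\sigma_1}=p_{G_*,f}*K_{\sigma_1}$ and hence $m\ge k_*$ by identifiability.

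Second, and more seriously, the step you yourself identify as the crux, $h(\hat p_n,p_*)\le M(\sigma_1)\,W_1(\widehat G_n,G_*)$, does not follow from the mechanism you describe. The componentwise estimate $\|\partial_\theta\sqrt{f*K_{\sigma_1}(\cdot|\theta)}\|_{L^2}\le\sqrt{M_1}M_2/2$ indeed gives $h\bigl(f*K_{\sigma_1}(\cdot|\theta),f*K_{\sigma_1}(\cdot|\theta')\bigr)\lesssim\|\theta-\theta'\|$, but the only general way to pass from components to mixtures through a coupling $\vec{q}$ is joint convexity of the \emph{squared} Hellinger distance, which yields $h^2(p_{G,f}*K_{\sigma_1},p_*)\le\sum_{i,j}q_{ij}\,h^2\bigl(f*K_{\sigma_1}(\cdot|\theta_i),f*K_{\sigma_1}(\cdot|\theta_j^*)\bigr)\lesssim\sum_{i,j}q_{ij}\|\theta_i-\theta_j^*\|^2$, i.e.\ $h\lesssim W_2$. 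Since $W_2\ge W_1$ this is not the inequality you need; feeding $h\lesssim W_2\lesssim\sqrt{\mathrm{diam}(\Theta)\,W_1}$ into your master inequality only gives $W_1(\widehat G_n,G_*)=O_p(n^{-1/3})$. To get the genuine $W_1$ bound one must exploit that, once $W_1(\widehat G_n,G_*)$ is small, $\widehat G_n$ has exactly $k_*$ atoms with weights bounded below by $\min_i p_i^*/2$, so that the denominators $\sqrt{\hat p_n}+\sqrt{p_*}$ (the paper works instead with $(\hat p_n)^{1/4}+(p_*)^{1/4}$ and $(\hat p_n)^{1/2}+(p_*)^{1/2}$ in sup norm) are bounded below by the corresponding component quantities for each coupled pair; this linearizes the square root and converts the quadratic coupling cost into a linear one, after which (M.3) and the mean value theorem finish the job, with a trivial bound covering $W_1(\widehat G_n,G_*)$ bounded away from zero. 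That local-structure argument is the paper's Step 2.1 and is the real content of the proof; your sketch asserts its conclusion without supplying it.
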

\paragraph{Remarks:} 
\begin{itemize}
\item[(i)] As being mentioned in Lemma \ref{lemma:lower_bound_modified_Hellinger_Wasserstein}, condition (M.1) is sufficient to guarantee that $C_{*,1}(\sigma_{1})>0$.
\item[(ii)] Conditions (M.2) and (M.3) are mild. An easy example is when $f$ is Gaussian kernel and $K$ is standard Gaussian kernel.
\item[(iii)] When $f_{0}$ is indeed a finite mixture of $f$, a close investigation of the proof of Theorem \ref{theorem:convergence_rate_mixing_measure_misspecified_strong} reveals that we can relax condition (M.2) and (M.3) for the conclusion of this theorem to hold.
\item[(iv)] Under the setting that $\left\{f*K_{\sigma_{1}}\right\}=\left\{f_{0}
*K_{\sigma_{0}}\right\}$, i.e., $G_{*} \equiv G_{0}$, the result of Theorem 
\ref{theorem:convergence_rate_mixing_measure_misspecified_strong} implies that $
\widehat{G}_{n}$ converges to the true mixing measure $G_{0}$ at optimal rate $n^{-1/
2}$ even though we are under the misspecified kernel setting.
\end{itemize}

\subsubsection{Infinite $k_{*}$:} \label{Section:infinite_component}
So far, we have assumed that $k_{*}$ has finite number of support points and achieve the 
cherished convergence rate $n^{-1/2}$ of $\widehat{G}_{n}$ to unique element $G_{*} 
\in \mathcal{M}_{k_{*}}$ under certain conditions on $f, f_{0}$, and $K$. It is due to the 
fact that $\widehat{m}_{n} \to k_{*}<\infty$ almost surely, which is eventually a 
consequence of the identifibility of kernel density function $f*K_{\sigma_{1}}$. However, 
for the setting $k_{*}=\infty$, to establish the consistency of $\widehat{m}_{n}$, we 
need to resort to a slightly stronger version of identifiability, which is finitely identifiable 
condition. We adapt Definition 3 in \cite{Nguyen-13} as follows.
\begin{definition} \label{definition:finite_identifiable}
The family $\left\{f(x|\theta),\theta \in \Theta\right\}$ is finitely identifiable if for any 
$G_{1} \in \mathcal{G}$ and $G_{2} \in \overline{\mathcal{G}}$, $|p_{G_{1},f}(x)-
p_{G_{2},f}(x)|=0$ for almost all $x \in \mathcal{X}$ implies that $G_{1} \equiv 
G_{2}$.
\end{definition}
An example of finite identifiability is when $f$ is Gaussian kernel with both location and 
variance parameter. Now, a close investigation of the proof of Theorem 
\ref{theorem:convergence_rate_mixing_measure_misspecified_strong} quickly yields the 
following result.
\begin{proposition}\label{proposition:infinite_mixture_complexity_misspecified_kernel}
Given $\sigma_{1}>0$ such that $f*K_{\sigma_{1}}$ is finitely identifiable. If $k_{*}=\infty$, we achieve $\widehat{m}_{n} \to \infty$ almost surely.
\end{proposition}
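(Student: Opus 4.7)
Define $h^{*}_{m} := \inf_{G \in \mathcal{O}_{m}} h(p_{G,f}*K_{\sigma_{1}}, p_{G_{0},f_{0}}*K_{\sigma_{0}})$ and $h_{n,m} := h(p_{\widehat{G}_{n,m},f}*K_{\sigma_{1}}, P_{n}*K_{\sigma_{0}})$. The plan is to show that for each fixed integer $M$, eventually almost surely $h_{n,m} - h_{n,m+1} > C_{n} n^{-1/2}$ holds for every $m \leq M$ simultaneously, which rules out $\widehat{m}_{n} \leq M$ and thus gives $\widehat{m}_{n} \to \infty$ a.s. The argument reduces to two claims: (a) the limiting strict gap $h^{*}_{M} > h^{*}_{M+1}$ for every finite $M$, and (b) $h_{n,m} \to h^{*}_{m}$ a.s.\ for each fixed $m$.

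Step (a) is the crux and parallels the proofs of Lemma~\ref{lemma:key_inequality_misspecified_setting} and Lemma~\ref{lemma:key_property_non_unique} but applied at a constrained minimizer. Suppose for contradiction that $h^{*}_{M} = h^{*}_{M+1}$, and let $G^{\dagger} \in \mathcal{O}_{M}$ attain $h^{*}_{M}$ (existence via compactness of $\Theta$ and weak continuity of $G \mapsto p_{G,f}*K_{\sigma_{1}}$). Since $\mathcal{O}_{M} \subset \mathcal{O}_{M+1}$, the measure $G^{\dagger}$ also minimizes over $\mathcal{O}_{M+1}$, and the one-parameter family $G_{\epsilon} := (1-\epsilon) G^{\dagger} + \epsilon \delta_{\theta}$ stays in $\mathcal{O}_{M+1}$ for every $\theta \in \Theta$. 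Imposing that the right-derivative at $\epsilon = 0^{+}$ of $h^{2}(p_{G_{\epsilon},f}*K_{\sigma_{1}}, p_{G_{0},f_{0}}*K_{\sigma_{0}})$ be nonnegative and integrating the resulting inequality in $\theta$ against $G_{*}$ yields
\begin{equation*}
\int \frac{p_{G_{*},f}*K_{\sigma_{1}}(x)\,\sqrt{p_{G_{0},f_{0}}*K_{\sigma_{0}}(x)}}{\sqrt{p_{G^{\dagger},f}*K_{\sigma_{1}}(x)}}\,dx \leq 1 - (h^{*}_{M})^{2}.
\end{equation*}
Combined with the identity $\int \sqrt{p_{G^{\dagger},f}*K_{\sigma_{1}} \cdot p_{G_{0},f_{0}}*K_{\sigma_{0}}}\,dx = 1 - (h^{*}_{M})^{2}$, a Cauchy--Schwarz step in the spirit of Lemma~\ref{lemma:key_property_non_unique} gives $(1 - h_{*}^{2})^{2} \leq (1 - (h^{*}_{M})^{2})^{2}$, where $h_{*} := h(p_{G_{*},f}*K_{\sigma_{1}}, p_{G_{0},f_{0}}*K_{\sigma_{0}})$. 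Since $h_{*} \leq h^{*}_{M}$ trivially, equality is forced; the equality case of Cauchy--Schwarz together with the finite identifiability of $f*K_{\sigma_{1}}$ then yields $G^{\dagger} \equiv G_{*}$, which is impossible because $G^{\dagger}$ has at most $M$ atoms while $k_{*} = \infty$ forces $G_{*}$ to have infinitely many.

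For (b), the triangle inequality gives $|h_{n,m} - h^{*}_{m}| \leq h(P_{n}*K_{\sigma_{0}}, p_{G_{0},f_{0}}*K_{\sigma_{0}})$, and the right-hand side converges to $0$ almost surely for any fixed $\sigma_{0} > 0$ by the standard $L^{1}$-consistency of the kernel density estimator (which dominates the squared Hellinger distance). Combining (a) and (b), for each fixed $M$ one has $h_{n,m} - h_{n,m+1} \to h^{*}_{m} - h^{*}_{m+1} > 0$ for every $m \leq M$, which eventually exceeds $C_{n} n^{-1/2} \to 0$; hence $\widehat{m}_{n} > M$ almost surely for large $n$, and since $M$ is arbitrary, $\widehat{m}_{n} \to \infty$ a.s. The main obstacle is step (a): the first-order condition at the constrained minimizer $G^{\dagger}$ provides only a one-sided integral inequality, and it takes the Cauchy--Schwarz manipulation of Lemma~\ref{lemma:key_property_non_unique} together with finite identifiability to convert this into the desired contradiction between a finitely supported $G^{\dagger}$ and an infinitely supported $G_{*}$.
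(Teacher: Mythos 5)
Your proposal is correct and follows essentially the same route as the paper, which proves this proposition by pointing back to Step 1 of the proof of Theorem \ref{theorem:convergence_rate_mixing_measure_misspecified_strong}: the first-order perturbation $(1-\epsilon)G^{\dagger}+\epsilon\delta_{\theta}$ at the constrained minimizer, integration of the resulting inequality against $G_{*}$, the affinity/Cauchy--Schwarz manipulation of Lemma \ref{lemma:key_property_non_unique} forcing $p_{G^{\dagger},f}*K_{\sigma_{1}}=p_{G_{*},f}*K_{\sigma_{1}}$ a.e., and finite identifiability to contradict the cardinality of the supports. Your use of Cauchy--Schwarz in place of the paper's expansion of $\int(\sqrt{a}-\sqrt{b})^{2}\sqrt{c/a}\,dx$ is an equivalent rewriting, and the reduction via $|h_{n,m}-h^{*}_{m}|\leq h(P_{n}*K_{\sigma_{0}},p_{G_{0},f_{0}}*K_{\sigma_{0}})\to 0$ a.s.\ is exactly the paper's Step 1 mechanism.
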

Even though we achieve the consistency result of $\widehat{m}_{n}$ when $k_{*}=\infty
$, the convergence rate of $\widehat{G}_{n}$ to $G_{*}$ still remains an elusive problem. 
However, an important insight from Proposition \ref{proposition:infinite_mixture_complexity_misspecified_kernel} indicates that the 
convergence rate of $\widehat{G}_{n}$ to some element $G_{*} \in \mathcal{M}_{\infty}
$ may be much slower than $n^{-1/2}$ when $k_{*}=\infty$. It is due to the fact that 
both $\widehat{G}_{n}$ and $G_{*} \in \mathcal{M}_{\infty}$ have unbounded numbers 
of components in which the kind of bound in Lemma 
\ref{lemma:lower_bound_modified_Hellinger_Wasserstein} is no longer sufficient.
Instead, something akin to the bounds given in Theorem 2 of \cite{Nguyen-13} in the misspecified
setting is required. We leave 
the detailed analyses of $\widehat{G}_{n}$ under that setting of $k_{*}$ for the future 
work.
\subsection{Comparison to WS Algorithm} \label{Section:MS_algorithm}
In the previous sections, we have established a careful study regarding the behaviors of $
\widehat{G}_{n}$ in Algorithm 1, i.e., we achieved the consistency of the number of 
components as well as the convergence rates of parameter estimation under various 
settings of $f$ and $f_{0}$ when the bandwidths $\sigma_{1}$ and $\sigma_{0}$ are 
fixed. As we mentioned at the beginning of Section \ref{Section:minimum_Hellinger}, 
Algorithm 1 is the generalization of WS Algorithm when $\sigma_{1}=0$ and $\sigma_{0}
>0$. Therefore, the general results with estimator $\widehat{G}_{n}$ in Theorem 
\ref{theorem:convergence_rate_mixing_measure_misspecified_strong} are still applicable 
to $\overline{G}_{n}$ under that special case of $\sigma_{1}$ and $\sigma_{0}$. To 
rigorously demonstrate the flexibilities and advantages of our estimator $\widehat{G}_{n}
$ over WS estimator $\overline{G}_{n}$, we firstly discuss the behaviors of estimator $
\overline{G}_{n}$ from WS Algorithm under the well-specified kernel setting, i.e., $\left\{f
\right\}=\left\{f_{0}\right\}$, and the fixed bandwidth setting of $\sigma_{0}$. 
Remember that $f_{0}$ is assumed to be identifiable in the first order and to have uniform 
Lipschitz property up to the first order. Assume now we can find
\begin{eqnarray}
\overline{G}_{0} := \mathop {\arg \min} \limits_{G \in \overline{\mathcal{G}}}{h(p_{G,f_{0}},p_{G_{0},f_{0}}*K_{\sigma_{0}})}, \nonumber
\end{eqnarray}
i.e., $\overline{G}_{0}$ is the discrete mixing measure that minimizes the Hellinger
distance between $p_{G,f_{0}}$ and $p_{G_{0},f_{0}}*K_{\sigma_{0}}$. Note that, $
\overline{G}_{0}$ is a special case of $G_{*}$ when $\left\{f\right\} = \left\{f_{0}\right\}$ and $\sigma_{1}=0$. The form of $\overline{G}_{0}$ can be determined explicitly 
under various settings of $f_{0}$ and $K$. For instance, assume that $f_{0}$ are either 
univariate Gaussian kernel or Cauchy kernel with parameters $\theta=(\eta,\tau)$ where $
\eta$ and $\tau$ are location and variance parameter and $K$ are either standard 
univariate Gaussian kernel or Cauchy kernel respectively. Then, a simple calculation shows 
that $\overline{G}_{0}=\sum \limits_{i=1}^{k_{0}}{p_{i}^{0}\delta_{(\theta_{i}^{0},
\overline{\tau}_{i}^{0})}}$ where $\overline{\tau}_{i}^{0}=\sqrt{(\tau_{i}^{0})^{2}+
\sigma_{0}^{2}}$ for any $1 \leq i \leq k_{0}$ and $\sigma_{0}>0$.

As being argued in Section \ref{Section:adaptiveminimumdistancemisspecified}, $
\overline{G}_{0}$ may have infinite number of components in general; however, for the 
sake of simplicity, we assume that there exists $\overline{G}_{0}$ having finite number of 
components, which is also unique according to the argument in Section 
\ref{Section:adaptiveminimumdistancemisspecified}. Under the assumptions of Theorem 
\ref{theorem:convergence_rate_mixing_measure_misspecified_strong} when $
\sigma_{1}=0$, we eventually achieve that
\begin{eqnarray}
W_{1}(\overline{G}_{n},\overline{G}_{0})=  O_{p}\biggr(\sqrt{\dfrac{\overline{M}^{2}\Psi(G_{0},\sigma_{0})}{[\overline{C}]^{4}}}n^{-1/2}
\biggr) \nonumber
\end{eqnarray}
where $\overline{C} := \inf \limits_{G \in \mathcal{O}_{\overline{k}_{0}}}
{\dfrac{h^{*}(p_{G,f_{0}},p_{\overline{G}_{0},f_{0}})}{W_{1}(G,\overline{G}_{0})}}$ 
and $\overline{M}$ is some positive constant. The above result implies that the estimator 
$\overline{G}_{n}$ from WS Algorithm will not converge to the true mixing measure 
$G_{0}$ for any fixed bandwith $\sigma_{0}$. It demonstrates that Algorithm 1 is more 
appealing than WS Algorithm under the well-specified kernel setting with fixed bandwidth $
\sigma_{0}>0$. For the setting when the bandwidth $\sigma_{0}$ is allowed to vanish to 
0, our result indicates that the convergence rate of $\overline{G}_{n}$ to $G_{0}$ will 
depend not only on the vanishing rate of the term $\Psi(G_{0},\sigma_{0})$ to 0 but also 
on the convergence rate of $\overline{G}_{0}$ to $G_{0}$. Intuitively, to ensure that the 
convergence of $\overline{G}_{n}$ to $G_{0}$ is $n^{-1/2}$, we also need to achieve 
that of $\overline{G}_{0}$ to $G_{0}$ to be $n^{-1/2}$. Under the specific case that 
$f_{0}$ and $K$ are univariate Gaussian kernels, the convergence rate of $\overline{G}
_{0}$ to $G_{0}$ is $n^{-1/2}$ only when $\sigma_{0}$ goes to 0 at the same rate 
$n^{-1/2}$. However, it will lead to a strong convergence of $\Psi(G_{0},\sigma_{0})$ to $
\infty$, which makes the convergence rate of $\overline{G}_{n} \to G_{0}$ become much 
slower than $n^{-1/2}$. Therefore, it is possible that the convergence rate of WS 
estimator $\overline{G}_{n}$ to $G_{0}$ may be much slower than $n^{-1/2}$ regardless 
of the choice of bandwidth $\sigma_{0}$. As a consequence, our estimator in Algorithm 1 
may also be more efficient than WS estimator under that regime of vanishing bandwidth $\sigma_{0}$.

Under the misspecified kernel setting, we would like to emphasize that our estimator $
\widehat{G}_{n}$ is also more flexible than WS estimator $\overline{G}_{n}$ as we 
provide more freedom with the choice of bandwidth $\sigma_{1}$ in Algorithm 1, instead 
of specifically fixing $\sigma_{1}=0$ as that in WS Algorithm. If there exists $\sigma_{1}
>0$ such that $\left\{f*K_{\sigma_{1}}\right\}=\left\{f_{0}*K_{\sigma_{0}}\right\}$, 
then our estimator $\widehat{G}_{n}$ will converge to $G_{0}$ while WS estimator $
\overline{G}_{n}$ will converge to $\overline{G}_{0}$ that can be very different from 
$G_{0}$. Therefore, the performance of our estimator is also better than that of WS 
estimator under that specific misspecified kernel setting.

\section{Different approach with minimum Hellinger distance estimator}
\label{Section:Another_approach}
Thus far, we have developed a robust estimator of mixing measure $G_{0}$ 
based on the idea of minimum Hellinger distance estimator and model selection criteria. 
That estimator is shown to attain various desirable properties, including the consistency of 
number of components $\widehat{m}_{n}$ and the optimal convergence rates of $
\widehat{G}_{n}$. In this section, we take a rather different approach of constructing such 
robust estimator. In fact, we have the following algorithm:
\paragraph{Algorithm 2:}
 \begin{itemize}
\item[•] Step 1: Determine $\widehat{G}_{n,m}=\mathop {\arg \min} \limits_{G \in \mathcal{O}_{m}}{h(p_{G,f}*K_{\sigma_{1}},P_{n}*K_{\sigma_{0}})}$ for any $n,m \geq 1$.
\item[•] Step 2: Choose
\begin{eqnarray}
\widetilde{m}_{n}=\mathop {\inf }{\biggr\{m \geq 1: h(p_{\widehat{G}_{n,m},f}*K_{\sigma_{1}},P_{n}*K_{\sigma_{0}})< \epsilon\biggr\}}, \nonumber
\end{eqnarray}
where $\epsilon>0$ is any given positive constant and $\sigma_{1}, \sigma_{0}$ are two chosen bandwidths.
\item[•] Step 3: Let $\widetilde{G}_{n}=\widehat{G}_{n,\widetilde{m}_{n}}
$ for each $n$.
\end{itemize}
 Unlike Step 2 in Algorithm 1 where we consider the difference between $h(p_{\widehat{G}_{n,m},f}*K_{\sigma_{1}},P_{n}*K_{\sigma_{0}})$ and $h(p_{\widehat{G}
_{n,m+1},f}*K_{\sigma_{1}},P_{n}*K_{\sigma_{0}})$, here we consider solely the 
evaluation of $h(p_{\widehat{G}_{n,m},f}*K_{\sigma_{1}},P_{n}*K_{\sigma_{0}})$ in 
Algorithm 2. The above robust estimator of mixing measure is based on the idea of 
minimum Hellinger distance estimator and superefficiency phenomenon. 
A related approach considered in the well-specified setting was taken by \cite{Jonas-2016}.
Their construction was based on minimizing supremum norm based distance,
without using the convolution kernels 
$K_{\sigma_{1}}$ and $K_{\sigma_{0}}$; moreover, the threshold $\epsilon$ was set to vanish
as $n \to \infty$. Although of theoretical interest, their estimator appears difficult to 
compute efficiently and may be unstable due to the use of the supremum norm.

Our focus with Algorithm 2 in this section will be mainly about its attractive theoretical 
performance. As we observe from Algorithm 2, the values of $f, f_{0}$, $K$, and $G_{0}$ 
along with the bandwidths $\sigma_{1}, \sigma_{0}$ play crucial roles in determining the 
convergence rate of $\widetilde{G}_{n}$ to $G_{0}$ for any given $\epsilon>0$. Similar 
to the argument of Theorem \ref{theorem:convergence_rate_mixing_measure} and 
Theorem \ref{theorem:convergence_rate_mixing_measure_misspecified_strong}, one of 
the key ingredients to fulfill that goal is to find the conditions of these factors such that we 
obtain the consistency of $\widetilde{m}_{n}$. The following
theorem yields the sufficient and necessary conditions to address the consistency question.
\begin{theorem} \label{theorem:sufficent_necessary_condition}
Given $\sigma_{1} \geq 0$ and $\sigma_{0}>0$. Then, we have
\begin{itemize}
\item [(i)] Under the well-specified kernel setting and the case that $\sigma_{1}=\sigma_{0}$, $\widetilde{m}_{n} \to k_{0}$ almost surely if and only if
\begin{eqnarray}
\epsilon< h(p_{G_{0,k_{0}-1},f_{0}}*K_{\sigma_{0}},p_{G_{0},f_{0}}*K_{\sigma_{0}}) \label{eqn:another_wellspecified_kernel}
\end{eqnarray}
where $G_{0,k_{0}-1}=\mathop {\arg \min}\limits_{G \in \mathcal{E}_{k_{0}-1}}{h(p_{G,f_{0}}*K_{\sigma_{0}},p_{G_{0},f_{0}}*K_{\sigma_{0}})}$.
\item [(ii)] Under the misspecified kernel setting, if $k_{*}<\infty$, then $\widetilde{m}_{n} \to k_{*}$ almost surely if and only if
\begin{eqnarray}
h(p_{G_{*},f}*K_{\sigma_{1}},p_{G_{0},f_{0}}*K_{\sigma_{0}}) \leq \epsilon< h(p_{G_{*,k_{*}-1},f}*K_{\sigma_{1}},p_{G_{0},f_{0}}*K_{\sigma_{0}}) \label{eqn:another_misspeficied_kernel}
\end{eqnarray}
where $G_{*,k_{*}-1}=\mathop {\arg \min}\limits_{G \in \mathcal{E}_{k_{*}-1}}{h(p_{G,f}*K_{\sigma_{1}},p_{G_{0},f_{0}}*K_{\sigma_{0}})}$ and $G_{*} \in \mathcal{M}$ with exactly $k_{*}$ components.
\end{itemize}
\end{theorem}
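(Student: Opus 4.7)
The overall plan is to reduce the question of consistency of $\widetilde{m}_n$ to the comparison of $\epsilon$ against the limiting values
$$d_m := \inf_{G \in \mathcal{O}_m} h(p_{G,f}*K_{\sigma_1},\, p_{G_0,f_0}*K_{\sigma_0}), \qquad m \geq 1,$$
which is a non-increasing sequence. The centerpiece of the proof is the almost sure limit
$$h(p_{\widehat{G}_{n,m},f}*K_{\sigma_1},\, P_n*K_{\sigma_0}) \longrightarrow d_m \quad \text{a.s., for each fixed } m \geq 1.$$
To prove this, apply the triangle inequality for $h$ to the infimum defining $\widehat{G}_{n,m}$: since $d_m$ is an infimum of the same family of Hellinger quantities with $p_{G_0,f_0}*K_{\sigma_0}$ in place of $P_n*K_{\sigma_0}$, one obtains
$$\bigl| h(p_{\widehat{G}_{n,m},f}*K_{\sigma_1}, P_n*K_{\sigma_0}) - d_m \bigr| \leq h(P_n*K_{\sigma_0},\, p_{G_0,f_0}*K_{\sigma_0}).$$
The right-hand side tends to $0$ almost surely: pointwise, $P_n*K_{\sigma_0}(x) \to p_{G_0,f_0}*K_{\sigma_0}(x)$ a.s. by the SLLN applied to $x \mapsto K_{\sigma_0}(x-X_i)$, and a routine Scheffé / dominated convergence argument upgrades this to $L_1$, hence Hellinger, convergence for any fixed bandwidth $\sigma_0 > 0$.

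Given this limit, everything reduces to comparing $\epsilon$ with the $d_m$'s. In part (i), $G_0 \in \mathcal{E}_{k_0} \subset \mathcal{O}_{k_0}$ gives $d_{k_0}=0$, and the first-order identifiability of $f_0*K_{\sigma_0}$ combined with the arguments of Section \ref{Section:prelim} yields $d_{k_0-1} = h(p_{G_{0,k_0-1},f_0}*K_{\sigma_0},\, p_{G_0,f_0}*K_{\sigma_0}) > 0$ with the infimum attained since $\Theta$ is compact. In part (ii), the identifiability of $f*K_{\sigma_1}$ guarantees that $d_m = d_{k_*}$ for all $m \geq k_*$ with the unique minimizer $G_*$, and $d_{k_*-1}$ is realized at $G_{*,k_*-1}$ with $d_{k_*-1} > d_{k_*}$.

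Sufficiency then follows by splitting on $m$: for $m < k_*$ (resp.\ $m < k_0$) the limit $d_m \geq d_{k_*-1} > \epsilon$ forces $h(p_{\widehat{G}_{n,m},f}*K_{\sigma_1}, P_n*K_{\sigma_0}) > \epsilon$ eventually, so $\widetilde{m}_n \geq k_*$; whereas at $m=k_*$ the limit is $d_{k_*} \leq \epsilon$, so with monotonicity of the Hellinger distances in $m$ we get $\widetilde{m}_n \leq k_*$ eventually, and hence $\widetilde{m}_n = k_*$ a.s. Necessity is the contrapositive: if $\epsilon \geq d_{k_*-1}$ then $h(p_{\widehat{G}_{n,k_*-1},f}*K_{\sigma_1},P_n*K_{\sigma_0}) \to d_{k_*-1} \leq \epsilon$ and one argues $\widetilde{m}_n \leq k_*-1$ frequently enough to preclude $\widetilde{m}_n \to k_*$; if $\epsilon < d_{k_*}$ then for every $m \leq k_*$ the limit exceeds $\epsilon$, so $\widetilde{m}_n > k_*$ eventually.

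The main obstacle is the equality boundary $\epsilon = d_{k_*}$ in part (ii), where the limit $h(p_{\widehat{G}_{n,k_*},f}*K_{\sigma_1},P_n*K_{\sigma_0}) \to \epsilon$ does not directly give the strict inequality $< \epsilon$ needed in the definition of $\widetilde{m}_n$. The resolution is to exploit the fact that $\widehat{G}_{n,k_*}$ is the \emph{minimizer} over $\mathcal{O}_{k_*}$, so $h(p_{\widehat{G}_{n,k_*},f}*K_{\sigma_1},P_n*K_{\sigma_0}) \leq h(p_{G_*,f}*K_{\sigma_1},P_n*K_{\sigma_0})$; combined with the lower bound $h(p_{\widehat{G}_{n,k_*},f}*K_{\sigma_1},P_n*K_{\sigma_0}) \geq d_{k_*} - h(P_n*K_{\sigma_0}, p_{G_0,f_0}*K_{\sigma_0})$ obtained from the reverse triangle inequality, one controls the gap by $o(1)$ a.s.\ and closes the argument. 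In part (i) the analogous boundary does not occur because $d_{k_0}=0 < \epsilon$ automatically.
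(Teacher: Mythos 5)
Your proposal follows essentially the same route as the paper's proof: establish the almost sure convergence $h(p_{\widehat{G}_{n,m},f}*K_{\sigma_1},P_n*K_{\sigma_0})\to d_m$ for each fixed $m$ (the paper gets this from $h(P_n*K_{\sigma_0},p_{G_0,f_0}*K_{\sigma_0})\to 0$ a.s., exactly as you do via the triangle inequality), invoke the Leroux-style perturbation/identifiability argument of Step 1 of Theorem \ref{theorem:convergence_rate_mixing_measure_misspecified_strong} to get the strict gap $d_{k_*}<d_{k_*-1}$ with minimizers attained in $\mathcal{E}_m$, and then read off the condition $d_{k_*}\le\epsilon<d_{k_*-1}$. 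One caveat: your claimed resolution of the boundary case $\epsilon=d_{k_*}$ does not actually work --- the minimizer bound and the reverse triangle inequality only pin $h(p_{\widehat{G}_{n,k_*},f}*K_{\sigma_1},P_n*K_{\sigma_0})$ to within $o(1)$ of $\epsilon$ and cannot decide whether the strict inequality $<\epsilon$ in Step 2 of Algorithm 2 eventually holds; the paper's own proof is silent on this boundary case, so you are no worse off, but you should not present that step as closing the argument.
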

If we allow $\epsilon\to 0$ in Algorithm 2, we achieve the inconsistency of $\widetilde{m}
_{n}$ under the misspecified kernel setting when $k_{*}<\infty$. Hence, the choice of 
threshold $\epsilon$ from \cite{Jonas-2016} is not optimal regarding the misspecified 
kernel setting. Unfortunately, conditions \eqref{eqn:another_wellspecified_kernel} and
\eqref{eqn:another_misspeficied_kernel} are rather cryptic as in general, it is hard to
determine the exact formulation of $G_{0,k_{0}-1}$, $G_{*,k_{*}-1}$, and $G_{*}$.
It would be of interest to find relatively simple sufficient conditions on 
$f, f_{0}$, $K$, $G_{0}$, $\sigma_{1}$, and $\sigma_{0}$ according to which
either  \eqref{eqn:another_wellspecified_kernel} or
\eqref{eqn:another_misspeficied_kernel}  holds.
Unfortunately, this seems to be a difficult task in the mis-specified setting. Under
the well-specified kernel setting, a sufficient condition for \eqref{eqn:another_wellspecified_kernel} can be reformulated as 
a condition regarding the lower bound on the
smallest mass of $G_{0}$, the minimal distance between its point masses, and the lower bound between the Hellinger distance and Wasserstein distance:
\begin{proposition} \label{proposition:sufficient_condition_wellspecified_setting} {\bf(Well-speficied kernel setting)}
For any given $\sigma_{0}>0$, assume that $f_{0}*K_{\sigma_{0}}$ admits uniform 
Lipschitz property up to the first oder and is identifiable. If we have
\begin{eqnarray}
\mathop {\inf }\limits_{G \in \mathcal{E}_{k_{0}-1}}{\dfrac{h(p_{G,f_{0}}*K_{\sigma_{0}},p_{G_{0},f_{0}}*K_{\sigma_{0}})}{W_{1}(G,G_{0})}} \mathop {\min }\limits_{1 \leq i \leq k_{0}}{p_{i}^{0}} \mathop {\min }\limits_{1 \leq i \neq j \leq k_{0}} \|\theta_{i}^{0}-\theta_{j}^{0}\| \geq \epsilon, \nonumber 
\end{eqnarray}
then we obtain the inequality in \eqref{eqn:another_wellspecified_kernel}.
\end{proposition}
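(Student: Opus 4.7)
The plan is to reduce \eqref{eqn:another_wellspecified_kernel} to a purely geometric lower bound on $W_1(G_{0,k_0-1},G_0)$ and then verify that bound by a pigeonhole argument that exploits the strict drop from $k_0$ to $k_0-1$ atoms. By construction $G_{0,k_0-1}\in\mathcal{E}_{k_0-1}$, so substituting it into the infimum appearing in the hypothesis yields
\begin{align*}
h\bigl(p_{G_{0,k_0-1},f_0}*K_{\sigma_0},\,p_{G_0,f_0}*K_{\sigma_0}\bigr)\;\geq\;\inf_{G\in\mathcal{E}_{k_0-1}}\frac{h(p_{G,f_0}*K_{\sigma_0},p_{G_0,f_0}*K_{\sigma_0})}{W_1(G,G_0)}\,W_1(G_{0,k_0-1},G_0).
\end{align*}
Combining this with the assumed lower bound on the product of the three quantities, it suffices to prove that for every $G\in\mathcal{E}_{k_0-1}$,
\begin{align*}
W_1(G,G_0)\;\gtrsim\;\min_{1\leq i\leq k_0}p_i^0\cdot\min_{1\leq i\neq j\leq k_0}\|\theta_i^0-\theta_j^0\|,
\end{align*}
where the implicit multiplicative constant (which will come out to be $1/2$) can be absorbed into the hypothesis.

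For the geometric bound, write $G=\sum_{j=1}^{k_0-1}p'_j\delta_{\theta'_j}$ and, for each $j\in\{1,\dots,k_0-1\}$, pick $i(j)\in\mathop{\arg\min}_{1\leq i\leq k_0}\|\theta_i^0-\theta'_j\|$. The map $i(\cdot)\colon\{1,\dots,k_0-1\}\to\{1,\dots,k_0\}$ cannot be surjective, so there exists an index $i^*$ that is missed. For every $j$, the inequality $\|\theta_{i(j)}^0-\theta'_j\|\leq\|\theta_{i^*}^0-\theta'_j\|$ combined with the triangle inequality gives
\begin{align*}
2\|\theta_{i^*}^0-\theta'_j\|\;\geq\;\|\theta_{i^*}^0-\theta'_j\|+\|\theta_{i(j)}^0-\theta'_j\|\;\geq\;\|\theta_{i^*}^0-\theta_{i(j)}^0\|\;\geq\;\min_{1\leq i\neq j\leq k_0}\|\theta_i^0-\theta_j^0\|,
\end{align*}
so $\theta_{i^*}^0$ sits at distance at least $\tfrac12\min_{i\neq j}\|\theta_i^0-\theta_j^0\|$ from \emph{every} atom of $G$, uniformly in $j$. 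Consequently, for any coupling $\vec{q}\in\mathcal{Q}(\vec{p}^0,\vec{p}')$,
\begin{align*}
\sum_{i,j}q_{ij}\|\theta_i^0-\theta'_j\|\;\geq\;\sum_j q_{i^*,j}\|\theta_{i^*}^0-\theta'_j\|\;\geq\;\tfrac12\,p_{i^*}^0\min_{1\leq i\neq j\leq k_0}\|\theta_i^0-\theta_j^0\|,
\end{align*}
and taking the infimum over couplings, together with $p_{i^*}^0\geq\min_i p_i^0$, delivers the desired geometric bound.

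The main obstacle is this second step, because an optimal coupling may split the mass $p_{i^*}^0$ among several atoms of $G$, so a bound based on a single matched pair of atoms is unavailable. The pigeonhole selection of an unmatched atom $\theta_{i^*}^0$ circumvents this difficulty: it produces a lower bound on $\|\theta_{i^*}^0-\theta'_j\|$ that is uniform in $j$ and therefore survives an arbitrary distribution of the row masses $\{q_{i^*,j}\}_j$. An analogous argument for \eqref{eqn:another_misspeficied_kernel} in the mis-specified setting appears considerably harder, because $G_*$ is only implicitly defined and the corresponding minimal-gap and minimal-mass quantities for $G_*$ are not directly accessible---which is presumably why the paper isolates the simpler well-specified condition above.
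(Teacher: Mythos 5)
Your proposal is correct and follows essentially the same route as the paper: reduce \eqref{eqn:another_wellspecified_kernel} to the bound $h(p_{G_{0,k_{0}-1},f_{0}}*K_{\sigma_{0}},p_{G_{0},f_{0}}*K_{\sigma_{0}}) \geq C'\inf_{G \in \mathcal{E}_{k_{0}-1}}W_{1}(G,G_{0})$ and then prove $\inf_{G \in \mathcal{E}_{k_{0}-1}}W_{1}(G,G_{0}) \geq \tfrac{1}{2}\min_{i}p_{i}^{0}\min_{i \neq j}\|\theta_{i}^{0}-\theta_{j}^{0}\|$ by selecting an atom of $G_{0}$ unmatched by the nearest-neighbor map and using the triangle inequality plus the marginal constraint of the coupling, exactly as in the paper. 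The factor $1/2$ you flag is present in the paper's proof as well (it requires the product to be at least $2\epsilon/C'$), so your remark about absorbing the constant is an accurate reflection of the same minor discrepancy.
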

\comment{
{\color{blue} Unlike \eqref{eqn:another_wellspecified_kernel}, it is tricky to derive general and simple sufficient conditions for 
\eqref{eqn:another_misspeficied_kernel} to hold under the misspecified kernel setting due 
to the wide range of possibilities of $f$, $\sigma_{0}$, and $\sigma_{1}$, except 
some special cases of these parameters. For instance, when $\left\{f*K_{\sigma_{1}}\right\}
=\left\{f_{0}*K_{\sigma_{0}}\right\}$, we can 
quicky verify that condition \eqref{eqn:algorithm_2_simple_wellspecified_zero} is enough 
to guarantee that conditions \eqref{eqn:another_misspeficied_kernel} hold. Furthermore, when $f$ is a location-scale family and 
$f_{0}$ is a finite mixture of $f$ while $\sigma_{1}=\sigma_{0}>0$, i.e., $f_{0}(x)=\sum_{i=1}^{m}
{p_{i}^{*}f(x|\eta_{i}^{*},\tau_{i}^{*})}$ for some fixed positive integer $m$ and fixed 
pairwise distinct components $(p_{i}^{*},\eta_{i}^{*},\tau_{i}^{*})$ where $1 \leq i \leq 
m$, we can determine explicitly the formulation of $G_{*}$, which is the combinations 
of $G_{0}$ and $(p_{i}^{*},\eta_{i}^{*},\tau_{i}^{*})$ as $1 \leq i \leq m$. Then, we can 
check that as long as condition \eqref{eqn:algorithm_2_simple_wellspecified_zero} 
holds for the atoms and weights of $G_{*}$, which also directly translates to the 
conditions with $G_{0}$ and $(p_{i}^{*},\eta_{i}^{*},\tau_{i}^{*})$, we would have 
conditions \eqref{eqn:another_misspeficied_kernel} to hold. Beyond 
these special cases, we may need to impose further conditions to quantify the separation 
between $f$ and $f_{0}$ as well as the difference between $f*K_{\sigma_{1}}$ and 
$f_{0}*K_{\sigma_{0}}$ to understand both sides of conditions 
\eqref{eqn:another_misspeficied_kernel} sufficiently well. We leave that problem for 
future exploration.}
}
\comment{Before arriving at these conditions, we define the 
following norm $\|\cdot\|$ between any two classes of density functions $\left\{f_{1}(x|
\theta):\;\theta\in\Theta \right\}$ and $\left\{f_{2}(x|\theta):\;\theta\in\Theta \right\}$ as follows
\begin{eqnarray}
\|f_{1}-f_{2}\|=\mathop {\sup }\limits_{\theta \in \Theta}{\int {|f_{1}(x|\theta)-f_{2}(x|\theta)|}\textrm{d}x}. \nonumber
\end{eqnarray}
When $||f_{1}-f_{2}||=0$, for each $\theta \in \Theta$ we have $f_{1}(x|\theta)=f_{2}(x|
\theta)$ for almost all $x \in \mathcal{X}$. Therefore, when $\|f_{1}-f_{2}\|$ is very 
small, two families of density functions $f_{1}$ and $f_{2}$ will be very close. To obtain 
sufficient conditions for \eqref{eqn:another_misspeficied_kernel}, we further need the 
following definition regarding the \textit{distinguishability} of any two classes of density 
functions $\left\{f_{1}(x|\theta)\right\}$ and $\left\{f_{2}(x|\theta)\right\}$:
\begin{definition} \label{definition:idenfiability_kernel} Given any two classes of density 
functions $\left\{f_{i}(x|\theta),\theta \in \Theta \right\}$ where $1 \leq i \leq 2$. We 
say that $f_{1}$ and $f_{2}$ are \textbf{distinguishable} if and only if we have 
$h(p_{G_{1},f_{1}},p_{G_{2},f_{2}})>0$ for any finite discrete mixing measures 
$G_{1},G_{2}$ in $\Theta$.
\end{definition}
The high level idea of the distinguishability definition is to guarantee that any finite mixture 
with kernel $f_{1}$ cannot be rewritten as finite mixture with kernel $f_{2}$ and vice 
versa. This definition turns out to be satisfied by many choices of $f_{1}$ and $f_{2}$. 
Here, we have a simple example of these kernels.
\begin{example} \label{example:characterization_identifiability} If $f_{1}$ is location-scale 
univariate Gaussian family and $f_{2}$ is location-scale univariate Student's t family with 
fixed degree of freedom $\nu>1$, then $f_{1}$ and $f_{2}$ are distinguishable.
\end{example}
Equipped with distinguishability definition, we have the following sufficient conditions 
regarding the inequality in \eqref{eqn:another_misspeficied_kernel} under misspecified 
kernel setting:
\begin{proposition}\label{proposition:sufficient_condition_misspecified_setting} {\bf(Misspecified kernel setting)}
Given $\sigma_{1} \geq 0$ and $\sigma_{0}>0$. Assume that $K$ is chosen such that 
$f*K_{\sigma_{1}}$ and $f_{0}*K_{\sigma_{0}}$ are distinguishable and admit uniformly 
Lipschitz property up to the first order. If $k_{*} \leq k_{0}$, there exist positive constant 
$C_{1}$ depending only on $K$ and the range of $\sigma_{1}, \sigma_{0}$ and positive 
constant $C_{2}$ depending only on $f, f_{0}, G_{0}, K, \Theta$, $\sigma_{1}$, and $
\sigma_{0}$ such that as long as $\|f-f_{0}\|+|\sigma_{1}-\sigma_{0}| \leq 
C_{1}\epsilon^{2}$ and
\begin{eqnarray}
\mathop {\min }\limits_{1 \leq i \leq k_{0}}{p_{i}^{0}}\mathop {\min }\limits_{1 \leq i \neq j \leq k_{0}}{||\theta_{i}^{0}-\theta_{j}^{0}||} \geq C_{2}\epsilon, \nonumber
\end{eqnarray}
we obtain the inequalities in \eqref{eqn:another_misspeficied_kernel}.
\end{proposition}
{\color{red} Under distinguishability, shouldn't $k_*$ be generally infinite, and so the condition in the Prop 
can't hold?}
\paragraph{Remarks:}  
\begin{itemize}
\item[(i)] The distinguishability of $f_{0}*K_{\sigma_{1}}$ and $f*K_{\sigma_{0}}$ is needed for the following lower bound (cf. Lemma \ref{lemma:support_misspecified_setting} in Section \ref{Section:proof})
\begin{eqnarray}
h(p_{G,f}*K_{\sigma_{1}},p_{G_{0},f_{0}}*K_{\sigma_{0}}) \geq C_{2}'W_{1}(G,G_{0}) \nonumber
\end{eqnarray}
for any $G \in \mathcal{E}_{k_{*}-1}$ where $C_{2}'$ is some positive constant depending only on $f,f_{0},G_{0}, K, \Theta$, $\sigma_{1}$, and $\sigma_{0}$.
\item[(ii)] The assumption $k_{*} \leq k_{0}$ is admittedly very strong, and required for the 
proof technique of the 
proposition to go through. Indeed, from the lower bound in part (i), our proof relies on the 
evaluation of the quantity $\mathop {\inf }\limits_{G \in \mathcal{E}_{k_{*}-1}}{W_{1}
(G,G_{0})}$ as in the proof of Proposition 
\ref{proposition:sufficient_condition_wellspecified_setting}. If $k_{*} > k_{0}$, this 
quantity becomes zero, which is not informative to further lower bound the right hand side 
of the inequality in part (i). Therefore, the current technique employed in our proof of 
Proposition \ref{proposition:sufficient_condition_misspecified_setting} is not effective to 
deal with the setting $k_{*}>k_{0}$.
\end{itemize}}
\section{Non-standard settings}
\label{Section:minimum_Hellinger_singular}
In this section, we briefly demonstrate that our robust estimator in Algorithm 1 (similarly 
Algorithm 2) also achieves desirable convergence rates under non-standard 
settings. In particular, in the first setting, either $f_{0}$ or $f$ may 
not be identifiable in the first order. In the second setting, the true mixing measure $G_{0}
$ changes with the sample size $n$ and converges to some discrete distribution $
\widetilde{G}_{0}$ under $W_{1}$ distance.

\subsection{Singular Fisher information matrix}
The results in the previous sections are under the assumption that both the true kernel 
$f_{0}$ and the chosen kernel $f$ are identifiable in the first order. This is equivalent to 
the non-singularity of the Fisher information matrix of $p_{G_{0},f_{0}}$  and 
$p_{G_{*},f}$ when $G_{*} \in \mathcal{M}$, i.e., both $I(G_{0},f_{0})$ and $I(G_{*},f)$ 
are non-singular. Therefore, we achieve the cherished convergence rate $n^{-1/2}$ of $
\widehat{G}_{n}$. Unfortunately, these assumptions do not always hold. For instance, 
both the Gamma and skewnormal kernel are not identifiable in the first order \citep{Ho-Nguyen-Ann-16, Ho-Nguyen-Ann-17}. According to \cite{Azzalini-1996, Wiper-01}, these 
kernels are particularly useful for modelling various kinds of data: the Gamma kernel is used 
for modeling non-negative valued data and the skewnormal kernel is used for modeling 
asymmetric data. Therefore, it is worth considering the performance of our estimator in 
Algorithm 1 under the nonidentifiability in the first order of both kernels $f_{0}$ and $f$. 
Throughout this section, for the simplicity of the argument we consider only the well-specified kernel setting and the setting that $f_{0}$ may not be identifiable in the first 
order. Additionally, we also choose $\sigma_{1}=\sigma_{0}>0$. The argument for the 
misspecified kernel setting, the non-identifiability in the first order setting of either $f$ or $f_{0}$, 
and the general choices of $\sigma_{1}, \sigma_{0}$ can be argued in the similar fashion.

The non-identifiability in the first order of $f_{0}$ implies that the Fisher information 
matrix $I(G_{0},f_{0})$ of $p_{G_{0},f_{0}}$ is singular at particular values of $G_{0}$. 
Therefore, the convergence rate of $\widehat{G}_{n}$ to $G_{0}$ will be much slower 
than the standard convergence rate $n^{-1/2}$. In order to precisely determine the 
convergence rates of parameter estimation under the singular Fisher information matrix 
setting, \cite{Ho-Nguyen-Ann-17} introduced a notion of \textit{singularity level} of the 
mixing measure $G_0$ relative to the mixture model class; alternatively we say the
singularity level of Fisher information matrix $I(G_{0},f_{0})$ 
(cf. Definition 3.1 and Definition 3.3). Here, we 
briefly summarize the high level idea of singularity level according to the notations in our 
paper for the convenience of readers. In particular, we say that $I(G_{0},f_{0})$ admits $r
$-th level of singularity  relative to the ambient space $\mathcal{O}_{k_{0}}$ for $0 
\leq r < \infty$ if we have:
\begin{eqnarray}
&&\inf \limits_{G \in \mathcal{O}_{k_{0}}}{V(p_{G,f_{0}},p_{G_{0},f_{0}})/W_{s}^{s}(G,G_{0})}=0,\quad s=1,\dots,r. \nonumber
\\
&&V(p_{G,f_{0}},p_{G_{0},f_{0}}) \gtrsim W_{r+1}^{r+1}(G,G_{0}),\quad\text{for all } G \in \mathcal{O}_{k_{0}}.\label{eqn:inequality_singular_Fisher}
\end{eqnarray}
The infinite singularity level of the Fisher information matrix $I(G_{0},f_{0})$ implies that inequality \eqref{eqn:inequality_singular_Fisher} will not hold for any $r \geq 0$.
(Actually, these are consequences, not the original definition of singularity level
in \cite{Ho-Nguyen-Ann-17}, but this is sufficient for our purpose.)

When $f_{0}$ is identifiable in the first order, $I(G_{0},f_{0})$ will only have  
singularity level zero for all $G_{0} \in \mathcal{E}_{k_{0}}$, i.e., $r=0$ in 
\eqref{eqn:inequality_singular_Fisher}. However, the singularity levels of the Fisher 
information matrix $I(G_{0},f_{0})$ are generally not uniform over $G_{0}$ when 
$I(G_{0},f_{0})$ is singular. For example, when $f_{0}$ is skewnormal kernel, 
$I(G_{0},f_{0})$ will admit any level of singularity, ranging from 0 to $\infty$ 
depending on the interaction of atoms and masses of $G_{0}$ \citep{Ho-Nguyen-Ann-17}. 
The notion of singularity level allows us to establish precisely the convergence rate of any 
estimator of $G_{0}$. In fact, if $r<\infty$ is the singularity level of 
$I(G_{0},f_{0})$, for any estimation method that yields the
convergence rate $n^{-1/2}$ for $p_{G_{0},f_{0}}$ under the Hellinger distance, the induced
best possible rate of convergence for the mixing measure $G_{0}$ is $n^{-1/2(r+1)}$ under $W_{r+1}$
distance. If $r=\infty$ is the singularity level of $I(G_{0},f_{0})$, all the estimation 
methods will yield a non-polynomial convergence rate of $G_{0}$, one that is slower than 
$n^{-1/2s}$ for any $s \geq 1$.

Now, by using the same line of argument as that of Theorem 
\ref{theorem:convergence_rate_mixing_measure} we have the following result regarding 
the convergence rate of $\widehat{G}_{n}$ to $G_{0}$ when the Fisher information 
matrix $I(G_{0},f_{0})$ has $r$-th singularity level for some $r<\infty$.
\begin{proposition} \label{proposition:singularity_Fisher_information}
Given the well-specified kernel setting, i.e., $\left\{f\right\}=\left\{f_{0}\right\}$, and the 
choice that $\sigma_{1}=\sigma_{0}>0$. Assume that the Fisher information 
$I(G_{0},f_{0})$ has $r$-th singularity level where $r<\infty$ and condition (P.2) in 
Theorem \ref{theorem:convergence_rate_mixing_measure} holds, i.e., $\Psi(G_{0},
\sigma_{0})<\infty$. Furthermore, the kernel $K$ is chosen such that the Fisher 
information matrix $I(G_{0},f_{0}*K_{\sigma_{0}})$ has $r$-th singularity level and 
$f_{0}*K_{\sigma_{0}}$ admits a uniform Lipschitz property up to the $r$-th order. Then, 
we have
\begin{eqnarray}
W_{r+1}(\widehat{G}_{n},G_{0})=O_{p}\biggr(\sqrt{\dfrac{\Psi(G_{0},\sigma_{0})}{C_{r}^{2}(\sigma_{0})}}n^{-1/2(r+1)}\biggr) \nonumber
\end{eqnarray}
where $C_{r}(\sigma_{0})=\inf \limits_{G \in \mathcal{O}_{k_{0}}}
{\dfrac{h(p_{G,f_{0}}*K_{\sigma_{0}},p_{G_{0},f_{0}}*K_{\sigma_{0}})} {W_{r+1}^{r+1}(G,G_{0})}}$.
\end{proposition}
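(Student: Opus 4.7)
The plan is to adapt, almost line by line, the argument of Theorem \ref{theorem:convergence_rate_mixing_measure} by replacing the first order Hellinger--Wasserstein lower bound (inequality \eqref{eqn:comment_convergence_rate_wellspecify}) with its analogue of order $r+1$ that is supplied by the assumed singularity level of $I(G_0,f_0*K_{\sigma_0})$. The argument splits into four steps: (a) consistency of $\widehat{m}_n$, (b) a Hellinger rate for the smoothed empirical against the smoothed truth, (c) transfer of that rate to $\widehat{G}_n$ via the minimality in Step 1 of Algorithm 1, and (d) inversion of the $W_{r+1}$-lower bound.

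For step (a), observe that a finite singularity level $r<\infty$ at $G_0$ forces $f_0*K_{\sigma_0}$ to be identifiable at $G_0$ (in the classical sense), since otherwise the inequality $V(p_{G,f_0*K_{\sigma_0}},p_{G_0,f_0*K_{\sigma_0}})\gtrsim W_{r+1}^{r+1}(G,G_0)$ would fail. Hence part (i) of Theorem \ref{theorem:convergence_rate_mixing_measure} applies verbatim and gives $\widehat{m}_n\to k_0$ almost surely; in particular, eventually $\widehat{G}_n\in\Ocal_{k_0}$.

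For step (b), I would bound $h(P_n*K_{\sigma_0},p_{G_0,f_0}*K_{\sigma_0})$ by the $\chi^2$ divergence and use the fact that $P_n*K_{\sigma_0}$ is an unbiased estimator of $p_{G_0,f_0}*K_{\sigma_0}$ with pointwise variance bounded by $g(x|G_0,\sigma_0)/n$. This gives
\begin{align*}
\mathbb{E}\bigl[h^2(P_n*K_{\sigma_0},p_{G_0,f_0}*K_{\sigma_0})\bigr]\;\le\;\frac{1}{2n}\int\frac{g(x|G_0,\sigma_0)}{p_{G_0,f_0}*K_{\sigma_0}(x)}\,\mathrm dx\;=\;\frac{\Psi(G_0,\sigma_0)}{2n},
\end{align*}
so by Markov $h(P_n*K_{\sigma_0},p_{G_0,f_0}*K_{\sigma_0})=O_p\bigl(\sqrt{\Psi(G_0,\sigma_0)/n}\bigr)$. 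Step (c) is then immediate: on the event $\{\widehat{m}_n\ge k_0\}$ the optimality in Step 1 of Algorithm 1 gives $h(p_{\widehat G_n,f_0}*K_{\sigma_0},P_n*K_{\sigma_0})\le h(p_{G_0,f_0}*K_{\sigma_0},P_n*K_{\sigma_0})$, and a triangle inequality yields $h(p_{\widehat G_n,f_0}*K_{\sigma_0},p_{G_0,f_0}*K_{\sigma_0})=O_p\bigl(\sqrt{\Psi(G_0,\sigma_0)/n}\bigr)$.

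For step (d), I would invoke the assumption that $I(G_0,f_0*K_{\sigma_0})$ has singularity level $r$: this yields the total variation bound $V(p_{G,f_0*K_{\sigma_0}},p_{G_0,f_0*K_{\sigma_0}})\gtrsim W_{r+1}^{r+1}(G,G_0)$ for every $G\in\Ocal_{k_0}$, and since $h\ge V/\sqrt2$ we get $h(p_{G,f_0*K_{\sigma_0}},p_{G_0,f_0*K_{\sigma_0}})\gtrsim W_{r+1}^{r+1}(G,G_0)$ on the same class. In particular $C_r(\sigma_0)>0$. Because $\widehat G_n\in\Ocal_{k_0}$ eventually, combining with step (c) gives $W_{r+1}^{r+1}(\widehat G_n,G_0)\le h(p_{\widehat G_n,f_0}*K_{\sigma_0},p_{G_0,f_0}*K_{\sigma_0})/C_r(\sigma_0)$, and the claimed rate follows after rearrangement.

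The main obstacle is the transfer of singularity information from $I(G_0,f_0)$ to $I(G_0,f_0*K_{\sigma_0})$: the proposition assumes this transfer directly, but verifying it in practice (via a Fourier argument analogous to Lemma \ref{lemma:first_order_convolution}, together with a uniform Lipschitz condition of order $r$ on $f_0*K_{\sigma_0}$ to ensure the singularity level does not exceed $r$) is the delicate point. Once the singularity level of the convolved model is indeed $r$, the rest of the proof is a straightforward modification of Theorem \ref{theorem:convergence_rate_mixing_measure}, with the Fisher-nonsingularity-driven $W_1$ bound replaced by the singularity-$r$-driven $W_{r+1}^{r+1}$ bound.
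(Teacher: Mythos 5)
Your proposal is correct and follows essentially the same route the paper intends: the paper gives no separate proof of this proposition but states it follows "by the same line of argument" as Theorem \ref{theorem:convergence_rate_mixing_measure}, and your four steps reproduce exactly that argument (consistency of $\widehat{m}_{n}$, the $O_{p}(\sqrt{\Psi(G_{0},\sigma_{0})/n})$ Hellinger rate for $P_{n}*K_{\sigma_{0}}$, the triangle-inequality transfer to $\widehat{G}_{n}$, and inversion of the lower bound), with the only change being the replacement of the first-order bound $h\gtrsim W_{1}$ by the singularity-level bound $h\gtrsim V\gtrsim W_{r+1}^{r+1}$ on $\mathcal{O}_{k_{0}}$. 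Your observation that the assumed $r$-th singularity level of $I(G_{0},f_{0}*K_{\sigma_{0}})$ already supplies the identifiability (relative to $\mathcal{O}_{k_{0}}$) needed in the consistency step, and your rigorous $\chi^{2}$ bound in place of the paper's Taylor-expansion heuristic, are both sound refinements of the same argument.
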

\paragraph{Remarks:} 
\begin{itemize}
\item[(i)] A mild condition such that $I(G_{0},f_{0})$ and $I(G_{0},f_{0}*K_{\sigma})$ 
have the same singularity level is $\widehat{K}(t) \neq 0$ for all $t \in \mathbb{R}^{d}$ 
where $\widehat{K}(t)$ denotes the Fourier transformation of $K$ (cf. Lemma 
\ref{lemma:singularity_level} in Appendix B).
\item[(ii)] Examples of $f_{0}$ that are not identifiable in the first order and satisfy $
\Psi(G_{0},\sigma)<\infty$ are skewnormal and exponential kernel while $K$ is chosen to 
be Gaussian or exponential kernel respectively.
\item[(iii)] The result of Proposition \ref{proposition:singularity_Fisher_information} 
implies that under suitable choices of kernel $K$, our estimator in Algorithm 1 still achieves 
the best possible convergence rate for estimating $G_{0}$ even when the Fisher 
information matrix $I(G_{0},f_{0})$ is singular.
\end{itemize}

\subsection{Varying true parameters}

So far, our analysis has relied upon the assumption that $G_{0}$ is fixed as $n \to \infty$. 
However, there are situations such as in an asymptotic minimax analysis 
the true mixing measure $G_{0}$ is allowed to vary with $n$ and converge to some distribution $\widetilde{G}_{0}$ 
under $W_{1}$ distance as $n \to \infty$. In this section, we will demonstrate that our 
estimator in Algorithm 1 still achieves the optimal convergence rate 
under that setting of $G_{0}$.

Denote the number of components of $\widetilde{G}_{0}$  by $\widetilde{k}_{0}$. For 
the clarity of our argument we only work with the well-specified kernel setting and with the 
setting that $f_{0}$ is identifiable in the first order. As we have seen from the analysis of 
Section \ref{Section:well_specified_kernel}, when $G_{0}$ does not change with $n$, the 
key steps used to establish the standard convergence rate $n^{-1/2}$ of $\widehat{G}
_{n}$ to $G_{0}$ are through the combination of the convergence of $\widehat{m}_{n}$ 
to $k_{0}$ almost surely and, under the first order identifiability of $f_{0}
*K_{\sigma_{0}}$, the lower bound
\begin{eqnarray}
h(p_{G,f_{0}}*K_{\sigma_{0}},p_{G_{0},f_{0}}*K_{\sigma_{0}}) \gtrsim W_{1}(G,G_{0}) \label{eqn:inequality_fixed_G0}
\end{eqnarray}
holds for any $G \in \mathcal{O}_{k_{0}}$. Unfortunately, these two results no longer hold as 
$G_{0}$ varies with $n$. The varying $G_0$ is now  
denoted by $G_{0}^{n}$, the true mixing distribution when the sample size is $n$. 
Let $k_{0}^{n}$ be the number of components of $G_{0}^{n}$. Assume moreover that $
\mathop{\limsup }\limits_{n \to \infty}{k_{0}^{n}}=k<\infty$. We start with the following 
result regarding the convergence rate of $\widehat{m}_{n}$ under that setting of $G_{0}
^{n}$:

\begin{proposition} \label{proposition:varied_true_parameters}
Given $\sigma_{0}>0$, $\widehat{m}_n$ obtained by Algorithm 1. 
If $f_{0}*K_{\sigma_{0}}$ is identifiable, then $|\widehat{m}_{n}-k_{0}^{n}| \to 0$ almost surely as $n \to \infty$.
\end{proposition}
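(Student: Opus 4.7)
The plan mirrors the proof of Theorem \ref{theorem:convergence_rate_mixing_measure}(i), carried out in two one-sided parts. Since both $\widehat{m}_n$ and $k_0^n$ are integers bounded above by $\limsup k_0^n = k < \infty$, it suffices to show both $\widehat{m}_n \leq k_0^n$ and $\widehat{m}_n \geq k_0^n$ hold eventually almost surely; the conclusion $|\widehat{m}_n - k_0^n| \to 0$ a.s.\ follows.

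\emph{Upper bound} ($\widehat{m}_n \leq k_0^n$). Feasibility of $G_0^n \in \mathcal{O}_{k_0^n}$ and Step 1 of Algorithm 1 yield
$h(p_{\widehat{G}_{n,k_0^n},f_0}*K_{\sigma_0}, P_n*K_{\sigma_0}) \leq h(p_{G_0^n,f_0}*K_{\sigma_0}, P_n*K_{\sigma_0}).$
Because $P_n*K_{\sigma_0}$ is an unbiased estimator of $p_{G_0^n,f_0}*K_{\sigma_0}$, a standard variance calculation for the kernel-smoothed empirical measure (whose implicit constant depends only on $\|K_{\sigma_0}\|_\infty$ and a uniform $\Psi$-type bound, not on the specific mixing measure $G_0^n$) yields $h(P_n*K_{\sigma_0}, p_{G_0^n,f_0}*K_{\sigma_0}) = O_p(n^{-1/2})$. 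Combined with the monotonicity $h(p_{\widehat{G}_{n,m+1},f_0}*K_{\sigma_0}, P_n*K_{\sigma_0}) \leq h(p_{\widehat{G}_{n,m},f_0}*K_{\sigma_0}, P_n*K_{\sigma_0})$, the successive gap at $m = k_0^n$ is $O_p(n^{-1/2}) = o(C_n n^{-1/2})$ because $C_n \to \infty$. So the stopping criterion in Step 2 of Algorithm 1 is triggered no later than $m = k_0^n$, giving $\widehat{m}_n \leq k_0^n$.

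\emph{Lower bound} ($\widehat{m}_n \geq k_0^n$). For any $m < k_0^n$, the triangle inequality gives
\begin{equation*}
h(p_{\widehat{G}_{n,m},f_0}*K_{\sigma_0}, P_n*K_{\sigma_0}) \geq \inf_{G \in \mathcal{O}_{k_0^n - 1}} h(p_{G,f_0}*K_{\sigma_0}, p_{G_0^n,f_0}*K_{\sigma_0}) - O_p(n^{-1/2}).
\end{equation*}
It therefore suffices to show the first infimum is bounded below by some positive $c$ for all $n$ large, along each subsequence where $k_0^n$ takes a fixed value $k^*$. I would argue by contradiction: if the infimum vanished along a subsequence, compactness of $\Theta$ and of the mass simplex would yield limits $G_n \to G^* \in \mathcal{O}_{k^*-1}$ and $G_0^n \to \widetilde{G}_0$ in $W_1$, and dominated convergence (using the uniform Lipschitz property of $f_0*K_{\sigma_0}$) would pass to the limit inside the Hellinger integral to give $p_{G^*,f_0}*K_{\sigma_0} \equiv p_{\widetilde{G}_0, f_0}*K_{\sigma_0}$ almost everywhere. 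Identifiability of $f_0*K_{\sigma_0}$ then forces $G^* \equiv \widetilde{G}_0$, which contradicts $G^* \in \mathcal{O}_{k^*-1}$ provided $\widetilde{G}_0 \notin \mathcal{O}_{k^*-1}$, i.e., provided the atoms of $G_0^n$ do not coalesce in the $W_1$-limit.

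\emph{Main obstacle.} The delicate case is precisely when atoms of $G_0^n$ merge so that $\widetilde{G}_0$ has strictly fewer than $k^*$ atoms; then $\widetilde{G}_0 \in \mathcal{O}_{k^*-1}$ and the above infimum legitimately tends to zero, breaking the contradiction. To salvage the claim one must quantitatively compare the $W_1$-rate at which atoms of $G_0^n$ approach each other with the kernel-smoothed sampling rate $n^{-1/2}$: if atoms coalesce faster than sample size can resolve, then the components are statistically indistinguishable and $\widehat{m}_n$ can only track an effective smaller count. Controlling this interplay---either by an auxiliary hypothesis that the minimum separation of atoms of $G_0^n$ does not shrink too quickly, or by a refined analysis showing that the gap between consecutive Hellinger values at $m = k_0^n - 1$ still dominates $C_n n^{-1/2}$---is, in my view, where the main work of the proof concentrates.
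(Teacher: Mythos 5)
Your upper-bound half ($\widehat{m}_n \leq k_0^n$ eventually) is sound and is essentially the paper's own mechanism: since $G_0^n \in \mathcal{O}_{k_0^n}$ is feasible at stage $m=k_0^n$, the successive improvement there is at most $h(P_n*K_{\sigma_0}, p_{G_0^n,f_0}*K_{\sigma_0})$, and this is dominated by $C_n n^{-1/2}$ because $C_n n^{1/2}\to\infty$; this is exactly the combination of the Leroux-type argument in Step 1 and the variance computation in Step 2 of the proof of Theorem \ref{theorem:convergence_rate_mixing_measure}. Two caveats: you need the $\Psi$-type moment bound to hold uniformly in $n$ (the paper only imposes $\Psi(G_0^n,\sigma_0)<\infty$ for each $n$, in Corollary \ref{corollary:convergence_rate_varying_G0}), and an $O_p(n^{-1/2})$ statement yields convergence in probability rather than the almost-sure convergence claimed, so an almost-sure rate for the $\chi^2$-type statistic is needed --- a point on which the paper's own Step 1 is equally terse.

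The real problem is the lower bound, and you have put your finger on exactly the right spot, but the obstacle you describe is not a technicality to be worked around: it is fatal under the stated hypotheses. The proposition is explicitly aimed at the regime $k>\widetilde{k}_0$, i.e.\ atoms of $G_0^n$ coalescing (see the remark immediately following the proposition and both examples of Case 2 in Section \ref{Section:simulation}). In Case 2.1, merging the atoms $1\pm 1/n$ into a single atom of mass $1/2$ shows $\inf_{G\in\mathcal{O}_{k_0^n-1}} h(p_{G,f_0}*K_{\sigma_0},p_{G_0^n,f_0}*K_{\sigma_0})=O(n^{-2})$, whereas $C_n n^{-1/2}\gg n^{-1}$ by the standing assumption $C_n n^{1/2}\to\infty$; the stopping rule in Step 2 of Algorithm 1 is therefore triggered at $m=2$ eventually, giving $\widehat{m}_n\leq 2<3=k_0^n$. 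Consequently $|\widehat{m}_n-k_0^n|\to 0$ cannot hold without an additional separation hypothesis ensuring, roughly, that $\inf_{G\in\mathcal{E}_{k_0^n-1}}h(p_{G,f_0}*K_{\sigma_0},p_{G_0^n,f_0}*K_{\sigma_0})$ eventually exceeds $C_n n^{-1/2}$ --- for instance a lower bound on the minimum mass and minimum atom separation of $G_0^n$ in the spirit of Proposition \ref{proposition:sufficient_condition_wellspecified_setting}. It is worth noting that the downstream use of this proposition (Corollary \ref{corollary:convergence_rate_varying_G0}, via the inequality of Proposition \ref{proposition:lower_bound_varying_G0}, which applies to $G\in\mathcal{O}_{k_0^n}$) only requires the inequality $\widehat{m}_n\leq k_0^n$; so the half you do prove is the half that matters, and the half you cannot complete is, on the evidence of the paper's own examples, not provable as stated.
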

According to the above proposition, $\widehat{m}_{n}$ will not converge to $
\widetilde{k}_{0}$ almost surely when $k>\widetilde{k}_{0}$. Additionally, from that 
proposition, inequality \eqref{eqn:inequality_fixed_G0} no longer holds since both the 
number of components of $\widehat{G}_{n}$ and $G_{0}^{n}$ vary. To account for that 
problem, we need to impose a much stronger condition on the identifiability of $f_{0}
*K_{\sigma_{0}}$.

Throughout the rest of this section, we assume that $d=d_{1}=1$, i.e., we specifically 
work with the univariate setting of $f_{0}$, and $k>\widetilde{k_{0}}$. 
Using a bound of \cite{Jonas-2016}, we obtain the following:
\begin{proposition} \label{proposition:lower_bound_varying_G0}
Given $\sigma_{0}>0$. Let $K$ be chosen such that $f_{0}*K_{\sigma_{0}}$ is 
identifiable up to the $(2k-2\widetilde{k}_{0})$-order and admits a uniform Lipschitz 
condition up to $(2k-2\widetilde{k}_{0})$-order. Then, there exist $\epsilon_{0}>0$ and 
$N(\epsilon_{0}) \in \mathbb{N}$ such that
\begin{eqnarray}
h(p_{G,f_{0}}*K_{\sigma_{0}},p_{G_{0}^{n},f_{0}}*K_{\sigma_{0}}) \geq C_{v}(\sigma)W_{1}^{2k-2\widetilde{k}_{0}+1}(G,G_{0}^{n}) \label{eqn:inequality_lower_bound_varying_G0}
\end{eqnarray}
for any $n \geq N(\epsilon_{0})$ and for any $G \in \mathcal{O}_{k_{0}^{n}}$ such that 
$W_{1}(G,\widetilde{G}_{0}) \leq \epsilon_{0}$. Here, $C_{v}(\sigma)$ is some positive 
constant depending only on $\widetilde{G}_{0}$ and $\sigma$.
\end{proposition}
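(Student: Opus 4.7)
Write $\tilde{f}:=f_{0}*K_{\sigma_{0}}$. The goal is a local, uniform-in-$n$ inverse inequality at the accumulation point $\widetilde{G}_{0}$. Since $W_{1}(G_{0}^{n},\widetilde{G}_{0})\to 0$ and $G$ is assumed to lie in a small neighborhood of $\widetilde{G}_{0}$, for all sufficiently large $n$ both $G_{0}^{n}$ and $G$ lie in a common small $W_{1}$-neighborhood of $\widetilde{G}_{0}$, and each has at most $k$ atoms. My plan is to reduce the proposition to a \emph{symmetric} local lower bound at $\widetilde{G}_{0}$, and then apply it with $G_{1}=G$, $G_{2}=G_{0}^{n}$.

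\textbf{Key lemma.} I would establish the following: there exist $\epsilon_{1}>0$ and $c=c(\widetilde{G}_{0},\sigma_{0})>0$ such that for all $G_{1},G_{2}\in\mathcal{O}_{k}$ with $W_{1}(G_{i},\widetilde{G}_{0})\leq\epsilon_{1}$ ($i=1,2$),
\[
h(p_{G_{1},\tilde{f}},p_{G_{2},\tilde{f}})\ \geq\ c\, W_{1}^{\,2k-2\widetilde{k}_{0}+1}(G_{1},G_{2}).
\]
Given this lemma, take $\epsilon_{0}\leq\epsilon_{1}/2$ and choose $N(\epsilon_{0})$ so that $W_{1}(G_{0}^{n},\widetilde{G}_{0})\leq\epsilon_{0}$ for $n\geq N(\epsilon_{0})$; applying the lemma with $G_{1}=G$, $G_{2}=G_{0}^{n}$ yields Proposition \ref{proposition:lower_bound_varying_G0} with $C_{v}(\sigma)=c$.

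\textbf{Proof of the key lemma (sketch).} I would argue by contradiction, closely following the Taylor-expansion-and-identifiability scheme of \cite{Jonas-2016} (see also \cite{Ho-Nguyen-Ann-17}). If the inequality fails, there exist sequences $G_{1,m},G_{2,m}\in\mathcal{O}_{k}$ with $W_{1}(G_{i,m},\widetilde{G}_{0})\to 0$ and
\[
h(p_{G_{1,m},\tilde{f}},p_{G_{2,m},\tilde{f}})\,/\,W_{1}^{\,2k-2\widetilde{k}_{0}+1}(G_{1,m},G_{2,m})\ \longrightarrow\ 0.
\]
Passing to a subsequence, partition the atoms of both $G_{1,m}$ and $G_{2,m}$ according to which of the $\widetilde{k}_{0}$ atoms of $\widetilde{G}_{0}$ they converge to; the total number of atoms assigned across all clusters is at most $2k$, so the aggregate excess beyond $\widetilde{k}_{0}$ is at most $2(k-\widetilde{k}_{0})$. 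Within each cluster, Taylor-expand $\tilde{f}(x\mid\theta)$ around the corresponding atom $\widetilde{\theta}_{j}$ up to order $2(k-\widetilde{k}_{0})$, the uniform Lipschitz hypothesis bounding the remainder. Rewriting $p_{G_{1,m},\tilde{f}}-p_{G_{2,m},\tilde{f}}$ in terms of these Taylor coefficients, normalizing by $W_{1}^{\,2k-2\widetilde{k}_{0}+1}(G_{1,m},G_{2,m})$, and extracting a further subsequence along which all normalized coefficients converge, one obtains, for almost every $x$,
\[
\sum_{j=1}^{\widetilde{k}_{0}}\ \sum_{|\eta|\leq 2(k-\widetilde{k}_{0})}\alpha_{\eta}^{(j)}\,\frac{\partial^{|\eta|}\tilde{f}}{\partial\theta^{\eta}}(x\mid\widetilde{\theta}_{j})\ =\ 0,
\]
with at least one $\alpha_{\eta}^{(j)}\neq 0$. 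This contradicts identifiability of $\tilde{f}$ in order $2(k-\widetilde{k}_{0})$.

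\textbf{Main obstacle.} The delicate combinatorial step is verifying (i) that the chosen normalization matches, up to a constant, $W_{1}^{\,2k-2\widetilde{k}_{0}+1}(G_{1,m},G_{2,m})$, and (ii) that the limiting coefficients $\alpha_{\eta}^{(j)}$ are not all zero. The univariate restriction $d=d_{1}=1$ is essential here: it lets one encode the joint atom-and-weight deviations inside each cluster as a signed measure on the real line whose $W_{1}$-cost is controlled by a univariate polynomial system of degree $2(k-\widetilde{k}_{0})$, and nontriviality of the limiting coefficients then reduces to the impossibility of such a polynomial having too many roots. This is precisely the minimax lower-bound construction carried out in \cite{Jonas-2016}, and once it is transcribed to the convolved kernel $\tilde{f}$ (for which the identifiability and uniform Lipschitz hypotheses are provided directly by the statement of the proposition), the argument closes.
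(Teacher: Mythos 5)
Your proposal is correct and follows essentially the same route as the paper: both reduce the claim to the symmetric local lower bound of Theorem 4.6 in \cite{Jonas-2016} applied to the convolved kernel $f_{0}*K_{\sigma_{0}}$ (for two measures in $\mathcal{O}_{k}$ lying in a common small $W_{1}$-neighborhood of $\widetilde{G}_{0}$), combined with a contradiction/Fatou argument to rule out the normalized density difference vanishing. The paper simply cites that theorem where you sketch its Taylor-expansion-and-identifiability internals, so there is no substantive difference in approach.
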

Similar to the argument of Lemma \ref{lemma:first_order_convolution}, a simple example 
of $K$ and $f_{0}$ for the assumptions of Proposition 
\ref{proposition:lower_bound_varying_G0} to hold is $\widehat{K}(t) \neq 0$ for all $t 
\in \mathbb{R}^{d}$ and $f_{0}$ is identifiable up to the $(2k-2\widetilde{k}
_{0})$-order. 
Now, a combination of Proposition 
\ref{proposition:varied_true_parameters} and Proposition 
\ref{proposition:lower_bound_varying_G0} yields the following result regarding the 
convergence rate of $\widehat{G}_{n}$ to $G_{0}^{n}$.
\begin{corollary} \label{corollary:convergence_rate_varying_G0}
Given the assumptions in Proposition \ref{proposition:lower_bound_varying_G0}. Assume that $\Psi(G_{0}^{n},\sigma_{0})<\infty$ for all $n \geq 1$. Then, we have
\begin{eqnarray}
W_{1}(\widehat{G}_{n},G_{0}^{n})=O_{p}\biggr(\sqrt{\dfrac{\Psi(G_{0}^{n},\sigma_{0})}{C_{v}^{2}(\sigma_{0})
}}n^{-1/(4k-4\widetilde{k}_{0}+2)}\biggr) \nonumber
\end{eqnarray}
where $C_{v}(\sigma_{0})$ is the constant in inequality \eqref{eqn:inequality_lower_bound_varying_G0}.
\end{corollary}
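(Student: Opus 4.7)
The plan is to combine the consistency of $\widehat{m}_n$ from Proposition~\ref{proposition:varied_true_parameters} with the Hellinger-to-Wasserstein lower bound of Proposition~\ref{proposition:lower_bound_varying_G0} via a standard minimum-distance argument, paralleling the proof of Theorem~\ref{theorem:convergence_rate_mixing_measure} but in the moving-parameter regime.

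First, by Proposition~\ref{proposition:varied_true_parameters}, almost surely $\widehat{m}_{n}=k_{0}^{n}$ eventually; in particular, $\widehat{G}_{n}\in\mathcal{O}_{k_{0}^{n}}$ for all sufficiently large $n$, and $G_{0}^{n}$ is a feasible competitor for Step~1 of Algorithm~1. By the minimality of $\widehat{G}_{n}$ over $\mathcal{O}_{\widehat{m}_{n}}$ together with the triangle inequality for $h$, one obtains
\begin{eqnarray}
h(p_{\widehat{G}_{n},f_{0}}*K_{\sigma_{0}},p_{G_{0}^{n},f_{0}}*K_{\sigma_{0}}) \leq 2\, h(P_{n}*K_{\sigma_{0}},p_{G_{0}^{n},f_{0}}*K_{\sigma_{0}}). \nonumber
\end{eqnarray}
Next, since $P_{n}*K_{\sigma_{0}}$ is an unbiased estimator of $p_{G_{0}^{n},f_{0}}*K_{\sigma_{0}}$, a direct variance computation (identical to the one used in Theorem~\ref{theorem:convergence_rate_mixing_measure}, cf.\ the definition of $\Psi$ in condition (P.2)) gives
\begin{eqnarray}
h(P_{n}*K_{\sigma_{0}},p_{G_{0}^{n},f_{0}}*K_{\sigma_{0}}) = O_{p}\!\left(\sqrt{\Psi(G_{0}^{n},\sigma_{0})/n}\right), \nonumber
\end{eqnarray}
where the assumption $\Psi(G_{0}^{n},\sigma_{0})<\infty$ is precisely what guarantees the $n^{-1/2}$ control.

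The key step is then to invoke Proposition~\ref{proposition:lower_bound_varying_G0}, but this requires a preliminary consistency fact: $W_{1}(\widehat{G}_{n},\widetilde{G}_{0})\leq\epsilon_{0}$ eventually (with high probability), so that $\widehat{G}_{n}$ enters the regime where the local lower bound applies. To establish this, I would combine the two displays above with the identifiability of $f_{0}*K_{\sigma_{0}}$ (which is implied by the $(2k-2\widetilde{k}_{0})$-order identifiability assumption) and the fact that $G_{0}^{n}\to\widetilde{G}_{0}$ in $W_{1}$: the Hellinger distance between $p_{\widehat{G}_{n},f_{0}}*K_{\sigma_{0}}$ and $p_{\widetilde{G}_{0},f_{0}}*K_{\sigma_{0}}$ tends to zero in probability, and a compactness argument on $\mathcal{O}_{k}(\Theta)$ (using compactness of $\Theta$ and the bound $\widehat{m}_{n}\leq k$ eventually) then yields $W_{1}(\widehat{G}_{n},\widetilde{G}_{0})\to 0$ in probability.

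Once the local region is accessible, Proposition~\ref{proposition:lower_bound_varying_G0} applied to $G=\widehat{G}_{n}\in\mathcal{O}_{k_{0}^{n}}$ delivers
\begin{eqnarray}
C_{v}(\sigma_{0})\, W_{1}^{\,2k-2\widetilde{k}_{0}+1}(\widehat{G}_{n},G_{0}^{n}) \leq h(p_{\widehat{G}_{n},f_{0}}*K_{\sigma_{0}},p_{G_{0}^{n},f_{0}}*K_{\sigma_{0}}), \nonumber
\end{eqnarray}
and chaining this with the earlier bounds produces the claimed rate after taking a $(2k-2\widetilde{k}_{0}+1)$-th root. The main obstacle I anticipate is the preliminary consistency step: because both the estimator's number of components and the true $G_{0}^{n}$ are moving with $n$, extracting weak convergence of $\widehat{G}_{n}$ to $\widetilde{G}_{0}$ from vanishing Hellinger distance requires a careful continuity/compactness argument that must be uniform in $n$, rather than the usual fixed-$G_{0}$ identifiability appeal used in the proof of Theorem~\ref{theorem:convergence_rate_mixing_measure}.
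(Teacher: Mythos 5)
Your proposal is correct and follows exactly the route the paper intends: the paper gives no separate proof of this corollary, stating only that it follows by combining Proposition \ref{proposition:varied_true_parameters} and Proposition \ref{proposition:lower_bound_varying_G0} with the minimum-distance argument of Theorem \ref{theorem:convergence_rate_mixing_measure}, which is precisely your chain of (a) eventual equality $\widehat{m}_{n}=k_{0}^{n}$, (b) the $O_{p}\bigl(\sqrt{\Psi(G_{0}^{n},\sigma_{0})/n}\bigr)$ bound on $h(P_{n}*K_{\sigma_{0}},p_{G_{0}^{n},f_{0}}*K_{\sigma_{0}})$ together with the triangle/minimality inequality, and (c) the polynomial lower bound \eqref{eqn:inequality_lower_bound_varying_G0}. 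Your explicit handling of the localization requirement $W_{1}(\widehat{G}_{n},\widetilde{G}_{0})\leq\epsilon_{0}$ — via vanishing Hellinger distance, compactness of $\mathcal{O}_{k}(\Theta)$, and identifiability of $f_{0}*K_{\sigma_{0}}$ — supplies a detail the paper leaves implicit, and the compactness/Fatou-style argument you sketch for it is the same device the paper uses in the proofs of Proposition \ref{proposition:first_order_identifiability} and Lemma \ref{lemma:bandwith_vary_hellinger_bound}.
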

\paragraph{Remark:}
\begin{itemize}
\item[(i)] If $f_{0}$ and $K$ are univariate Gaussian kernels or Cauchy kernel respectively, 
then $\Psi(G_{0}^{n},\sigma_{0}) \to \Psi(\widetilde{G}_{0},\sigma_{0})$ as $n \to \infty$.
\item[(ii)] If $W_{1}(G_{0}^{n},\widetilde{G}_{0}) = O(n^{-1/(4k-4\overline{k}_{0}+2)+
\kappa})$ for some $\kappa>0$, then the convergence rate $n^{-1/(4k-4\overline{k}
_{0}+2)}$ of $\widehat{G}_{n}$ to $G_{0}^{n}$ is sharp in the sense of minimax (cf. Theorem 3.2 in 
\citep{Jonas-2016}). Therefore, our estimator in Algorithm 1 also achieves the minimax 
rate of convergence for estimating $G_{0}^{n}$. However, our 
estimator from Algorithm 1 may be more appealing than that from \cite{Jonas-2016}
for computational reasons.
We will illustrate the result of Corollary 
\ref{corollary:convergence_rate_varying_G0} via careful simulation studies in Section 
\ref{Section:simulation}.
\end{itemize}

\section{Empirical studies}
\label{Section:simulation}
We present in this section numerous numerical studies to validate our theoretical results in 
the previous sections. To find the mixing measure $\widehat{G}_{n,m}=\mathop {\arg 
\min} \limits_{G \in \mathcal{O}_{m}}{h(p_{G,f}*K_{\sigma_{1}},P_{n}
*K_{\sigma_{0}})}$, we utilize the HMIX algorithm developed in Section 4.1 of 
\citep{Cutler-1996}. This algorithm is essentially similar to the EM algorithm and ultimately 
gives us local solutions to the previous minimization problem.
\subsection{Synthetic Data}
We start with testing Algorithm 1 using synthetic data. The discussion is divided into separate enquiries of the well- and mis-specified kernel setups.

\noindent\textbf{Well-specified kernel setting} Under this setting, we assess the performance of our estimator in Algorithm 1 under two cases of $G_{0}$:
\paragraph{Case 1:} $G_{0}$ is fixed with the sample size. Under this case, we consider three choices of $f_{0}$: Gaussian and Cauchy kernel for satisfying first order identifiability condition, and skewnormal kernel 
for failing the first order identifiability condition.
\begin{itemize}
\item Case 1.1 - Gaussian family:
\begin{eqnarray*}
  f_{0}(x|\eta,\tau) &=& \dfrac{1}{\sqrt{2\pi}\tau}\exp\left(-\dfrac{(x-\eta)^{2}}{2\tau^{2}}\right) \\
  G_{0} &=& \dfrac{1}{2}\delta_{(0,\sqrt{10})}+\dfrac{1}{4}\delta_{(-0.3,\sqrt{0.05})} +\dfrac{1}{4}\delta_{(0.3,\sqrt{0.05})}.
\end{eqnarray*}
\item Case 1.2 - Cauchy family:
\begin{eqnarray*}
  f_{0}(x|\eta,\tau) &=& \dfrac{1}{\pi\tau(1+(x-\eta)^{2}/\tau^{2})} \\
  G_{0} &=& \dfrac{1}{2}\delta_{(0,\sqrt{10})}+\dfrac{1}{4}\delta_{(-0.3,\sqrt{0.05})}+\dfrac{1}{4}\delta_{(0.3,\sqrt{0.05})}.
\end{eqnarray*}
\item Case 1.3 - Skewnormal family:
\begin{eqnarray*}
  f_{0}(x|\eta,\tau,m) &=& \dfrac{2}{\sqrt{2\pi}\tau}\exp\left(-\dfrac{(x-\eta)^{2}} {2\tau^{2}}\right) \Phi\left(m(x-\eta)/\tau\right) \\
  G_{0} &=& \dfrac{1}{2}\delta_{(0,\sqrt{10},0)}+\dfrac{1}{4}\delta_{(-0.3,\sqrt{0.05},0)} +\dfrac{1}{4}\delta_{(0.3,\sqrt{0.05},0)}.
\end{eqnarray*} where $\Phi$ is the cumulative function of standard normal distribution.
\end{itemize}
For the Gaussian case and skewnormal case of $f_{0}$, we choose $K$ to be the 
standard Gaussian kernel while $K$ is chosen to be the standard Cauchy kernel for the 
Cauchy case of $f_{0}$. Note that, regarding skewnormal case it was shown that the 
Fisher information matrix $I(G_{0},f_{0})$ has second level singularity (cf. Theorem 5.3 in 
\citep{Ho-Nguyen-Ann-17}); therefore, from the result of Proposition 
\ref{proposition:singularity_Fisher_information}, the convergence rate of $\widehat{G}
_{n}$ to $G_{0}$ will be at most $n^{-1/6}$. Now for the bandwidths, we choose $
\sigma_{1} = \sigma_{0} = 1$. The sample sizes will be $n = 200*i$ where $1 \leq i \leq 
20$. The tuning parameter $C_{n}$ is chosen according to BIC criterion. More specifically, 
$C_{n} = \sqrt{3\log n}/\sqrt{2}$ for Gaussian and Cauchy family while $C_{n}=
\sqrt{2\log n}$ for skewnormal family. For each sample size $n$, we perform Algorithm 1 
exactly 100 times and then choose $\widehat{m}_{n}$ to be the estimated number of 
components with the highest probability of appearing. Afterwards, we take the average 
among all the replications with the estimated number of components $\widehat{m}_{n}$ 
to obtain $W_{1}(\widehat{G}_{n},G_{0})$. See Figure \ref{figure_well_specify_kernel} 
where the Wasserstein distances $W_{1}(\widehat{G}_{n},G_{0})$ and the percentage of 
time $\widehat{m}_{n}=3$ are plotted against increasing sample size $n$ along with the 
error bars. The simulation results regarding Gaussian and Cauchy family match well with the 
standard $n^{-1/2}$ convergence rate from Theorem 
\ref{theorem:convergence_rate_mixing_measure} while the simulation results regarding 
skewnormal family also fit with the best possible convergence rate $n^{-1/6}$ as we argued earlier.
\begin{figure*}[h]
\centering
\captionsetup{justification=centering}
\begin{minipage}[b]{.20\textwidth}
\includegraphics[width=35mm,height=40mm]{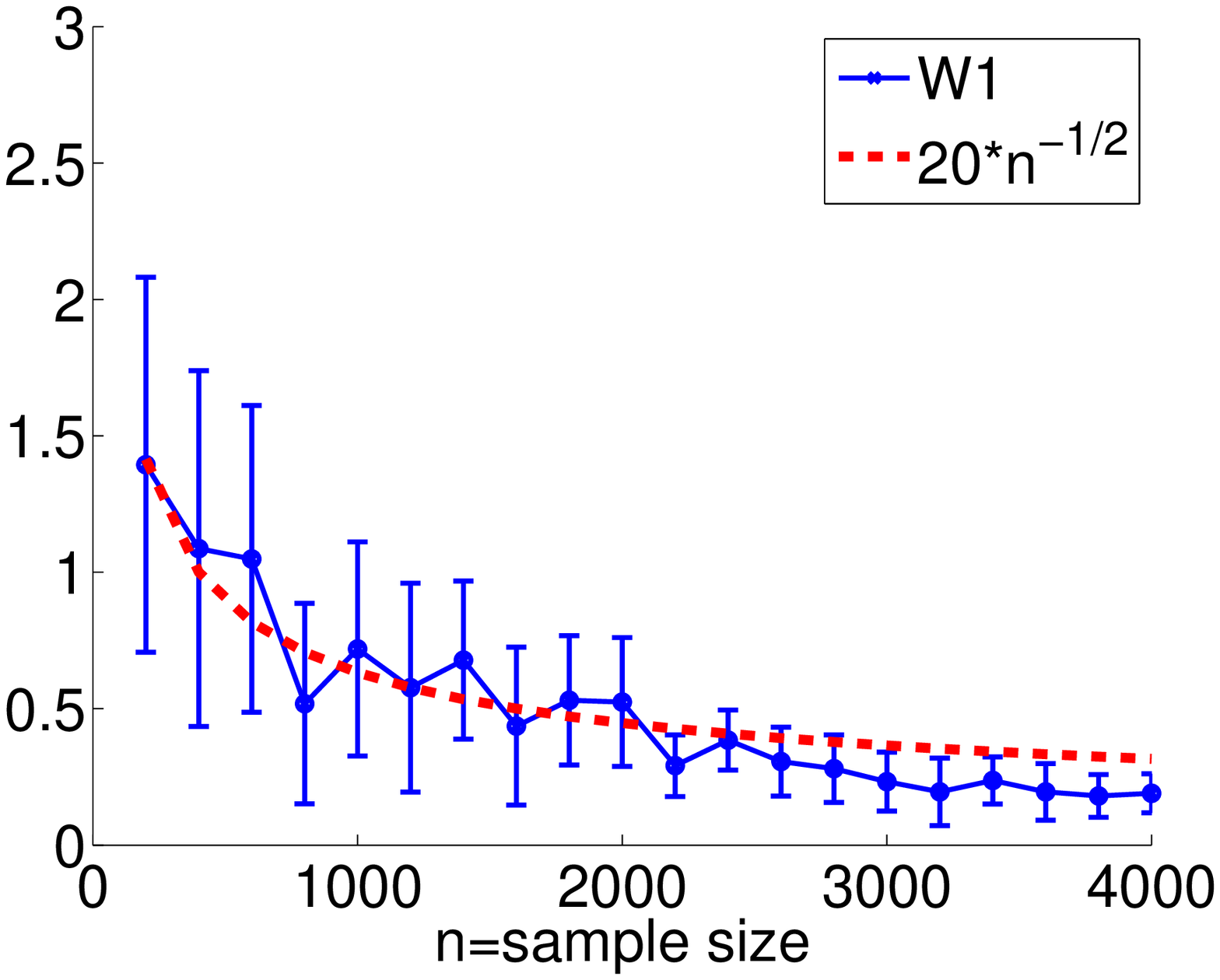}
\end{minipage}
\quad \quad
\begin{minipage}[b]{.20\textwidth}
\includegraphics[width=35mm,height=40mm]{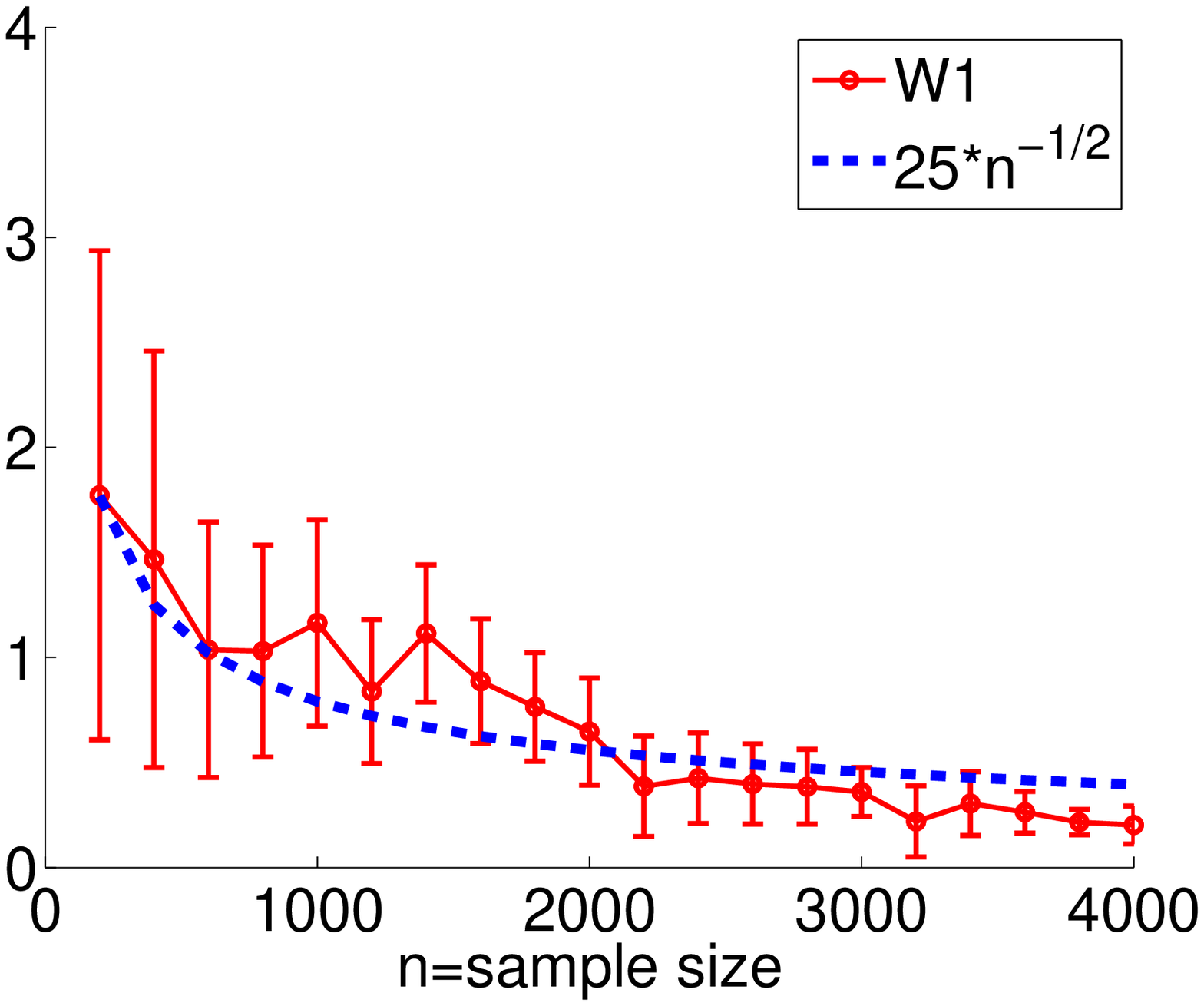}
\end{minipage}
\quad \quad
\begin{minipage}[b]{.20\textwidth}
\includegraphics[width=35mm,height=40mm]{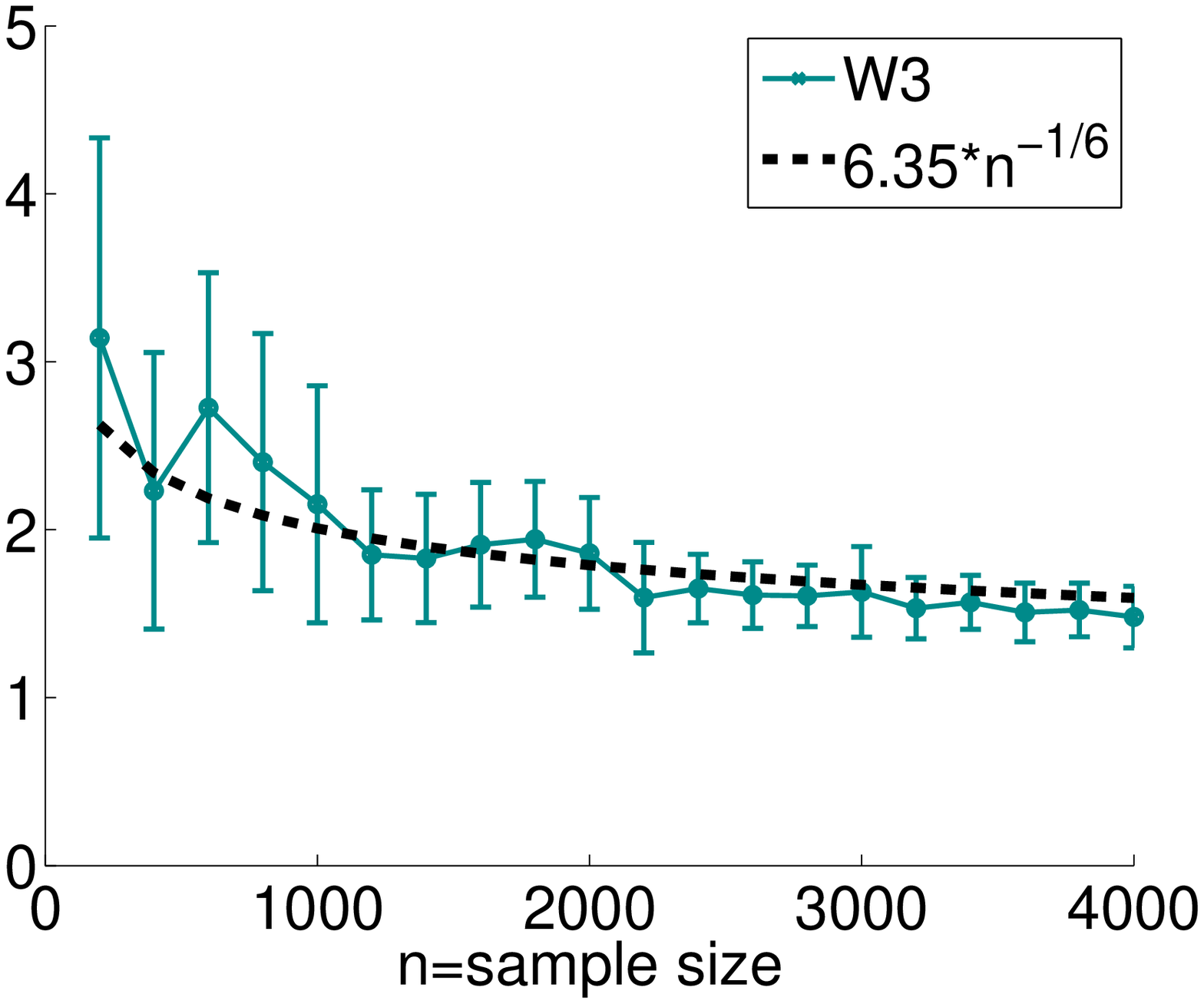}
\end{minipage}
\quad \quad
\begin{minipage}[b]{.20\textwidth}
\includegraphics[width=35mm,height=40mm]{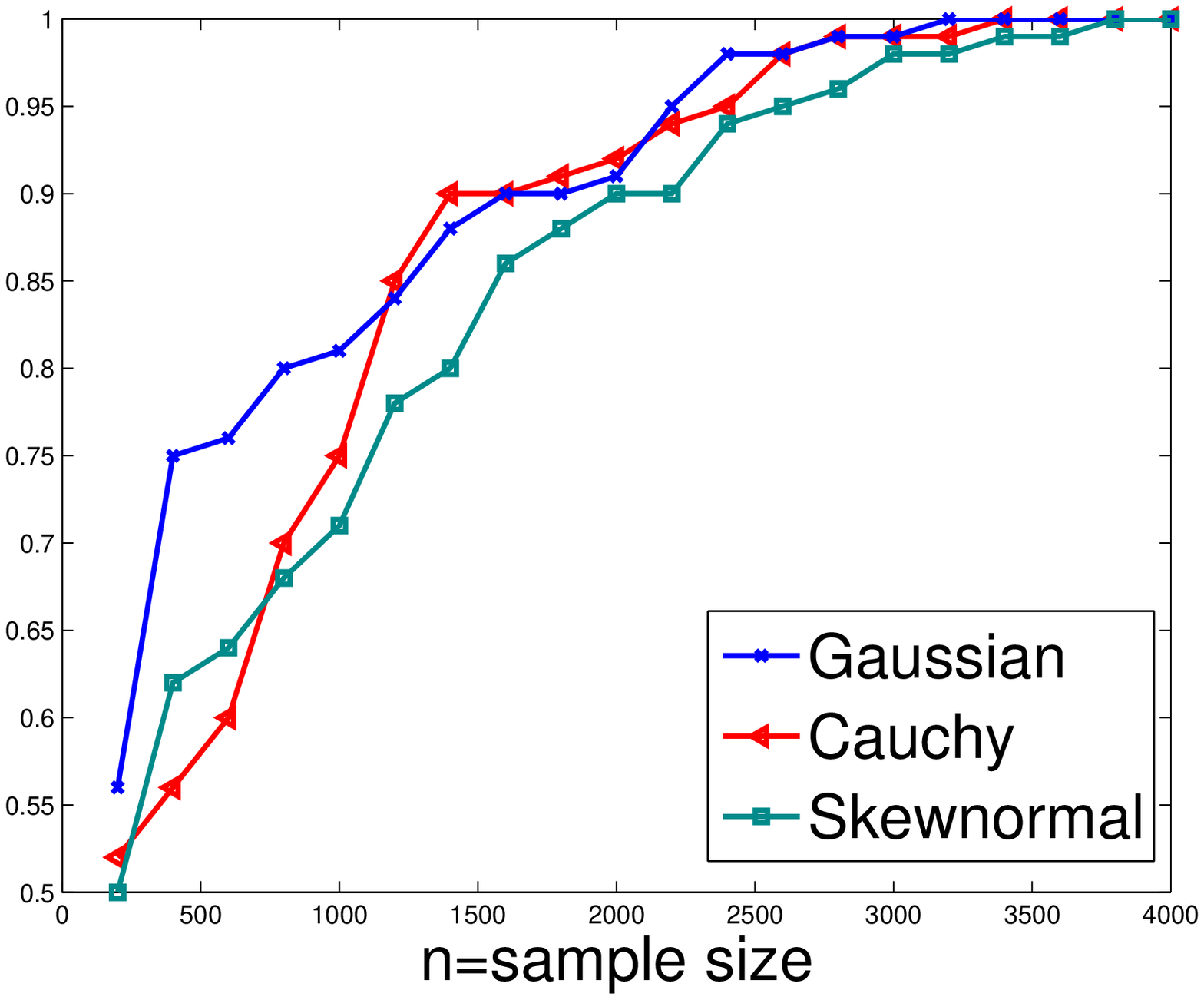}
\end{minipage}
\caption{\footnotesize{
Performance of $\widehat{G}_{n}$ in Algorithm 1 under the well-specified kernel setting and fixed $G_{0}$. Left to right:
(1) $W_{1}(\widehat{G}_{n},G_{0})$ under Gaussian case.
(2) $W_{1}(\widehat{G}_{n},G_{0})$ under Cauchy case. 
(3) $W_{1}(\widehat{G}_{n},G_{0})$ under Skewnormal case.
(4) Percentage of time $\widehat{m}_{n}=3$ obtained from 100 runs.
}}
\label{figure_well_specify_kernel}
\end{figure*}
\paragraph{Case 2:} $G_{0}$ is varied with the sample size. Under this case, we consider two choices of $f_{0}$: Gaussian and Cauchy kernel with only location parameter.
\begin{itemize}
\item Case 2.1 - Gaussian family:
\begin{eqnarray*}
  f_{0}(x|\eta) &=& \dfrac{1}{\sqrt{2\pi}}\exp\left(-\dfrac{(x-\eta)^{2}}{2}\right) \\
  G_{0} &=& \dfrac{1}{4}\delta_{1-1/n}+\dfrac{1}{4}\delta_{1+1/n}+\dfrac{1}{2}\delta_{2},
\end{eqnarray*}  where $n$ is the sample size.
\item Case 2.2 - Cauchy family:
\begin{eqnarray*}
  f_{0}(x|\eta) &=& \dfrac{1}{\pi(1+(x-\eta)^{2})} \\
  G_{0} &=& \dfrac{1}{4}\delta_{1-1/\sqrt{n}}+\dfrac{1}{4}\delta_{1+1/\sqrt{n}} +\dfrac{1}{2}\delta_{1+2/\sqrt{n}}.
\end{eqnarray*}
\end{itemize}
With these settings, we can verify that $\widetilde{G}_{0}=\dfrac{1}{2}\delta_{1}+
\dfrac{1}{2}\delta_{2}$ for the Gaussian case and $\widetilde{G}_{0}=\delta_{1}$ for 
the Cauchy case. Additionally, $W_{1}(G_{0},\widetilde{G}_{0}) \asymp 1/n$ for the 
Gaussian case 
and $W_{1}(G_{0},\widetilde{G}_{0}) \asymp 1/\sqrt{n}$ for the Cauchy 
case. According to the result of Corollary \ref{corollary:convergence_rate_varying_G0}, 
the convergence rate of $W_{1}(\widehat{G}_{n},G_{0})$ is $n^{-1/6}$ for the Gaussian 
case and is $n^{-1/10}$ for the Cauchy case, which are also minimax according to the 
values of $W_{1}(G_{0},\widetilde{G}_{0})$. The procedure for choosing $K, \sigma_{1}, 
\sigma_{0}, n$, and $\widehat{m}_{n}$ is similar to that of Case 1.  See Figure 
\ref{figure_well_specify_kernel_varied_true} where the Wasserstein distances $W_{1}
(\widehat{G}_{n},G_{0})$ and the percentage of time $\widehat{m}_{n}=3$ are plotted 
against increasing sample size $n$ along with the error bars. The simulation results for 
both Gaussian and Cauchy family agree with the convergence rates $n^{-1/6}$ and $n^{-1/10}$ respectively.
\begin{figure*}[h]
\centering
\captionsetup{justification=centering}
\begin{minipage}[b]{.25\textwidth}
\includegraphics[width=40mm,height=40mm]{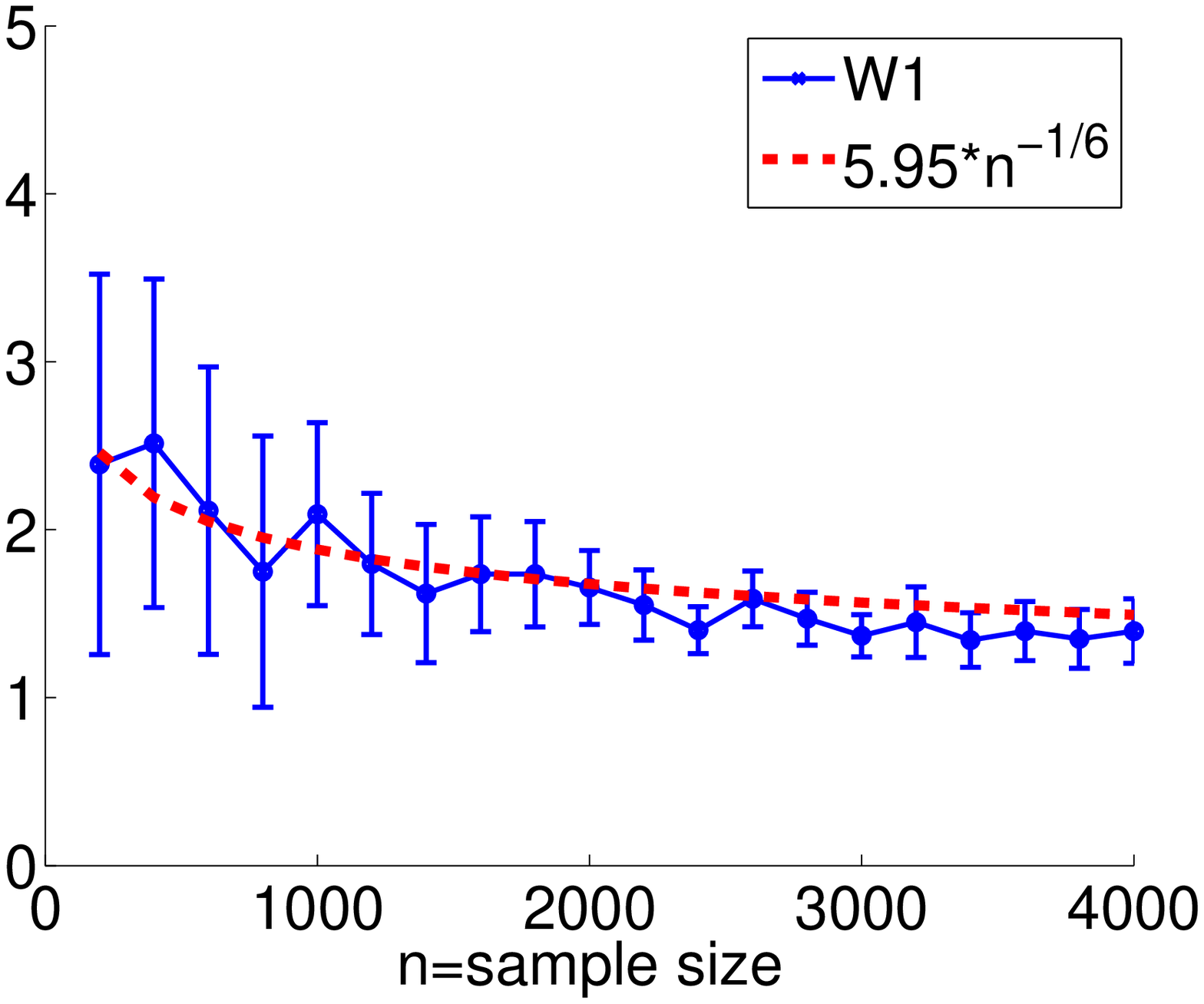}
\end{minipage}
\quad \quad
\begin{minipage}[b]{.25\textwidth}
\includegraphics[width=40mm,height=40mm]{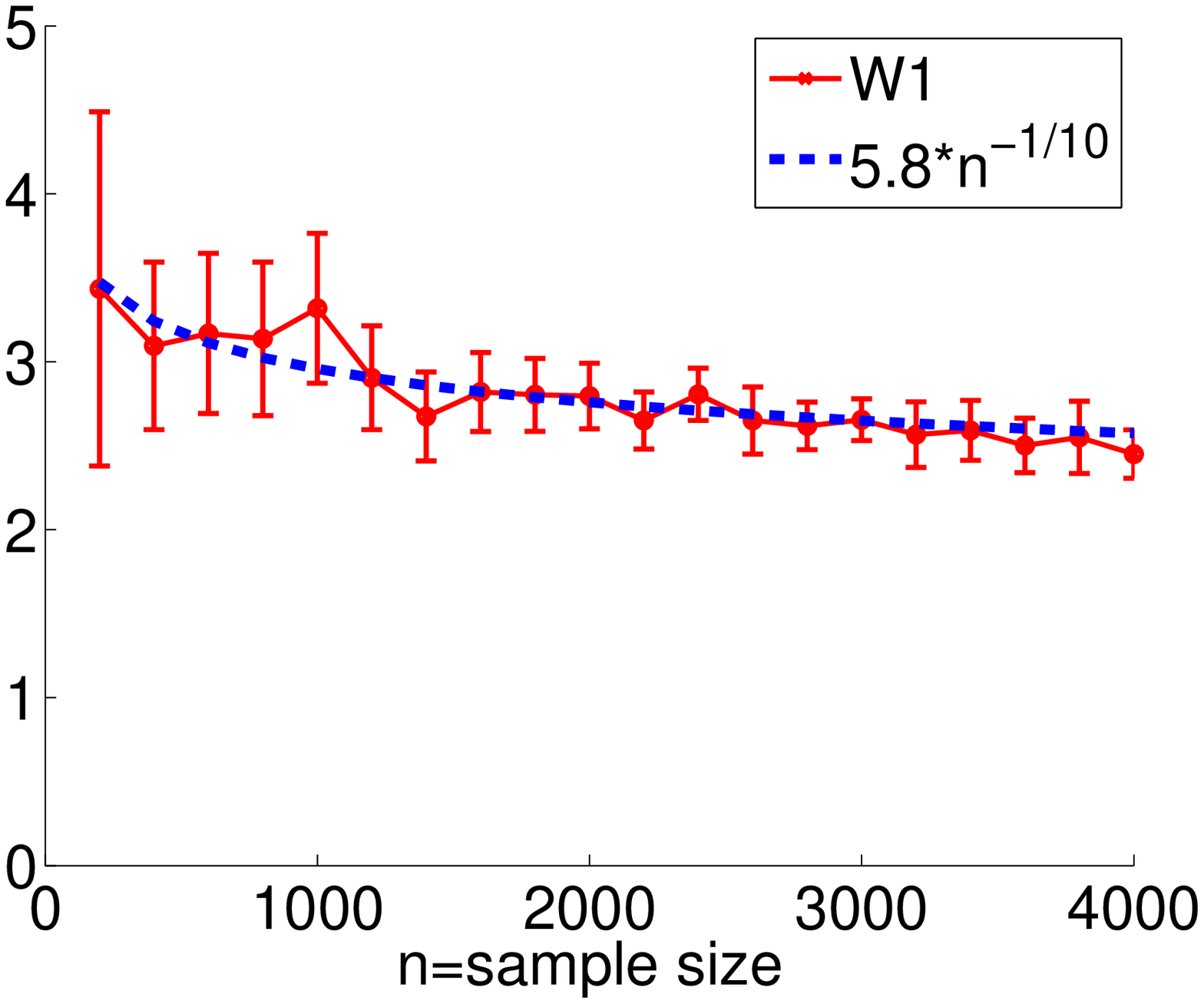}
\end{minipage}
\quad \quad
\begin{minipage}[b]{.25\textwidth}
\includegraphics[width=40mm,height=40mm]{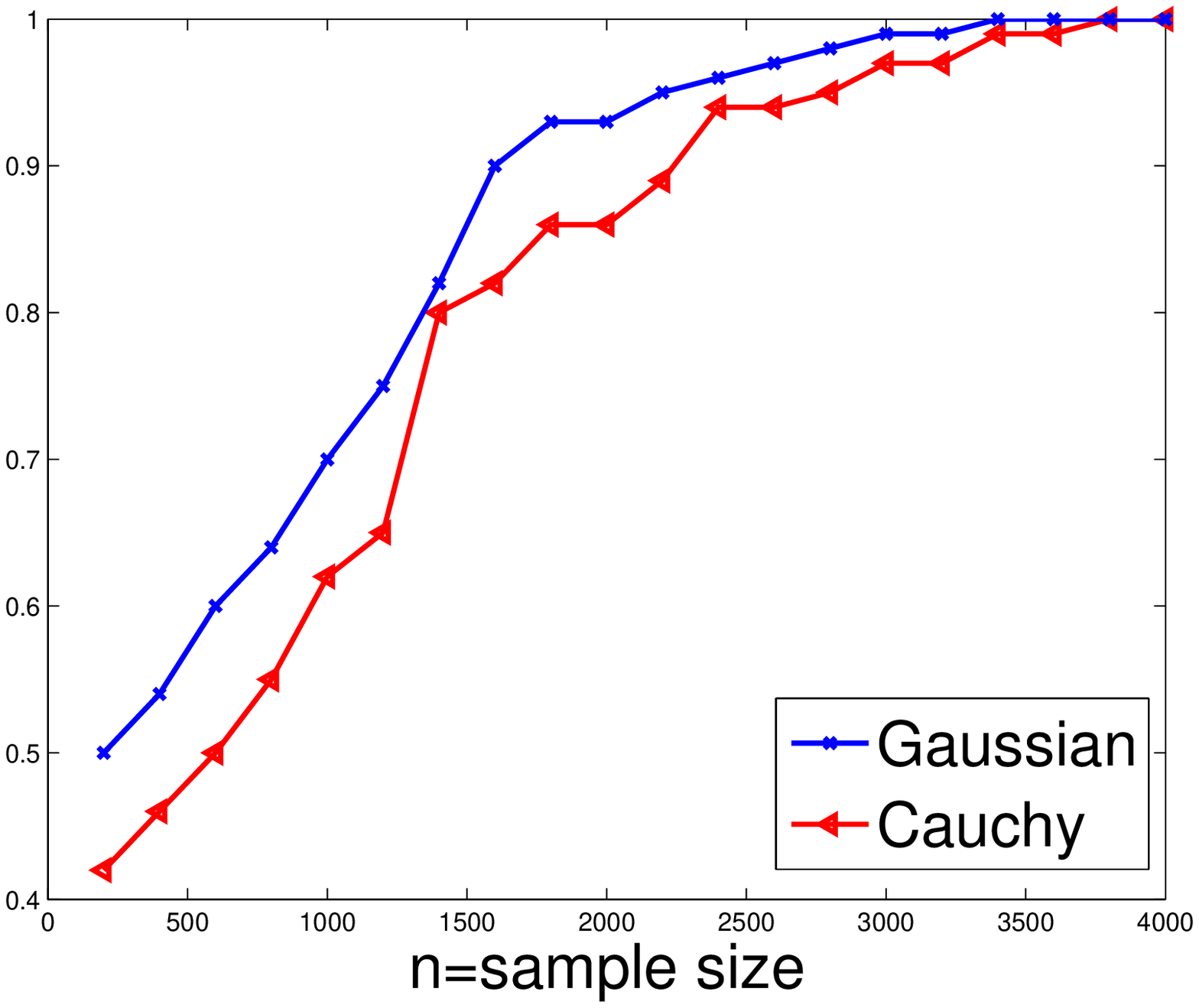}
\end{minipage}
\caption{\footnotesize{
Performance of $\widehat{G}_{n}$ in Algorithm 1 under the well-specified kernel setting and varied $G_{0}$. Left to right:
(1) $W_{1}(\widehat{G}_{n},G_{0})$ under Gaussian case.
(2) $W_{1}(\widehat{G}_{n},G_{0})$ under Cauchy case.
(3) Percentage of time $\widehat{m}_{n}=3$ obtained from 100 runs.
}}
\label{figure_well_specify_kernel_varied_true}
\end{figure*}
\paragraph{Misspecified kernel setting} Under that setting, we assess the performance of Algorithm 1 under two cases of $f, f_{0}$, $K$, $\sigma_{1}$, $\sigma_{0}$, and $G_{0}$.
\paragraph{Case 3:} $f_{0}$ is a finite mixture of $f$ and $\sigma_{1}=\sigma_{0}>0$. Under this case, we consider two choices of $f$: Gaussian and Cauchy kernel with both location and scale parameter.
\begin{itemize}
\item Case 3.1 - Gaussian distribution: $f$ is normal kernel,
\begin{eqnarray*}
  f_{0}(x|\eta,\tau) &=& \dfrac{1}{2}f(x-2|\eta,\tau)+\dfrac{1}{2}f(x+2|\eta,\tau) \\
  G_{0} &=& \dfrac{1}{3}\delta_{(0,2)}+\dfrac{2}{3}\delta_{(1,3)}.
\end{eqnarray*}
\item Case 3.2 - Cauchy distribution: $f$ is Cauchy kernel,
\begin{eqnarray*}
  f_{0}(x|\eta,\tau) &=& \dfrac{1}{2}f(x-2|\eta,\tau)+\dfrac{1}{2}f(x+2|\eta,\tau) \\
  G_{0} &=& \dfrac{1}{3}\delta_{(0,2)}+\dfrac{2}{3}\delta_{(1,3)}.
\end{eqnarray*}
\end{itemize}
With these settings of $f, f_{0}, G_{0}$, we can verify that $G_{*}=\dfrac{1}{6}
\delta_{(-2,2)}+\dfrac{1}{3}\delta_{(-1,3)}+\dfrac{1}{6}\delta_{(2,2)}+\dfrac{1}{3}
\delta_{(3,3)}$ for any $\sigma_{1}=\sigma_{0} >0$. The procedure for choosing $K, 
\sigma_{0}, n$, and $\widehat{m}_{n}$ is similar to that of Case 1 in the well-specified 
kernel setting. Figure \ref{figure_misspecify_kernel} illustrates the Wasserstein distances 
$W_{1}(\widehat{G}_{n},G_{*})$ and the percentage of time $\widehat{m}_{n}=4$ along 
with the increasing sample size $n$ and the error bars. The simulation results under that 
simple misspecified seting of both families suit with the standard $n^{-1/2}$ rate from 
Theorem \ref{theorem:convergence_rate_mixing_measure_misspecified_strong}.
\begin{figure*}[h]
\centering
\captionsetup{justification=centering}
\begin{minipage}[b]{.25\textwidth}
\includegraphics[width=40mm,height=40mm]{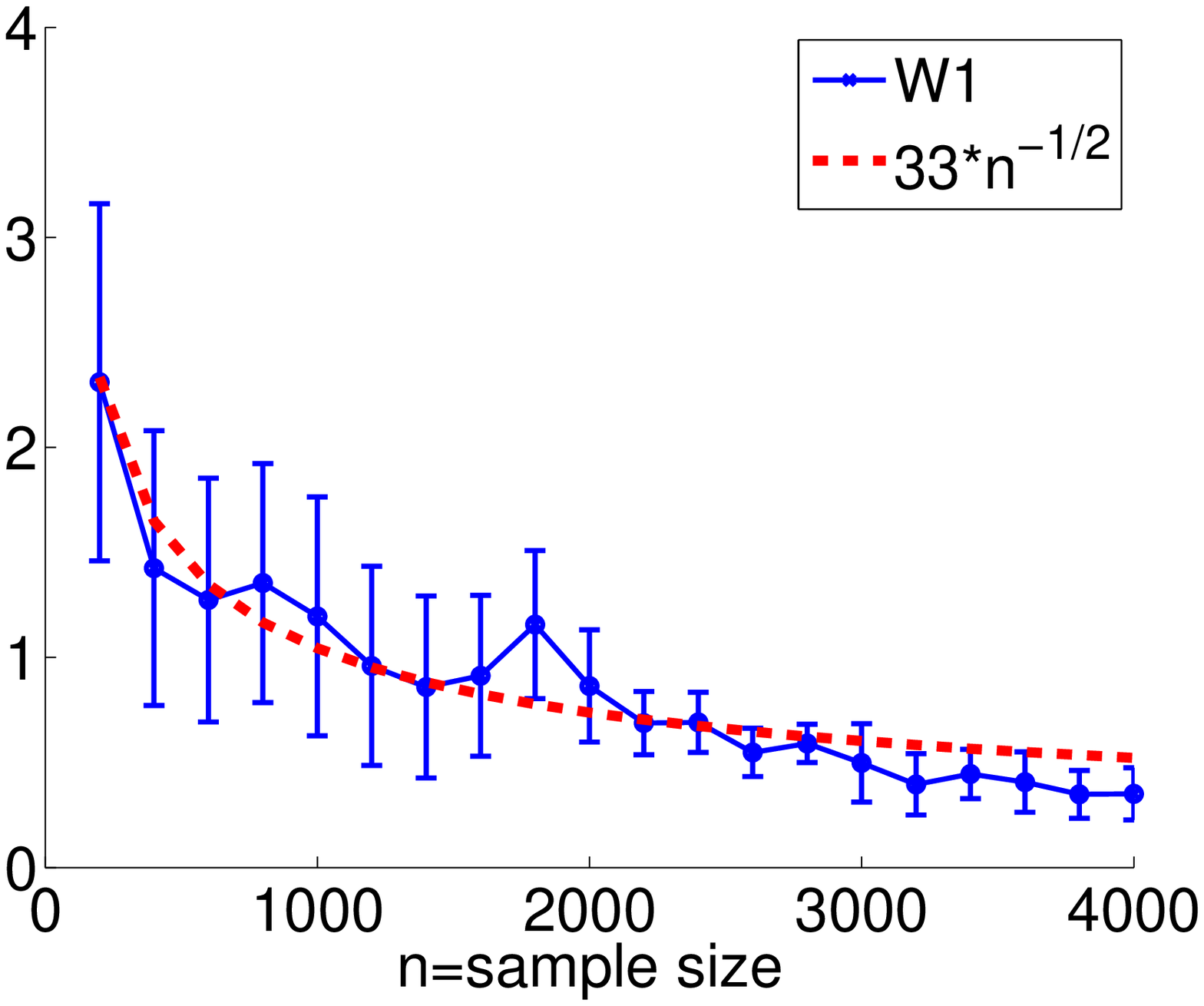}
\end{minipage}
\quad \quad
\begin{minipage}[b]{.25\textwidth}
\includegraphics[width=40mm,height=40mm]{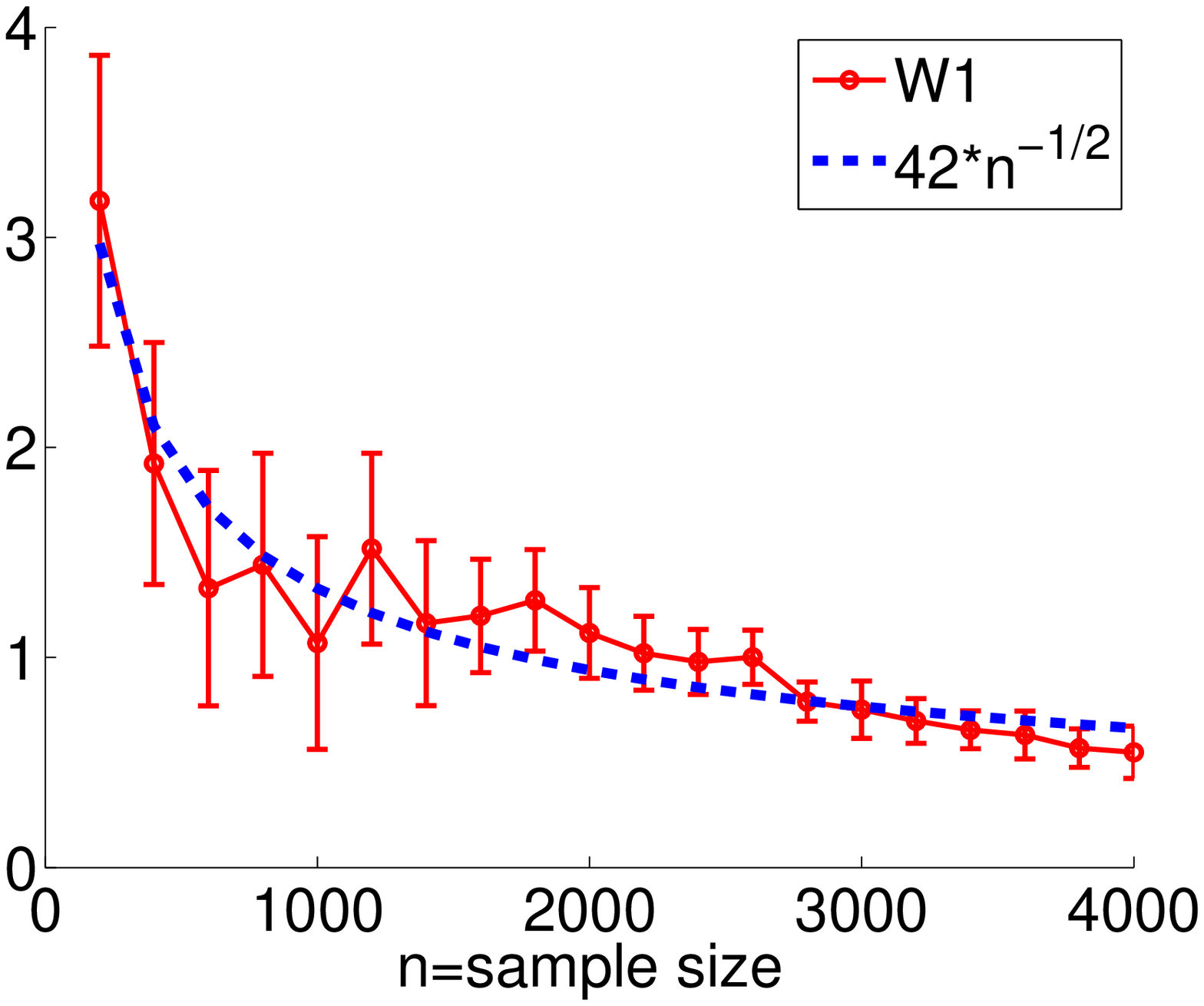}
\end{minipage}
\quad \quad
\begin{minipage}[b]{.25\textwidth}
\includegraphics[width=40mm,height=40mm]{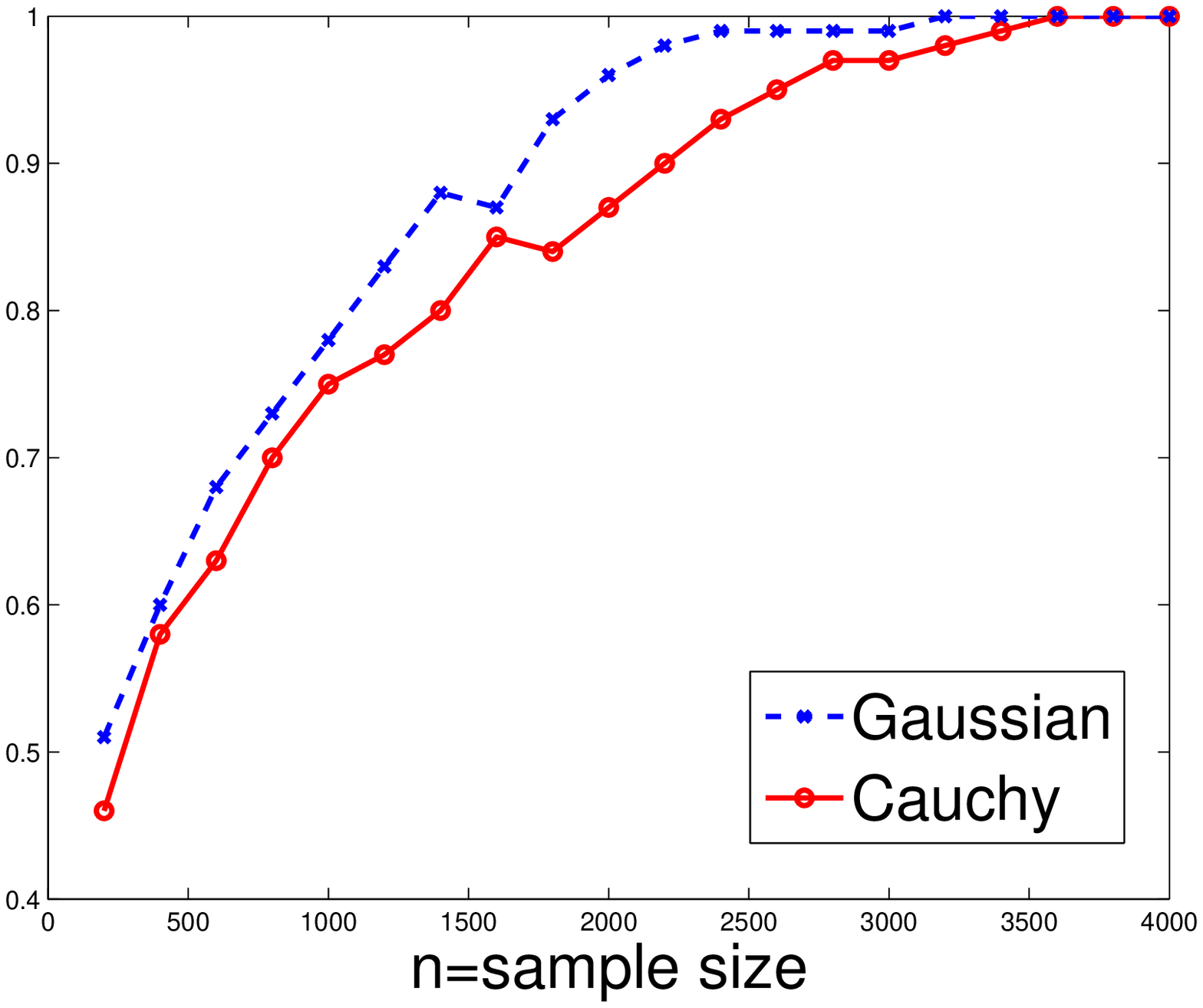}
\end{minipage}
\caption{\footnotesize{
Performance of $\widehat{G}_{n}$ in Algorithm 1 under misspecified kernel setting and $f_{0}$ is a finite mixture of $f$. Left to right:
(1) $W_{1}(\widehat{G}_{n},G_{*})$ under Gaussian case.
(2) $W_{1}(\widehat{G}_{n},G_{*})$ under Cauchy case.
(3) Percentage of time $\widehat{m}_{n}=4$ obtained from 100 runs.
}}
\label{figure_misspecify_kernel}
\end{figure*}
\paragraph{Case 4:} $\sigma_{1}, \sigma_{0}$ are chosen such that $\left\{f*K_{\sigma_{1}}\right\}=\left\{f_{0}*K_{\sigma_{0}}\right\}$. Under this case, we consider two choices of $f$ and $f_{0}$: Gaussian and Cauchy kernel with only location parameter.
\begin{itemize}
\item Case 4.1 - Gaussian distribution:
\begin{eqnarray*}
  f(x|\eta) &=& \dfrac{1}{\sqrt{2\pi}}\exp\left(-\dfrac{(x-\eta)^{2}}{2}\right), \ f_{0}(x|\eta)  =  \dfrac{1}{2\sqrt{2\pi}}\exp\left(-\dfrac{(x-\eta)^{2}}{8}\right) \\
  G_{0} &=& \dfrac{1}{3}\delta_{-1}+\dfrac{2}{3}\delta_{2}
\end{eqnarray*}
\item Case 4.2 - Cauchy distribution: $f$ is Cauchy kernel,
\begin{eqnarray*}
 f(x|\eta) &=& \dfrac{1}{\pi(1+(x-\eta)^{2})}, f_{0}(x|\eta)  =  \dfrac{4}{2\pi(4+(x-\eta)^{2})} \\
  G_{0} &=& \dfrac{1}{3}\delta_{-1}+\dfrac{2}{3}\delta_{2}
\end{eqnarray*}
\end{itemize}
To ensure that $\left\{f*K_{\sigma_{1}}\right\}=\left\{f_{0}*K_{\sigma_{0}}\right\}$,
we need to choose $\sigma_{1}^{2}+1=\sigma_{0}^{2}+4$ for both the cases of
Gaussian and Cauchy distribution when $K$ is chosen to be the standard Gaussian and
Cauchy kernel respectively. Therefore, with our simulation studies of Algorithm 1 in
this case, we choose $\sigma_{1}=2$ while $\sigma_{0}=1$. Under these choices of
bandwidths, we quickly have $G_{*}=G_{0}$. Note that, since there exists no value of $
\sigma_{0}>0$ such that $\sigma_{0}^{2}+4=1$, it implies that the estimator from WS
algorithm may not be able to estimate the true mixing measure $G_{0}$ regardless the
value of $\sigma_{0}$. Now, the procedure for choosing $K$, $n$, and $\widehat{m}_{n}
$ is similar to that of Case 1 in the well-specified kernel seting. Figure
\ref{figure_misspecify_kernel_variance} illustrates the Wasserstein distances $W_{1}
(\widehat{G}_{n},G_{0})$ and the percentage of time $\widehat{m}_{n}=2$ along with
the increasing sample size $n$ and the error bars. The simulation results under that
misspecified seting of both families fit with the standard $n^{-1/2}$ rate from Theorem
\ref{theorem:convergence_rate_mixing_measure_misspecified_strong}.
\begin{figure*}[h]
\centering
\captionsetup{justification=centering}
\begin{minipage}[b]{.25\textwidth}
\includegraphics[width=40mm,height=40mm]{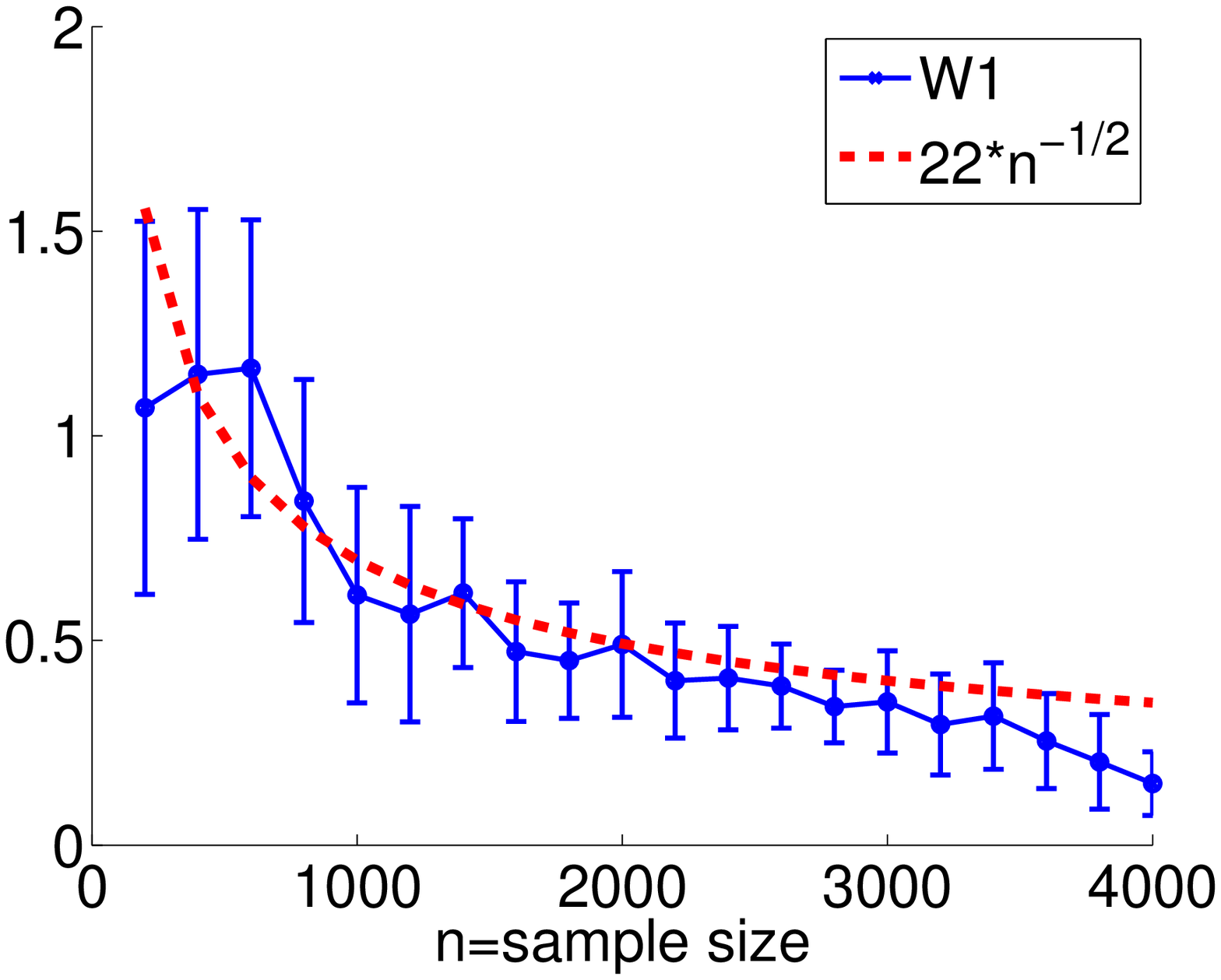}
\end{minipage}
\quad \quad
\begin{minipage}[b]{.25\textwidth}
\includegraphics[width=40mm,height=40mm]{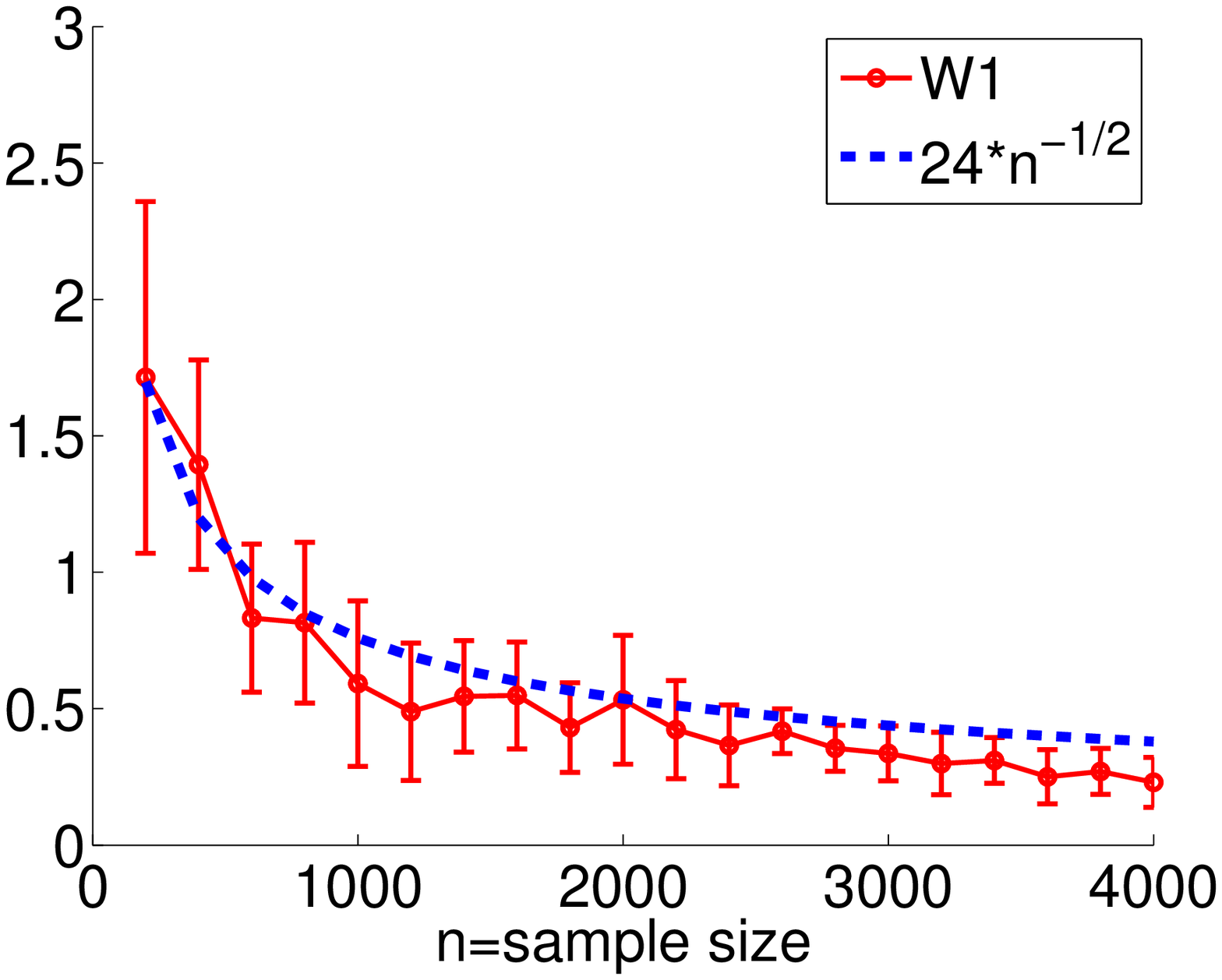}
\end{minipage}
\quad \quad
\begin{minipage}[b]{.25\textwidth}
\includegraphics[width=40mm,height=40mm]{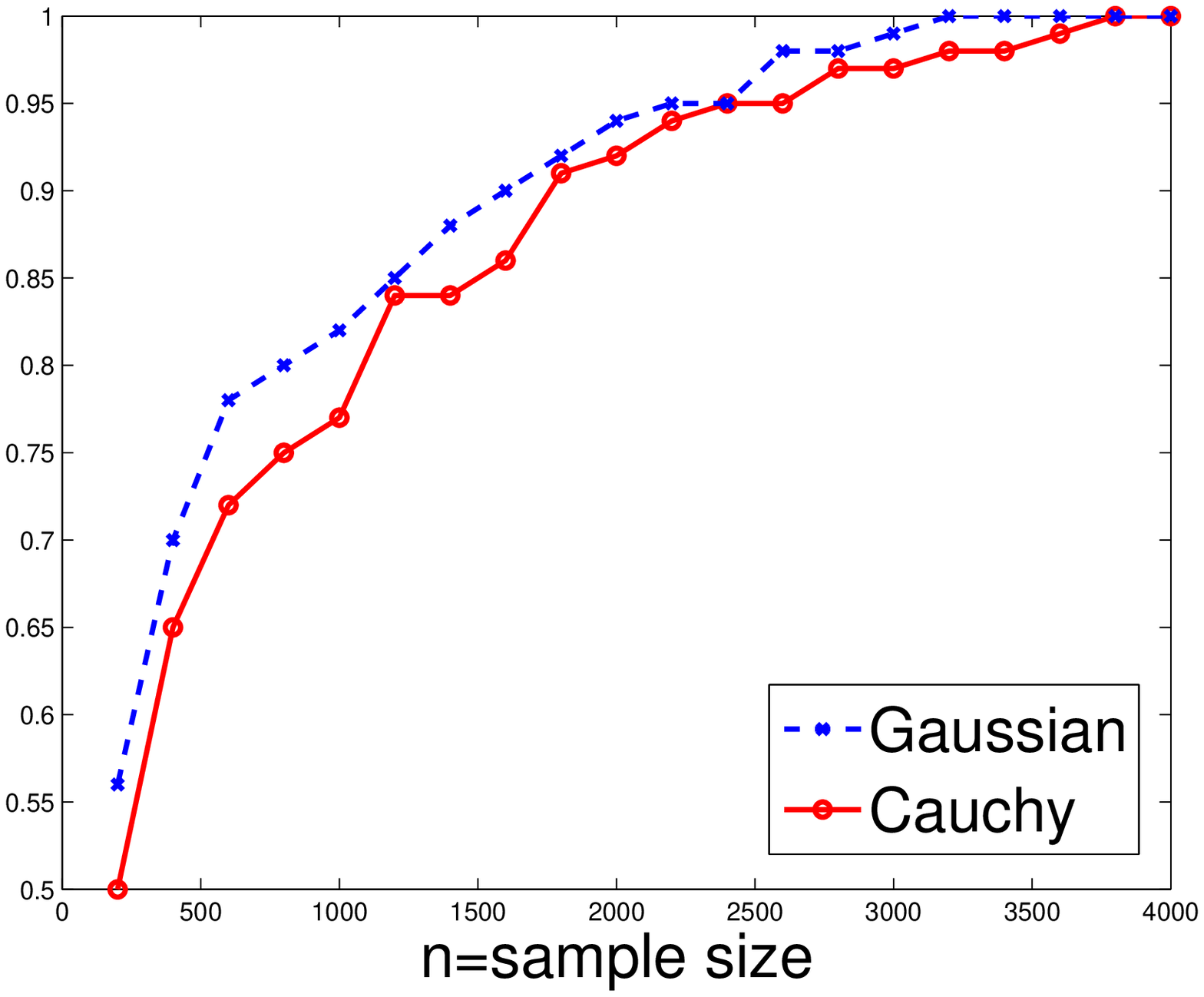}
\end{minipage}
\caption{\footnotesize{
Performance of $\widehat{G}_{n}$ in Algorithm 1 under misspecified kernel setting and $\left\{f*K_{\sigma_{1}}\right\}=\left\{f_{0}*K_{\sigma_{0}}\right\}$. Left to right:
(1) $W_{1}(\widehat{G}_{n},G_{*})$ under Gaussian case.
(2) $W_{1}(\widehat{G}_{n},G_{*})$ under Cauchy case.
(3) Percentage of time $\widehat{m}_{n}=2$ obtained from 100 runs.
}}
\label{figure_misspecify_kernel_variance}
\end{figure*}
\vspace{-10pt}
\subsection{Real Data}
We begin investigating the performance of Algorithm 1 on the well-known data set of the
Sodium-lithium countertransport (SLC) data \citep{Dudley-1991, Roeder-1994,
Ishwaran-2001}. This simple dataset includes red blood cell sodium-lithium
countertransport (SLC) activity data collected from 190 individuals. As being argued by
\cite{Roeder-1994}, the SLC activity data were believed to be derived from either mixture
of two normal distributions or mixture of three normal distributions. Therefore, we will fit
this data by using mixture of normal distributions with unknown mean and variance. We
choose the bandwidths $\sigma_{1}=\sigma_{0} = 0.05 $ and the tuning parameter
$C_{n}=\sqrt{3\log n}/\sqrt{2}$ where $n$ is the sample size. This follows BIC, which is
the criterion appropriate for modelling parameter estimation. The simulation result yields
$\widehat{m}_{n}=2$ while the values of $\widehat{G}_{n}$ are reported in Table
\ref{table_slc_activity}.

The SLC activity data was also considered in \cite{Mijawoo-2006} when the authors
achieved $\overline{m}_{n}=2$. In particular, they allowed the bandwidth $\sigma_{0}$ in
WS Algorithm to go to 0 and chose the tuning parameter $C_{n}=3/n$, which is inspired
by AIC criterion. They also obtained similar result of estimating the true number of
components when utilizing the minimum Kulback-Leibler divergence estimator (MKE) from
\citep{James-2001}. The values of parameter estimation from these two algorithms were
presented in Table 7 in \cite{Mijawoo-2006} where we will use them for the comparison
purpose with the results from Algorithm 1. Moreover, we also run the EM Algorithm to
determine the parameter estimation when we assume the data come from mixture of two
normal distributions. All the values of parameter estimation from these three algorithms
are included in Table \ref{table_slc_activity}. Finally, Figure \ref{figure_real_data}
represents the fits from parameter estimation of all the aforementioned algorithms to SLC
data. Even though the weights from Algorithm 1 are not very close to those from WS
Algorithm and EM Algorithm, the fit from Algorithm 1 is comparable to those from these
algorithms, i.e., their fits look fairly similar. As a consequence, the results from Algorithm 1
with SLC data are in agreement with those from several state-of-the-art algorithms in the
literature.
\begin{center}
\begin{table}
\captionsetup{justification=centering}
\footnotesize{
\begin{center}
\begin{tabular}{| p{2cm} | p{1cm} | p{1cm} | p{1cm} | p{1cm} | p{1cm}|p{1cm}|}
    \hline
    & $p_{1}$ & $p_{2}$ &
$\eta_{1}$ &
$\eta_{2}$ & $\tau_{1}$ & $\tau_{2}$ \\
\hline
Algorithm 1
& 0.264 & 0.736 & 0.368 & 0.231 & 0.118 & 0.065 \\
\hline
WS Algorithm
& 0.305 & 0.695 & 0.352 & 0.222 & 0.106 & 0.060 \\
\hline
MKE Algorithm
& 0.246 & 0.754 & 0.378 & 0.225 & 0.102 & 0.060 \\
\hline
EM Algorithm
& 0.328 & 0.672 & 0.363 & 0.227 & 0.115 & 0.058 \\
\hline
\end{tabular}
\caption{
\footnotesize{Summary of parameter estimates in SLC activity data from mixture of two normal distributions with Algorithm 1, WS Algorithm, MKE Algorithm, and EM Algorithm. Here, $p_{i},\eta_{i},\tau_{i}$ represents the weights, means, and variance respectively.}}
\label{table_slc_activity}
\end{center}}
\end{table}
\end{center}
\begin{figure*}[h]
\centering
\captionsetup{justification=centering}
\begin{minipage}[b]{.45\textwidth}
\includegraphics[width=60mm,height=50mm]{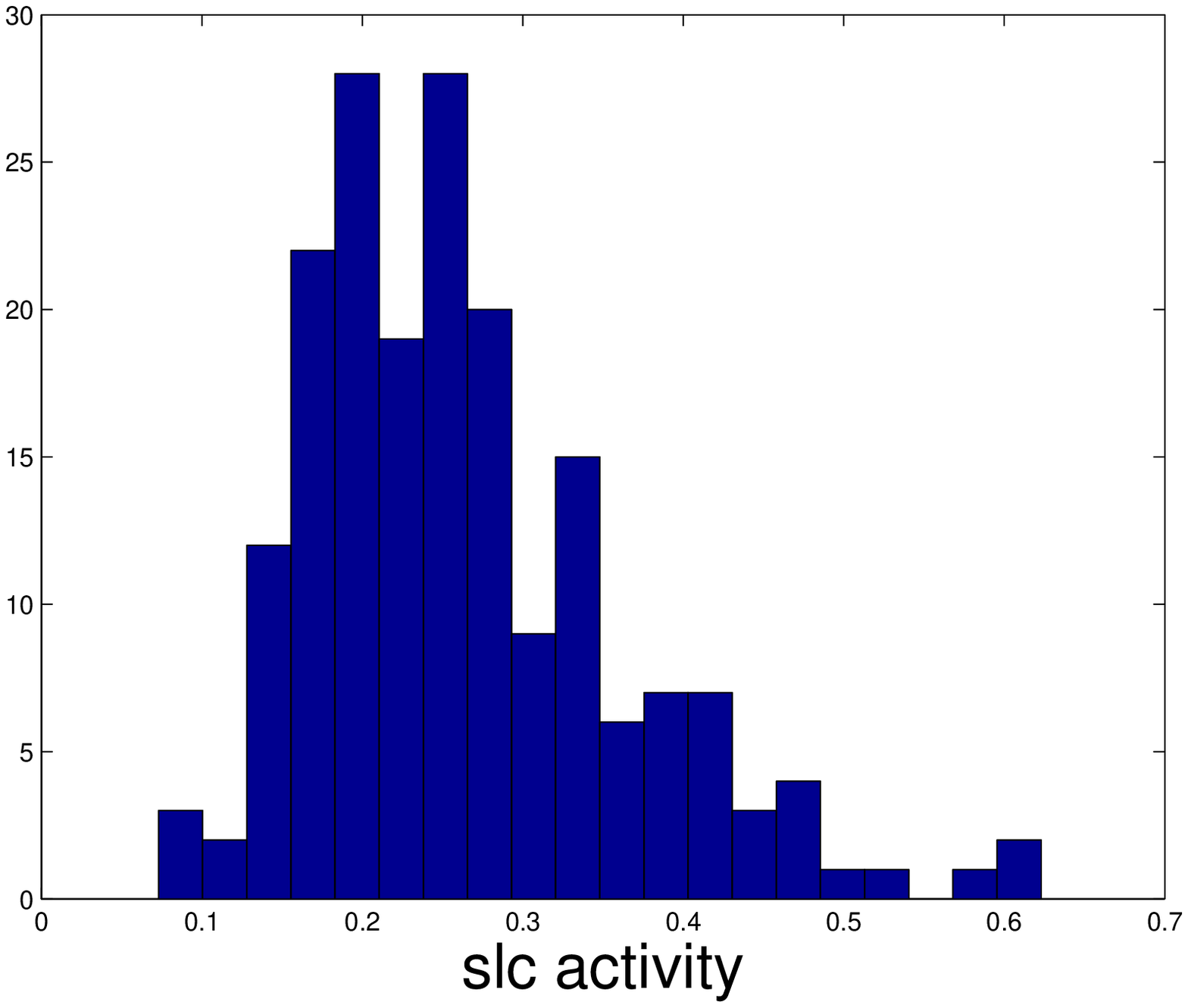}
\end{minipage}
\quad \quad
\begin{minipage}[b]{.45\textwidth}
\includegraphics[width=70mm,height=50mm]{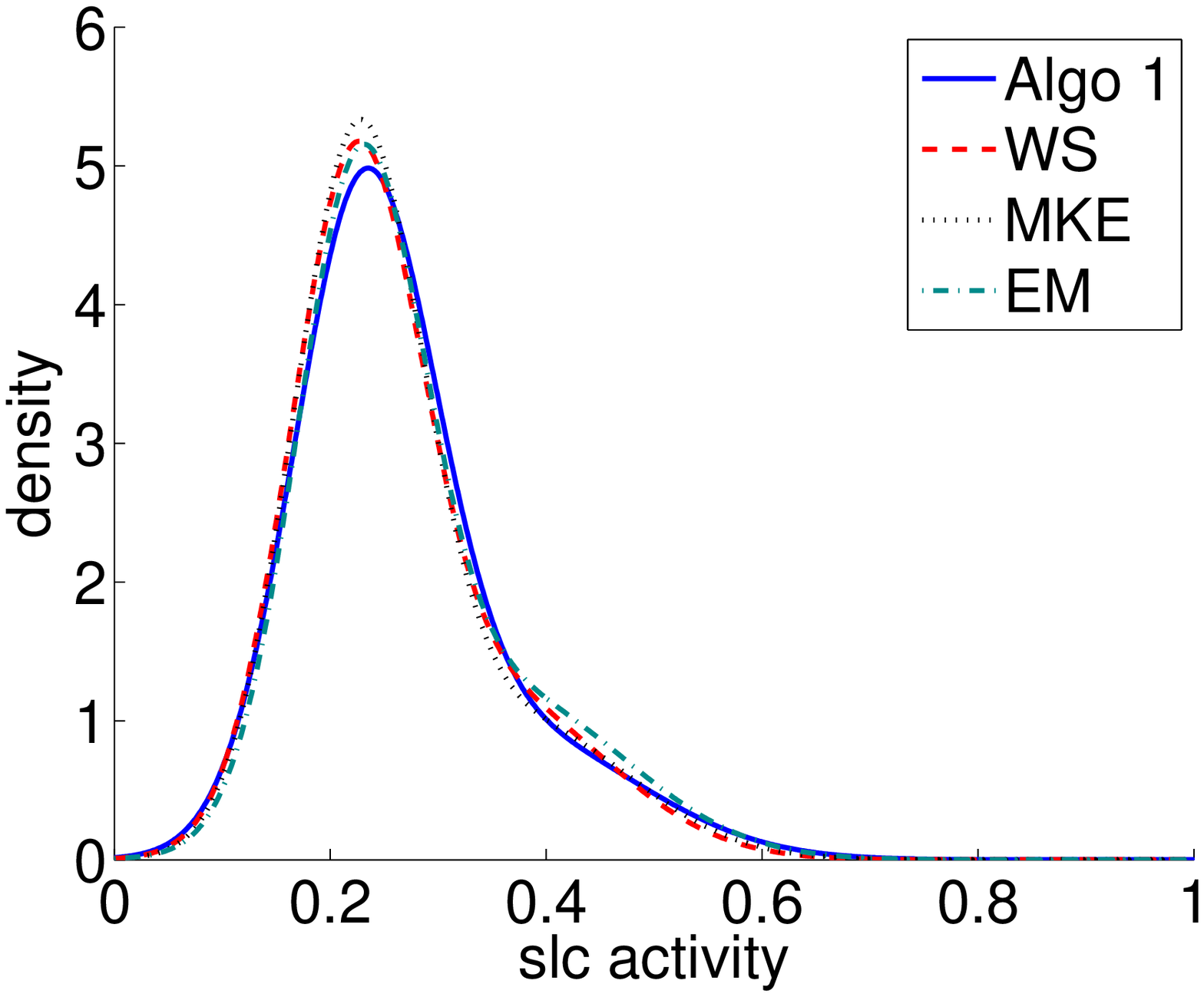}
\end{minipage}
\caption{\footnotesize{
From left to right:
(1) Histogram of SLC activity data.
(2) Density plot from mixture of two normals based on Algorithm 1, WS Algorithm, MKE Algorithm, and MLE.
}}
\label{figure_real_data}
\end{figure*}
\vspace{-10pt}
\section{Summaries and discussions}
\label{Section:discussion}
In this paper, we propose flexible robust estimators of mixing measure in finite mixture
models based on the idea of minimum Hellinger distance estimator, model selection criteria,
and super-efficiency phenomenon. Our estimators are shown to exhibit the consistency of
the number of components under both the well- and mis-specified kernel setting.
Additionally, the best possible convergence rates of parameter estimation are derived
under various settings of both kernel $f$ and $f_{0}$. Another salient feature of our
estimators is the flexible choice of bandwidths, which circumvents the subtle choices of
bandwidth from proposed estimators in the literature. However, there are still many open
questions relating to the performance or the extension of our robust estimators in the
paper. We give several examples:
\begin{itemize}
\item[•] As being mentioned in the paper, our estimator in Algorithm 1 and WS estimator
achieve the consistency of the number of components when the bandwidth $\sigma_{0}$
goes to 0 sufficiently slow. Can we determine the setting of bandwidth such that the
convergence rates of parameter estimation from these estimators are optimal, at least
under the well-specified kernel setting?
\item[•] Our analysis is based on the assumption that the parameters of $G_{0}$ belong
to the compact set $\Theta$. When $G_0$ is finitely supported, this is always the case,
but the set is unknown in advance and, in practice, we often do not know the range of the
true parameters. Therefore, it would be interesting to see whether our estimators in
Algorithm 1 and Algorithm 2 still achieve both the consistency of the number of
components and best possible convergence rates of parameter estimation when $\Theta=
\mathbb{R}^{d_{1}}$.
\item[•] Bayesian robust inference of mixing measure in finite mixture models has been of
interest recently, see for example \citep{Miller-2015}. Whether the idea of minimum
Hellinger distance estimator can be adapted to that setting is also an interesting direction
to consider in the future.
\end{itemize}

\section{Proofs of key results}
In this section, we provide the proofs of Theorem \ref{theorem:convergence_rate_mixing_measure}
and Theorem \ref{theorem:convergence_rate_mixing_measure_misspecified_strong} in
Section \ref{Section:minimum_Hellinger}. The remaining proofs are given in the Appendices.
\label{Section:proof}
\paragraph{PROOF OF THEOREM \ref{theorem:convergence_rate_mixing_measure}}
We divide the main argument into three key steps:
\paragraph{Step 1:} $\widehat{m}_{n} \to k_{0}$ almost surely. The proof of this step follows the argument from \citep{Leroux-1992}. In fact, for any positive integer $m$ we denote
\begin{eqnarray}
G_{0,m}=\mathop {\arg \min}\limits_{G \in \mathcal{O}_{m}}{h(p_{G,f_{0}} * K_{\sigma_{0}}, p_{G_{0},f_{0}}*K_{\sigma_{0}})}. \nonumber
\end{eqnarray}
Now, as $n \to \infty$ we have almost surely that
\begin{eqnarray}
h(p_{\widehat{G}_{n,m},f_{0}}*K_{\sigma_{0}},P_{n}*K_{\sigma_{0}}) - h(p_{\widehat{G}_{n,m+1},f_{0}}*K_{\sigma_{0}},P_{n}*K_{\sigma_{0}}) \to d_{m},\nonumber
\end{eqnarray}
where $d_{m}=h(p_{G_{0,m},f_{0}}*K_{\sigma_{0}},p_{G_{0},f_{0}}*K_{\sigma_{0}})-
h(p_{G_{0,m+1},f_{0}}*K_{\sigma_{0}},p_{G_{0},f_{0}}*K_{\sigma_{0}})$ and the limit
is due to the fact that $h(P_{n} * K_{\sigma_{0}}, p_{G_{0},f_{0}} *K_{\sigma_{0}}) \to
0$ almost surely for all $\sigma_{0}>0$. 
From the formulation of Step 2 in Algorithm 1 and the fact that $C_{n}n^{-1/2} \to 0$ as $n \to \infty$, we obtain
\begin{eqnarray}
h(p_{\widehat{G}_{n,\widehat{m}_{n}},f_{0}}*K_{\sigma_{0}},P_{n}*K_{\sigma_{0}}) - h(p_{\widehat{G}_{n,\widehat{m}_{n}+1},f_{0}}*K_{\sigma_{0}},P_{n}*K_{\sigma_{0}}) \leq C_{n}n^{-1/2} \to 0. \nonumber
\end{eqnarray}
Therefore, to demonstrate that $\widehat{m}_{n} \to
k_{0}$ almost surely, it is sufficient to prove that $d_{m}=0$ as $m \geq k_{0}$ and
$d_{m}>0$ as $m < k_{0}$. In fact, as $m \geq k_{0}$, we have $\inf \limits_{G \in
\mathcal{O}_{m}}{h(p_{G,f_{0}} *K_{\sigma_{0}} ,p_{G_{0},f_{0}}*K_{\sigma_{0}})}
=0$. Therefore, $d_{m}=0$ as $m \geq k_{0}$.

When $m<k_{0}$, we assume that $d_{m}=0$, i.e., $h(p_{G_{0,m},f_{0}}*K_{\sigma_{0}},p_{G_{0},f_{0}}*K_{\sigma_{0}})=h(p_{G_{0,m+1},f_{0}}*K_{\sigma_{0}},p_{G_{0},f_{0}}*K_{\sigma_{0}})$. It implies that
\begin{eqnarray}
h(p_{G_{0,m},f_{0}}*K_{\sigma_{0}},p_{G_{0},f_{0}}*K_{\sigma_{0}}) \leq h(p_{G,f_{0}} * K_{\sigma_{0}},p_{G_{0},f_{0}}*K_{\sigma_{0}}) \ \ \forall \ G \in \mathcal{O}_{m+1}. \nonumber
\end{eqnarray}
For any $\epsilon>0$, we choose $G=(1-\epsilon) G_{0,m}+ \epsilon\delta_{\theta}$ where $\theta \in \Theta$ is some component. The inequality in the above display implies that
\begin{eqnarray}
\int {(p_{G_{0},f_{0}}*K_{\sigma_{0}}(x))^{1/2}\biggr(\biggr[(1-\epsilon) p_{G_{0,m},f_{0}}*K_{\sigma_{0}}(x)+\epsilon f_0*K_{\sigma_{0}}(x|\theta) \biggr]^{1/2}} \nonumber \\
-(p_{G_{0,m},f_{0}}*K_{\sigma_{0}}(x))^{1/2}\biggr)\textrm{d}x \leq 0. \nonumber
\end{eqnarray}
As $\epsilon \to 0$, the above inequality divided by $\epsilon$ becomes
\begin{eqnarray}
\int {(p_{G_{0},f_{0}}*K_{\sigma_{0}}(x))^{1/2}(p_{G_{0,m},f_{0}}*K_{\sigma_{0}}(x))^{1/2}}\textrm{d}x \geq \nonumber \\
\int {(p_{G_{0},f_{0}}*K_{\sigma_{0}}(x))^{1/2}f_{0}*K_{\sigma_{0}}(x|\theta) (p_{G_{0,m},f_{0}}*K_{\sigma_{0}}(x))^{-1/2}}\textrm{d}x. \nonumber
\end{eqnarray}
Now, by choosing $\theta=\theta_{i}^{0}$ for all $1 \leq i \leq k_{0}$, as we sum up the right hand side of the above inequality, we obtain
\begin{eqnarray}
& & \int {(p_{G_{0},f_{0}}*K_{\sigma_{0}}(x))^{1/2}(p_{G_{0,m},f_{0}}*K_{\sigma_{0}}(x))^{1/2}}\textrm{d}x \nonumber \\
& & \hspace{5 em} \geq  \int (p_{G_{0},f_{0}}*K_{\sigma_{0}}(x))^{1/2}\biggr(\sum \limits_{i=1}^{k_{0}}{p_{i}^{0}f_{0}*K_{\sigma_{0}}(x|\theta_{i}^{0})}\biggr)(p_{G_{0,m},f_{0}}*K_{\sigma_{0}}(x))^{-1/2} \nonumber \\
& & \hspace{5 em} \geq \int (p_{G_{0},f_{0}}*K_{\sigma_{0}}(x))^{3/2}(p_{G_{0,m},f_{0}}*K_{\sigma_{0}}(x))^{-1/2} \textrm{d}x
 \geq  1 \nonumber
\end{eqnarray}
where the final inequality is due to the inequality $\int q_{1}^{3/2}(x)q_{2}^{-1/2}(x)dx \geq 1$ for any two density functions $q_{1}(x)$ and $q_{2}(x)$. Therefore, we have $h(p_{G_{0,m},f_{0}}*K_{\sigma_{0}},p_{G_{0},f_{0}}
*K_{\sigma_{0}})=0$. Due to the identifiability assumption of $f_{0}*K_{\sigma_{0}}$,
the previous equation implies that $G_{0,m} \equiv G_{0}$, which is a contradiction as $m
<k_{0}$. Thus, we have $d_{m}>0$ for any $m<k_{0}$. We achieve the conclusion that $
\widehat{m}_{n} \to k_{0}$ almost surely.
\paragraph{Step 2:} $h(P_{n}*K_{\sigma_{0}},p_{G_{0},f_{0}}*K_{\sigma_{0}})=O_{p}
\biggr(\sqrt{\dfrac{\Psi(G_{0},\sigma_{0})}{n}}\biggr)$. Indeed, by means of Taylor
expansion up to the first order, we have
\begin{eqnarray}
& & h^{2}(P_{n}*K_{\sigma_{0}},p_{G_{0},f_{0}}*K_{\sigma_{0}}) \nonumber \\
& & \hspace{3 em} = \int {\biggr(1-\sqrt{1+\dfrac{P_{n}*K_{\sigma_{0}}(x)-p_{G_{0},f_{0}}*K_{\sigma_{0}}(x)}{p_{G_{0},f_{0}}*K_{\sigma_{0}}(x)}}\biggr)^{2}p_{G_{0},f_{0}}*K_{\sigma_{0}}(x)}\textrm{d}x \nonumber \\
& & \hspace{3 em} \simeq  \dfrac{1}{4}\int {\dfrac{(P_{n}*K_{\sigma_{0}}(x)-p_{G_{0},f_{0}}*K_{\sigma_{0}}(x))^{2}}{p_{G_{0},f_{0}}*K_{\sigma_{0}}(x)}}\textrm{d}x. \nonumber
\end{eqnarray}
Notice that,
\begin{eqnarray}
E\biggr(\int {\dfrac{(P_{n}*K_{\sigma_{0}}(x)-p_{G_{0},f_{0}}*K_{\sigma_{0}}(x))^{2}}{p_{G_{0},f_{0}}*K_{\sigma_{0}}(x)}}\textrm{d}x\biggr)=\int {\dfrac{\text{Var}(P_{n}*K_{\sigma_{0}}(x))}{p_{G_{0},f_{0}}*K_{\sigma_{0}}(x)}}\textrm{d}x, \nonumber
\end{eqnarray}
From assumption (P.2), we obtain ${\displaystyle \int {\dfrac{\text{Var}(P_{n}
*K_{\sigma_{0}}(x))}{p_{G_{0},f_{0}}*K_{\sigma_{0}}(x)}}\textrm{d}x=O
\biggr(\dfrac{\Psi(G_{0},\sigma_{0})}{n}\biggr)}$. It follows that
\begin{eqnarray}
E\biggr(\int {\dfrac{(P_{n}*K_{\sigma_{0}}(x)-p_{G_{0},f_{0}}*K_{\sigma_{0}}(x))^{2}}{p_{G_{0},f_{0}}*K_{\sigma_{0}}(x)}}\textrm{d}x\biggr)=
O\biggr(\dfrac{\Psi(G_{0},\sigma_{0})}{n}\biggr). \nonumber
\end{eqnarray}
Therefore, we achieve $h(P_{n}*K_{\sigma_{0}},p_{G_{0},f_{0}}
*K_{\sigma_{0}})=O_{p}\biggr(\sqrt{\dfrac{\Psi(G_{0},\sigma_{0})}{n}}\biggr)$. It
implies that for any $\epsilon>0$, we can find $M_{\epsilon}>0$ and the index $N_{1}
(\epsilon) \geq 1$ such that
\begin{eqnarray}
P\biggr(h(P_{n}*K_{\sigma_{0}},p_{G_{0},f_{0}}*K_{\sigma_{0}})>M_{\epsilon}\sqrt{\dfrac{\Psi(G_{0},\sigma_{0})}{n}}\biggr)<\epsilon/2 \label{eqn:well_specified_first_inequality}
\end{eqnarray}
for all $n \geq N_{1}(\epsilon)$.
\paragraph{Step 3:} Now, denote the event $A=\left\{\widehat{m}_{n} \to k_{0} \
\text{as} \ n \to \infty \right\}$. Under this event, for each $\omega \in A$, we can find
$N(\omega)$ such that as $n \geq N(\omega)$, we have $\widehat{m}_{n}=k_{0}$. It
suggests that $\widehat{G}_{n} \in \mathcal{O}_{k_{0}}$ as $n \geq N(\omega)$.
Define $A_{m}=\left\{\omega \in A: \ \forall \ n \geq m \ \text{we have} \ \widehat{m}
_{n}=k_{0}\right\}$. From this definition, we obtain $A_{1} \subset A_{2} \ldots \subset
A_{m} \subset \ldots$ and $\bigcup \limits_{m=1}^{\infty}{A_{m}}=A$. Therefore, $\lim
\limits_{m \to \infty}{P(A_{m})}=P(A)=1$. Therefore, for any $\epsilon>0$ we can find
the corresponding index $N_{2}(\epsilon)$ such that $P(A_{N_{2}(\epsilon)})>1-
\epsilon/2$.

Now, for any $\omega \in A_{N_{2}(\epsilon)}$, we have $\widehat{m}_{n}=k_{0}$ as
$n \geq N_{2}(\epsilon)$. From assumptions (P.1) and the definition of $C_{1}
(\sigma_{0})$ in Theorem \ref{theorem:convergence_rate_mixing_measure}, we obtain
\begin{eqnarray}
C_{1}(\sigma)W_{1}(\widehat{G}_{n},G_{0}) & \leq & h(p_{\widehat{G}_{n},f_{0}}*K_{\sigma_{0}},p_{G_{0},f_{0}}*K_{\sigma_{0}}) \nonumber \\
& \leq & h(p_{\widehat{G}_{n},f_{0}}*K_{\sigma_{0}},P_{n}*K_{\sigma_{0}})+h(P_{n}*K_{\sigma_{0}},p_{G_{0},f_{0}}*K_{\sigma_{0}}) \nonumber \\
& \leq & 2 h(P_{n}*K_{\sigma_{0}},p_{G_{0},f_{0}}*K_{\sigma_{0}}). \label{eqn:well_specified_second_inequality}
\end{eqnarray}
Using the inequalities from \eqref{eqn:well_specified_first_inequality} and \eqref{eqn:well_specified_second_inequality}, we have
\begin{eqnarray}
P\biggr(W_{1}(\widehat{G}_{n},G_{0})>2M_{\epsilon}\sqrt{\dfrac{\Psi(G_{0},\sigma_{0})}{C_{1}^{2}(\sigma_{0})n}}\biggr)  =  P\biggr(\biggr(W_{1}(\widehat{G}_{n},G_{0})>2M_{\epsilon}\sqrt{\dfrac{\Psi(G_{0},\sigma_{0})}{C_{1}^{2}(\sigma_{0})n}}\biggr)\mathbbm{1}_{A_{N_{2}(\epsilon_{2})}^{c}}\biggr) \nonumber \\
 +  P\biggr(\biggr(W_{1}(\widehat{G}_{n},G_{0})>2M_{\epsilon}\sqrt{\dfrac{\Psi(G_{0},\sigma_{0})}{C_{1}^{2}(\sigma_{0})n}}\biggr)\mathbbm{1}_{A_{N_{2}(\epsilon)}}\biggr) \nonumber \\
 \leq  \epsilon/2 + P\biggr(\biggr(W_{1}(\widehat{G}_{n},G_{0})>2M_{\epsilon}\sqrt{\dfrac{\Psi(G_{0},\sigma_{0})}{C_{1}^{2}(\sigma_{0})n}}\biggr)\mathbbm{1}_{A_{N_{2}(\epsilon)}}\biggr) < \epsilon \nonumber
\end{eqnarray}
for all $n \geq \mathop {\max }{\left\{N_{1}(\epsilon),N_{2}(\epsilon)\right\}}$. We achieve the conclusion of the theorem.
\paragraph{PROOF OF THEOREM \ref{theorem:convergence_rate_mixing_measure_misspecified_strong}}
We divide our argument in the proof of this theorem into two key steps.

\paragraph{Step 1} $\widehat{m}_{n} \to k_{*}$ almost surely. Indeed, by carrying out
the same argument as that of Step 1 in the proof of Theorem
\ref{theorem:convergence_rate_mixing_measure} (here, we replace $f_{0}$ by $f$ and
$G_{0,m}$ by $G_{*,m}$, as $m<k_{*}$), we eventually obtain the following inequality
\begin{eqnarray}
\int {(p_{G_{0},f_{0}}*K_{\sigma_{0}}(x))^{1/2}(p_{G_{*,m},f}*K_{\sigma_{1}}(x))^{1/2}}\textrm{d}x \geq \nonumber \\
\int {(p_{G_{0},f_{0}}*K_{\sigma_{0}}(x))^{1/2}f*K_{\sigma_{1}}(x|\theta)(p_{G_{*,m},f}*K_{\sigma_{1}}(x))^{-1/2}}\textrm{d}x. \nonumber
\end{eqnarray}
for any $\theta \in \Theta$. By choosing $\theta \in \text{supp}(G_{*})$, which is the set of all support points of $G_{*}$, and sum over all of these components, we achieve
\begin{eqnarray}
\int {(p_{G_{0},f_{0}}*K_{\sigma_{0}}(x))^{1/2}(p_{G_{*,m},f}*K_{\sigma_{1}}(x))^{1/2}}\textrm{d}x \geq \nonumber \\
\int {(p_{G_{0},f_{0}}*K_{\sigma_{0}}(x))^{1/2}p_{G_{*},f}*K_{\sigma_{1}}(x)(p_{G_{*,m},f}*K_{\sigma_{1}}(x))^{-1/2}}\textrm{d}x. \nonumber
\end{eqnarray}
From the above inequality, we have
\begin{eqnarray}
\int {\biggr(\sqrt{p_{G_{*,m},f}*K_{\sigma_{1}}(x)}-\sqrt{p_{G_{*},f}*K_{\sigma_{1}}(x)}\biggr)^{2}\sqrt{\dfrac{p_{G_{0},f_{0}}*K_{\sigma_{0}}(x)}{p_{G_{*,m},f}*K_{\sigma_{1}}(x)}}}\textrm{d}x \nonumber \\
\leq 2\biggr(\int {\sqrt{p_{G_{0},f_{0}}*K_{\sigma_{0}}(x)}\sqrt{p_{G_{*,m},f}*K_{\sigma_{1}}(x)}}\textrm{d}x \nonumber \\
- \int {\sqrt{p_{G_{0},f_{0}}*K_{\sigma_{0}}(x)}\sqrt{p_{G_{*},f}*K_{\sigma_{1}}(x)}}\textrm{d}x\biggr) \leq 0, \nonumber
\end{eqnarray}
where the second inequality is due to the fact that $G_{*}$ minimizes $h(p_{G,f}
*K_{\sigma_{1}},p_{G_{0},f_{0}}*K_{\sigma_{0}})$ among all $G \in 
\overline{\mathcal{G}}$. The above inequality implies that $p_{G_{*,m},f}
*K_{\sigma_{1}}(x)=p_{G_{*},f}*K_{\sigma_{1}}(x)$ for almost surely $x \in \mathcal{X}
$. Due to the identifiability of $f*K_{\sigma_{1}}$, we obtain $G_{*,m} \equiv G_{*}$, 
which is a contradiction to the fact that $m<k_{*}$. Therefore, we achieve $\widehat{m}
_{n} \to k_{*}$ almost surely.
\paragraph{Step 2} Now, since $\widehat{m}_{n} \to k_{*}$ almost surely, using the 
same argument as Step 3 in the proof of Theorem 
\ref{theorem:convergence_rate_mixing_measure}, we can find $N(\epsilon)$ such that $
\widehat{m}_{n} = k_{*}$ for any $n \geq N(\epsilon)$ and such that 
$P(A_{N(\epsilon)})>1-\epsilon/2$ for any $\epsilon>0$. Additionally, since $\widehat{G}
_{n}=\widehat{G}_{n,\widehat{m}_{n}}$ minimizes $h(p_{G,f}*K_{\sigma_{1}}, P_{n}
*K_{\sigma_{0}})$ among all $G \in \mathcal{O}_{\widehat{m}_{n}}$, it follows that
\begin{eqnarray}
\int{\sqrt{p_{\widehat{G}_{n},f}*K_{\sigma_{1}}(x)}\sqrt{P_{n}*K_{\sigma_{0}}(x)}}\textrm{d}x \geq \int{\sqrt{p_{G_{*},f}*K_{\sigma_{1}}(x)}\sqrt{P_{n}*K_{\sigma_{0}}(x)}}\textrm{d}x. \nonumber
\end{eqnarray}
when $n \geq N(\epsilon)$. From this inequality, we obtain
\begin{eqnarray}
& & \hspace{-2em} \int{\biggr(\sqrt{p_{\widehat{G}_{n},f}*K_{\sigma_{1}}(x)}-\sqrt{p_{G_{*},f}*K_{\sigma_{1}}(x)}\biggr)\biggr(\sqrt{P_{n}*K_{\sigma_{0}}(x)}-\sqrt{p_{G_{0},f_{0}}*K_{\sigma_{0}}(x)}\biggr)}\textrm{d}x \geq  \label{eqn:misspecified_equation_one} \\
& & \hspace{-1em} \int{\sqrt{p_{G_{0},f_{0}}*K_{\sigma_{0}}(x)}\sqrt{p_{G_{*},f}*K_{\sigma_{1}}(x)}}\textrm{d}x -\int{\sqrt{p_{G_{0},f_{0}}*K_{\sigma_{0}}(x)}\sqrt{p_{\widehat{G}_{n},f}*K_{\sigma_{1}}(x)}}\textrm{d}x := B. \nonumber
\end{eqnarray}
By means of the key inequality in Lemma \ref{lemma:key_inequality_misspecified_setting}, we have
\begin{eqnarray}
B & \geq & \int{\sqrt{p_{\widehat{G}_{n},f}*K_{\sigma_{1}}(x)}\sqrt{\dfrac{p_{G_{0},f_{0}}*K_{\sigma_{0}}(x)}{p_{G_{*},f}*K_{\sigma_{1}}(x)}}}\textrm{d}x-\int{\sqrt{p_{G_{0},f_{0}}*K_{\sigma_{0}}(x)}\sqrt{p_{\widehat{G}_{n},f}*K_{\sigma_{1}}(x)}}\textrm{d}x \nonumber \\
& = & 2\biggr(h^{*}(p_{\widehat{G}_{n},f}*K_{\sigma_{1}},p_{G_{*},f}*K_{\sigma_{1}})\biggr)^{2} - B. \nonumber
\end{eqnarray}
It implies that $B \geq \biggr(h^{*}(p_{\widehat{G}_{n},f}*K_{\sigma_{1}},p_{G_{*},f}*K_{\sigma_{1}})\biggr)^{2}$. Plugging this inequality to \eqref{eqn:misspecified_equation_one} leads to
\begin{eqnarray}
C & := & \int{\biggr(\sqrt{p_{\widehat{G}_{n},f}*K_{\sigma_{1}}(x)}-\sqrt{p_{G_{*},f}*K_{\sigma_{1}}(x)}\biggr)\biggr(\sqrt{P_{n}*K_{\sigma_{0}}(x)}-\sqrt{p_{G_{0},f_{0}}*K_{\sigma_{0}}(x)}\biggr)}\textrm{d}x \nonumber \\
& \geq & \biggr(h^{*}(p_{\widehat{G}_{n},f}*K_{\sigma_{1}},p_{G_{*},f}*K_{\sigma_{1}})\biggr)^{2}. \label{eqn:misspecified_equation_second}
\end{eqnarray}
For the left hand side (LHS) of \eqref{eqn:misspecified_equation_second}, we have the following inequality
\begin{eqnarray}
C  & \leq &  \biggr\|(p_{\widehat{G}_{n},f}*K_{\sigma_{1}})^{1/4}-(p_{G_{*},f}*K_{\sigma_{1}})^{1/4}\biggr\|_{\infty} \int \biggr((p_{\widehat{G}_{n},f}*K_{\sigma_{1}}(x))^{1/4} \nonumber \\
&&\hspace{7 em}+(p_{G_{*},f}*K_{\sigma_{1}}(x))^{1/4}\biggr)\biggr|\sqrt{P_{n}*K_{\sigma_{0}}(x)}-\sqrt{p_{G_{0},f_{0}}*K_{\sigma_{0}}(x)}\biggr|\textrm{d}x \nonumber \\
& \leq &  \biggr\|(p_{\widehat{G}_{n},f}*K_{\sigma_{1}})^{1/4}-(p_{G_{*},f}*K_{\sigma_{1}})^{1/4}\biggr\|_{\infty} \biggr\|(p_{\widehat{G}_{n},f}*K_{\sigma_{1}})^{1/4}+(p_{G_{*},f}*K_{\sigma_{1}})^{1/4}\biggr\|_{2} \nonumber \\
& & \hspace{18 em} \times \sqrt{2}h(P_{n}*K_{\sigma_{0}},p_{G_{0},f_{0}}*K_{\sigma_{0}}) \label{eqn:misspecified_equation_second_one}
\end{eqnarray}
where the last inequality is due to Holder's inequality. Now, our next argument will be divided into two small key steps.
\paragraph{Step 2.1} With assumption (M.3), we will show that
\begin{eqnarray}
D = \biggr\|(p_{G,f}*K_{\sigma_{1}})^{1/4}-(p_{G_{*},f}*K_{\sigma_{1}})^{1/4}\biggr\|_{\infty} \leq M_{3}(\sigma_{1})W_{1}(G,G_{0}) \label{eqn:misspecified_equation_second_second}
\end{eqnarray}
for any $G \in \mathcal{O}_{k_{*}}$ where $M_{3}(\sigma_{1})$ is some positive constant.

In fact, denote $G=\mathop {\sum }\limits_{i=1}^{k}{p_{i}\delta_{\theta_{i}}}$ where $k 
\leq k_{*}$ and $G_{*}=\mathop {\sum }\limits_{i=1}^{k_{*}}{p_{i}^{*}
\delta_{\theta_{i}^{*}}}$. Using the same proof argument as that of  
\eqref{eqn:algorithm_2_simple_wellspecified_second} in the proof of Proposition 
\ref{proposition:sufficient_condition_wellspecified_setting}, there exists a positive number 
$\epsilon_{0}$ depending only $G_{*}$ such that as long as $W_{1}(G,G_{*}) \leq 
\epsilon_{0}$, $G$ will have exactly $k_{*}$ components, i.e., $k=k_{*}$. Additionally, up 
to the relabelling of the components of $G$, we also obtain that $|p_{i}-p_{i}^{*}| \leq 
c_{0}W_{1}(G,G_{*})$ where $c_{0}$ is some positive constant depending only on 
$G_{*}$. Therefore, by choosing $G$ such that $W_{1}(G,G_{*}) \leq C_{0}=\min {\left\{\epsilon_{0}, \min \limits_{1 \leq i \leq k_{*}}{\dfrac{p_{i}^{*}}{2c_{0}}}\right\}}$, we 
achieve $|p_{i}-p_{i}^{*}| \leq \min \limits_{1 \leq i \leq k_{*}}{p_{i}^{*}}/2$. Hence, 
$p_{i} \geq \min \limits_{1 \leq i \leq k_{*}}{p_{i}^{*}}/2$ for all $1 \leq i \leq k_{*}$. 
Under this setting of $G$, for any coupling $\vec{q}$ of $\vec{p}=(p_{1},\ldots,p_{k})$ 
and $\vec{p^{*}}=(p_{1}^{*},\ldots,p_{k_{*}}^{*})$, by means of triangle inequality we 
obtain
\begin{eqnarray}
D & = & \biggr\|\dfrac{p_{G,f}*K_{\sigma_{1}}-p_{G_{*},f}*K_{\sigma_{1}}}{\left\{(p_{G,f}*K_{\sigma_{1}})^{1/4}+(p_{G_{*},f}*K_{\sigma_{1}})^{1/4}\right\}\left\{(p_{G,f}*K_{\sigma_{1}})^{1/2}+(p_{G_{*},f}*K_{\sigma_{1}})^{1/2}\right\}}\biggr\|_{\infty} \nonumber \\
& & \hspace{-3 em} \leq  \sum \limits_{i,j} {q_{ij}\biggr\|\dfrac{f*K_{\sigma_{1}}(x|\theta_{i})-f*K_{\sigma_{1}}(x|\theta_{j}^{*})}{\left\{(p_{G,f}*K_{\sigma_{1}})^{1/4}+(p_{G_{*},f}*K_{\sigma_{1}})^{1/4}\right\}\left\{(p_{G,f}*K_{\sigma_{1}})^{1/2}+(p_{G_{*},f}*K_{\sigma_{1}})^{1/2}\right\}}\biggr\|_{\infty}}. \nonumber
\end{eqnarray}
where the ranges of $i,j$ in the above sum satisfy $1 \leq i, j \leq k_{*}$. It is clear that for any $
\alpha \in \left\{1/2,1/4\right\}$
\begin{eqnarray}
& & (p_{G,f}*K_{\sigma_{1}}(x))^{\alpha}+(p_{G_{*},f}*K_{\sigma_{1}}(x))^{\alpha} \nonumber \\
& & \hspace{10em} > \min {\left\{p_{i}^{\alpha},(p_{j}^{*})^{\alpha}\right\}}\left\{(f*K_{\sigma_{1}}(x|\theta_{i}))^{\alpha}+(f*K_{\sigma_{1}}(x|\theta_{j}^{*}))^{\alpha}\right\} \nonumber \\
& & \hspace{10em} > \min \limits_{1 \leq i \leq k_{*}}{\left(\dfrac{p_{i}^{*}}{2}\right)^{\alpha}}\left\{f*K_{\sigma_{1}}(x|\theta_{i}))^{\alpha}+(f*K_{\sigma_{1}}(x|\theta_{j}^{*}))^{\alpha}\right\}. \nonumber
\end{eqnarray}
Therefore, we eventually achieve that
\begin{eqnarray}
D \lesssim \sum \limits_{i,j}{q_{ij}\biggr\|(f*K_{\sigma_{1}}(x|\theta_{i}))^{1/4}-(f*K_{\sigma_{1}}(x|\theta_{j}^{*}))^{1/4}\biggr\|_{\infty}}. \nonumber
\end{eqnarray}
Now, due to assumption (M.3) and mean value theorem, we achieve for any $x \in \mathcal{X}$ that
\begin{eqnarray}
\left|f*K_{\sigma_{1}}(x|\theta_{i}))^{1/4}-(f*K_{\sigma_{1}}(x|\theta_{j}^{*}))^{1/4}\right| \leq M_{2}(\sigma_{1})||\theta_{i}-\theta_{j}^{*}||. \nonumber
\end{eqnarray}
Thus, for any coupling $\vec{q}$ of $\vec{p}$ and $\vec{p^{*}}$
\begin{eqnarray}
D \lesssim \sum \limits_{i,j}{q_{ij}||\theta_{i}-\theta_{j}^{*}||}. \nonumber
\end{eqnarray}
As a consequence, we eventually have
\begin{eqnarray}
D \lesssim \inf \limits_{\vec{q} \in \mathcal{Q}(\vec{p},\vec{p^{*}})}{\sum \limits_{i,j}{q_{ij}||\theta_{i}-\theta_{j}^{*}||}} = W_{1}(G,G_{*}) \nonumber
\end{eqnarray}
for any $G \in \mathcal{O}_{k_{*}}$ such that $W_{1}(G,G_{*}) \leq C_{0}$. Now, for any $G \in \mathcal{O}_{k_{*}}$ such that $W_{1}(G,G_{*}) > C_{0}$, as $D$ is bounded, it is clear that $D \lesssim W_{1}(G,G_{*})$. In sum, we achieve inequality \eqref{eqn:misspecified_equation_second_second}.
\paragraph{Step 2.2} Due to assumption (M.2), we also can quickly verify that
\begin{eqnarray}
\biggr\|(p_{\widehat{G}_{n},f}*K_{\sigma_{1}})^{1/4}+(p_{G_{*},f}*K_{\sigma_{1}})^{1/4}\biggr\|_{2} \leq 2\sqrt{k_{*}M_{1}(\sigma_{1})}. \label{eqn:misspecified_equation_second_third}
\end{eqnarray}
Combining \eqref{eqn:misspecified_equation_second_one}, \eqref{eqn:misspecified_equation_second_second}, \eqref{eqn:misspecified_equation_second_third}, we ultimately achieve that
\begin{eqnarray}
\biggr(h^{*}(p_{\widehat{G}_{n},f}*K_{\sigma_{1}},p_{G_{*},f}*K_{\sigma_{1}})\biggr)^{2} \leq  M(\sigma_{1}) W_{1}(\widehat{G}_{n},G_{*}) h(P_{n}*K_{\sigma_{0}},p_{G_{0},f_{0}}*K_{\sigma_{0}}) \nonumber
\end{eqnarray}
where $M(\sigma_{1})$ is some positive constant. Due to assumption (M.1), from the result of Lemma \ref{lemma:lower_bound_modified_Hellinger_Wasserstein} and the definition of $C_{*,1}(\sigma_{1})$, we have
\begin{eqnarray}
h^{*}(p_{\widehat{G}_{n},f}*K_{\sigma_{1}},p_{G_{*},f}*K_{\sigma_{1}}) \gtrsim C_{*,1}(\sigma_{1})W_{1}(\widehat{G}_{n},G_{*}). \nonumber
\end{eqnarray}
Combining the above results with the bound $h(P_{n}*K_{\sigma_{0}},p_{G_{0},f_{0}}*K_{\sigma_{0}})=O_{p}\biggr(\sqrt{\dfrac{\Psi(G_{0},\sigma_{0})}{n}}\biggr)$ from Step 2 in the proof of Theorem \ref{theorem:convergence_rate_mixing_measure}, we quickly obtain the conclusion of the theorem.
\bibliography{Nhat,NPB,Nguyen}
\newpage

\appendix
\section*{Appendix A}
In this Appendix, we provide the proofs of several key results in Section \ref{Section:minimum_Hellinger} and Section \ref{Section:Another_approach}.
\label{appendix_A}
\paragraph{PROOF OF LEMMA \ref{lemma:first_order_convolution}}
The proof of this lemma is a straightforward application of the Fourier transform. In fact, for any finite $k$ different elements
$\theta_{1}, \ldots , \theta_{k} \in \Theta$,
assume that we have $\alpha_{i} \in \mathbb{R},\beta_{i} \in \mathbb{R}^{d_{1}}$ (for all $i=1,\ldots,k$)
such that for almost all $x$
\begin{eqnarray}
\mathop {\sum }\limits_{i=1}^{k}{\alpha_{i}f_{0}*K_{\sigma_{0}}(x|\theta_{i})+\beta_{i}^{T}\dfrac{\partial{f_{0}*K_{\sigma_{0}}}}{\partial{\theta}}(x|\theta_{i})}=0, \nonumber
\end{eqnarray}
By means of the Fourier transformation on both sides of the above equation, we obtain for all $t \in \mathbb{R}^{d}$ that
\begin{eqnarray}
\widehat{K_{\sigma_{0}}}(t)\biggr(\sum \limits_{i=1}^{k}{\alpha_{i}\widehat{f_{0}}(t|\theta_{i})+\beta_{i}^{T}\dfrac{\partial \widehat{f_{0}}}{\partial{\theta}}(t|\theta_{i})}\biggr)=0. \nonumber
\end{eqnarray}
Since $\widehat{K_{\sigma_{0}}}(t)=\widehat{K}(\sigma_{0} t) \neq 0$ for almost all $t \in \mathbb{R}^{d}$ and $f$ is identifiable in the first order, we obtain that $\alpha_{i}=0, \beta_{i}=\vec{0} \in \mathbb{R}^{d_{1}}$ for all $1 \leq i \leq k$. We achieve the conclusion of this lemma.

\paragraph{PROOF OF LEMMA \ref{lemma:lower_bound_modified_Hellinger_Wasserstein}}
We denote the following weighted version of the total variation distance
\begin{eqnarray}
V^{*}(p_{G_{1},f}*K_{\sigma_{1}},p_{G_{2},f}*K_{\sigma_{1}})=\dfrac{1}{2}\int \left| p_{G_{1},f}*K_{\sigma_{1}}(x)-p_{G_{2},f}*K_{\sigma_{1}}(x)\right| \nonumber \\
\times \biggr(\dfrac{p_{G_{0},f_{0}}*K_{\sigma_{0}}(x)}{p_{G_{*},f}*K_{\sigma_{1}}(x)}\biggr)^{1/4}\textrm{d}x. \nonumber
\end{eqnarray}
for any two mixing measures $G_{1},G_{2} \in \overline{\mathcal{G}}$. By Holder's inequality, we have
\begin{eqnarray}
& & \hspace{-2.5 em}V^{*}(p_{G,f}*K_{\sigma_{1}},p_{G_{*},f}*K_{\sigma_{1}})  \leq  \dfrac{1}{\sqrt{2}}h^{*}(p_{G,f}*K_{\sigma_{1}},p_{G_{*},f}*K_{\sigma_{1}}) \nonumber \\
& & \hspace{10 em} \times \biggr(\int{\biggr(\sqrt{p_{G,f}*K_{\sigma_{1}}(x)}+\sqrt{p_{G_{*},f}*K_{\sigma_{1}}(x)}\biggr)^{2}}\textrm{d}x\biggr)^{1/2} \nonumber \\
& & \hspace{10 em}  \leq \sqrt{2}h^{*}(p_{G,f}*K_{\sigma_{1}},p_{G_{*},f}*K_{\sigma_{1}}). \label{eqn:misspecify_kernel_inequality_one}
\end{eqnarray}
Therefore, in order to obtain the conclusion of the lemma it suffices to demonstrate that
\begin{eqnarray}
\mathop {\inf }\limits_{G \in \mathcal{O}_{k_{*}}}{V^{*}(p_{G,f}*K_{\sigma_{1}},p_{G_{*},f}*K_{\sigma_{1}})/W_{1}(G,G_{*})} > 0. \label{eqn:misspecify_kernel_inequality_two}
\end{eqnarray}
Firstly, we will show that
\begin{eqnarray}
\mathop {\lim }\limits_{\epsilon \to 0}{\mathop {\inf }\limits_{G \in \mathcal{O}_{k_{*}}}{\biggr\{\dfrac{V^{*}(p_{G,f}*K_{\sigma_{1}},p_{G_{*},f}*K_{\sigma_{1}})}{W_{1}(G,G_{*})}: \ W_{1}(G,G_{*}) \leq \epsilon \biggr\}}} > 0. \nonumber
\end{eqnarray}
Assume that the above inequality does not hold. There exists a sequence $G_{n} \in \mathcal{O}_{k_{*}}$ such that $W_{1}(G_{n},G_{*}) \to 0$ and
$V^{*}(p_{G_{n},f}*K_{\sigma_{1}},p_{G_{*},f}*K_{\sigma_{1}})/W_{1}(G_{n},G_{*}) \to 0$. By means of Fatou's lemma, we obtain
\begin{eqnarray}
0 & = & \liminf \limits_{n \to \infty} \dfrac{V^{*}(p_{G_{n},f}*K_{\sigma_{1}},p_{G_{*},f}*K_{\sigma_{1}})}{W_{1}(G_{n},G_{*})} \nonumber \\
& \geq & \dfrac{1}{2}\int \liminf \limits_{n \to \infty}\dfrac{\left|p_{G_{n},f}*K_{\sigma_{1}}(x)-p_{G_{*},f}*K_{\sigma_{1}}(x)\right|\biggr(\dfrac{p_{G_{0},f_{0}}*K_{\sigma_{0}}(x)}{p_{G_{*},f}*K_{\sigma_{1}}(x)}\biggr)^{1/4}}{W_{1}(G_{n},G_{*})} \textrm{d}x. \nonumber
\end{eqnarray}
Therefore, for almost surely $x \in \mathcal{X}$, we have
\begin{eqnarray}
\liminf \limits_{n \to \infty}\dfrac{\left|p_{G_{n},f}*K_{\sigma_{1}}(x)-p_{G_{*},f}*K_{\sigma_{1}}(x)\right|\biggr(\dfrac{p_{G_{0},f_{0}}*K_{\sigma_{0}}(x)}{p_{G_{*},f}*K_{\sigma_{1}}(x)}\biggr)^{1/4}}{W_{1}(G_{n},G_{*})}=0. \label{eqn:lemma_misspecified_kernel_equality_one}
\end{eqnarray}
Since $W_{1}(G_{n},G_{*}) \to 0$ and $G_{n} \in \mathcal{O}_{k_{*}}$, we can find a 
subsequence of $k_{n}$ such that $k_{n}=k_{*}$. Without loss of generality, we replace 
that subsequence of $k_{n}$ by its whole sequence. Then, $G_{n}$ will have exactly 
$k_{*}$ components for all $n \geq 1$. From here, by using the same argument as that in 
the proof of Theorem 3.1 in \cite{Ho-Nguyen-EJS-16}, equality 
\eqref{eqn:lemma_misspecified_kernel_equality_one} cannot happen -- a contradiction.

Therefore, we can find a positive constant number $\epsilon_{0}$ such that $V^{*}
(p_{G,f}*K_{\sigma_{1}},p_{G_{*},f}*K_{\sigma_{1}}) \gtrsim W_{1}(G,G_{*})$ for any 
$W_{1}(G,G_{*}) \leq \epsilon_{0}$. Now, to obtain the conclusion of 
\eqref{eqn:misspecify_kernel_inequality_two}, we only need to verify that
\begin{eqnarray}
\mathop {\inf }\limits_{G \in \mathcal{O}_{k_{*}}:W_{1}(G,G_{*})>\epsilon_{0}}{V^{*}(p_{G,f}*K_{\sigma_{1}},p_{G_{*},f}*K_{\sigma_{1}})/W_{1}(G,G_{*})} > 0. \nonumber
\end{eqnarray}
In fact, if the above statement does not hold, we can find a sequence $G_{n}' \in 
\mathcal{O}_{k_{*}}$ such that $W_{1}(G_{n},G_{*})>\epsilon_{0}$ and $V^{*}
(p_{G_{n}',f}*K_{\sigma_{1}},p_{G_{*},f}*K_{\sigma_{1}})/W_{1}(G_{n}',G_{*}) \to 0$. 
Since $\Theta$ is a closed bounded set, we can find $G' \in \mathcal{O}_{k_{*}}$ such 
that a subsequence of $G_{n}'$ satisfies $W_{1}(G_{n}',G') \to 0$ and $W_{1}
(G',G_{*})>\epsilon_{0}$. Without loss of generality, we replace that subsequence of 
$G_{n}'$ by its whole sequence. Therefore, $V^{*}(p_{G_{n}',f}
*K_{\sigma_{1}},p_{G_{*},f}*K_{\sigma_{1}}) \to 0$ as $n \to \infty$. Since $W_{1}
(G_{n}',G') \to 0$, due to the first order uniform Liptschitz property of $f*K_{\sigma_{1}}
$ we obtain $p_{G_{n}',f}*K_{\sigma_{1}}(x) \to p_{G',f}*K_{\sigma_{1}}(x)$ for 
almost all $x \in \mathcal{X}$ when $n \to \infty$. Now, by means of Fatou's lemma
\begin{eqnarray}
0 & = &\lim \limits_{n \to \infty} V^{*}(p_{G_{n}',f}*K_{\sigma_{1}},p_{G_{*},f}*K_{\sigma_{1}}) \nonumber \\
& \geq & \int \liminf \limits_{n \to \infty} \left|p_{G_{n}',f}*K_{\sigma_{1}}(x)-p_{G_{*},f}*K_{\sigma_{1}}(x)\right|\biggr(\dfrac{p_{G_{0},f_{0}}*K_{\sigma_{0}}(x)}{p_{G_{*},f}*K_{\sigma_{1}}(x)}\biggr)^{1/4} \textrm{d}x \nonumber \\
& = & V^{*}(p_{G',f}*K_{\sigma_{1}},p_{G_{*},f}*K_{\sigma_{1}}), \nonumber
\end{eqnarray}
which only happens when $p_{G',f}*K_{\sigma_{1}}(x)=p_{G_{*},f}*K_{\sigma_{1}}(x)$ 
for almost surely $x$. Due to the identifiability of $f*K_{\sigma_{1}}$, the former equality 
implies that $G' \equiv G_{*}$, which is a contradiction to the condition that $W_{1}
(G',G_{*})>\epsilon_{0}$. As a consequence, we achieve the conclusion of the lemma.
\paragraph{PROOF OF PROPOSITION \ref{proposition:consistency_components_well_specified}}
For the simplicity of presentation we implicitly denote $\widehat{G}_{n,m}=\widehat{G}_{n,m}(\sigma_{0,n})$ and $G_{0,m} = G_{0,m}(\sigma_{0,n}) = \mathop {\arg \min}\limits_{G \in \mathcal{O}_{m}}{h(p_{G,f_{0}} * K_{\sigma_{0,n}}, p_{G_{0},f_{0}}*K_{\sigma_{0,n}})}$ for any element $\sigma_{0,n}>0$, i.e., both $\widehat{G}_{n,m}$ and $G_{0,m}$ strictly depend on $\sigma_{0,n}$ and will vary as long as $\sigma_{0,n} \to 0$. Now, as $n \to \infty$, we will prove for almost surely that
\begin{eqnarray}
h(p_{\widehat{G}_{n,m},f_{0}}*K_{\sigma_{0,n}},P_{n}*K_{\sigma_{0,n}}) - h(p_{\widehat{G}_{n,m+1},f_{0}}*K_{\sigma_{0,n}},P_{n}*K_{\sigma_{0,n}}) \to d_{m}',\label{eqn:changed_bandwidth_zero}
\end{eqnarray}
where $d_{m}'=h(p_{\widetilde{G}_{0,m},f_{0}},p_{G_{0},f_{0}})-h(p_{\widetilde{G}_{0,m+1},f_{0}},p_{G_{0},f_{0}})$ where $\widetilde{G}_{0,m}=\mathop {\arg \min}\limits_{G \in \mathcal{O}_{m}}{h(p_{G,f_{0}}, p_{G_{0},f_{0}})}$. To achieve this result, we start with the following lemma
\begin{lemma}\label{lemma:convergence_bandwidth}
For any sequence $G_{n}$ and $\sigma_{n} \to 0$, we have as $n \to \infty$ that
\begin{eqnarray}
h(p_{G_{n},f_{0}}*K_{\sigma_{n}},p_{G_{n},f_{0}}) \to 0. \nonumber
\end{eqnarray}
\end{lemma}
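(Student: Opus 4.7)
The plan is a two-step reduction followed by a uniform translation-continuity argument. Write $p_n := p_{G_n, f_0}$ for brevity. First, using the elementary inequality $h^2(p,q) \leq V(p,q) = \frac{1}{2}\|p - q\|_1$ valid for any pair of probability densities, it suffices to prove that $\|p_n * K_{\sigma_n} - p_n\|_1 \to 0$.

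Next, by Fubini and the change of variable $y = \sigma_n z$,
\begin{eqnarray*}
\|p_n * K_{\sigma_n} - p_n\|_1 \leq \int K_{\sigma_n}(y)\, \|\tau_y p_n - p_n\|_1\, dy = \int K(z)\, \|\tau_{\sigma_n z} p_n - p_n\|_1\, dz,
\end{eqnarray*}
where $\tau_h g(\cdot) := g(\cdot - h)$. The mixture structure $p_n = \int f_0(\cdot|\theta)\, dG_n(\theta)$ yields the key bound, uniform in $G_n$,
\begin{eqnarray*}
\|\tau_h p_n - p_n\|_1 \leq \int \|\tau_h f_0(\cdot|\theta) - f_0(\cdot|\theta)\|_1\, dG_n(\theta) \leq \omega(h),
\end{eqnarray*}
where $\omega(h) := \sup_{\theta \in \Theta} \|\tau_h f_0(\cdot|\theta) - f_0(\cdot|\theta)\|_1$ is a deterministic envelope independent of the sequence $G_n$. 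Once $\omega(h) \to 0$ as $h \to 0$ is established, the desired conclusion follows by dominated convergence applied to $\int K(z)\, \omega(\sigma_n z)\, dz$, since $\omega(\sigma_n z) \leq 2$ for every $z$ and $K$ is a probability density.

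The main obstacle is precisely to verify that $\omega(h) \to 0$. For each fixed $\theta$, continuity of translation in $L^1$ gives $\|\tau_h f_0(\cdot|\theta) - f_0(\cdot|\theta)\|_1 \to 0$; the issue is to upgrade this to uniformity over $\Theta$. My plan is to argue that the parametric family $\{f_0(\cdot|\theta) : \theta \in \Theta\}$ is a compact subset of $L^1(\mathbb{R}^d)$: $L^1$-continuity of $\theta \mapsto f_0(\cdot|\theta)$ is standard under the regularity on $f_0$ assumed in Section~\ref{Section:prelim} (pointwise continuity with an integrable dominating envelope, equivalently the first-order uniform Lipschitz property combined with Scheff\'e's lemma), and $\Theta$ is compact, so the continuous image is compact in $L^1$. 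By the Kolmogorov--Riesz theorem, relatively compact subsets of $L^1$ have uniformly small translation oscillation, which is exactly the statement $\omega(h) \to 0$. A more elementary alternative is to cover $\Theta$ by finitely many balls of radius $\delta$, invoke pointwise translation-continuity at a representative in each ball, and conclude via the triangle inequality together with $L^1$-continuity of $f_0$ in $\theta$; choosing $\delta$ small enough and then $h$ small enough delivers the same bound.
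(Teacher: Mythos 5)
Your proof is correct, but it is organized differently from the paper's. The paper follows the classical Devroye-style argument: after the same reduction $h^{2}\leq V$, it first treats the case where $f_{0}(\cdot|\theta)$ is continuous and supported in a fixed compact set, splitting $K=K\mathbbm{1}_{\{\|x\|\leq M\}}+K\mathbbm{1}_{\{\|x\|>M\}}$ and controlling the near part by the sup-norm modulus of continuity $\omega(M\sigma_{n})$ on a compact set and the far part by the tail mass of $K$; it then passes to general $f_{0}$ by approximating $f_{0}(\cdot|\theta)$ uniformly over $\theta$ by such nice functions. You instead push the whole argument into $L^{1}$: Minkowski's integral inequality reduces everything to the translation modulus $\omega(h)=\sup_{\theta}\|\tau_{h}f_{0}(\cdot|\theta)-f_{0}(\cdot|\theta)\|_{1}$, uniformity over $\theta$ comes from compactness of $\{f_{0}(\cdot|\theta):\theta\in\Theta\}$ in $L^{1}$ (continuity of $\theta\mapsto f_{0}(\cdot|\theta)$ via Scheff\'e plus compactness of $\Theta$, then the necessity half of Fr\'echet--Kolmogorov or the finite-cover triangle inequality), and dominated convergence against $K(z)\,dz$ replaces the truncation of $K$. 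The two routes are of comparable strength, and your version has the merit of making explicit the uniform-in-$\theta$ ingredient that the paper only asserts (namely, that a single continuous compactly supported approximant with a $\theta$-independent support and error can be chosen --- which is really the same compactness fact you prove). The only hypothesis you should flag is pointwise continuity of $\theta\mapsto f_{0}(x|\theta)$, but this is available here since $f_{0}$ is assumed differentiable in $\theta$ throughout Section 3, so your argument goes through under the paper's standing assumptions.
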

The proof of this lemma is deferred to Appendix B. Now, applying the result of Lemma \ref{lemma:convergence_bandwidth} to the sequences $G_{0,m}$ and $\sigma_{0,n}$, we have
\begin{eqnarray}
\mathop {\lim }\limits_{n \to \infty}{h(p_{G_{0,m},f_{0}}*K_{\sigma_{0,n}},p_{G_{0},f_{0}}*K_{\sigma_{0,n}})} & = &\mathop {\lim }\limits_{n \to \infty}{h(p_{G_{0,m},f_{0}},p_{G_{0},f_{0}})} \nonumber \\
& \geq & h(p_{\widetilde{G}_{0,m},f_{0}},p_{G_{0},f_{0}}). \label{eqn:changed_bandwidth_one}
\end{eqnarray}
On the other hand, from the definition of $G_{0,m}$, we have
\begin{eqnarray}
h(p_{G_{0,m},f_{0}}*K_{\sigma_{0,n}},p_{G_{0},f_{0}}*K_{\sigma_{0,n}}) \leq h(p_{\widetilde{G}_{0,m},f_{0}}*K_{\sigma_{0,n}},p_{G_{0},f_{0}}*K_{\sigma_{0,n}}). \nonumber
\end{eqnarray}
Therefore,
\begin{eqnarray}
\mathop {\lim }\limits_{n \to \infty}{h(p_{G_{0,m},f_{0}}*K_{\sigma_{0,n}},p_{G_{0},f_{0}}*K_{\sigma_{0,n}})} & \leq & \mathop {\lim }\limits_{n \to \infty}{h(p_{\widetilde{G}_{0,m},f_{0}}*K_{\sigma_{0,n}},p_{G_{0},f_{0}}*K_{\sigma_{0,n}})}\nonumber \\
& = & h(p_{\widetilde{G}_{0,m},f_{0}},p_{G_{0},f_{0}}). \label{eqn:changed_bandwith_second}
\end{eqnarray}
Combining the results from \eqref{eqn:changed_bandwidth_one} and \eqref{eqn:changed_bandwith_second}, we have
\begin{eqnarray}
\mathop {\lim }\limits_{n \to \infty}{h(p_{G_{0,m},f_{0}}*K_{\sigma_{0,n}},p_{G_{0},f_{0}}*K_{\sigma_{0,n}})}=h(p_{\widetilde{G}_{0,m},f_{0}},p_{G_{0},f_{0}}). \label{eqn:changed_bandwith_third}
\end{eqnarray}
Now, we will demonstrate that
\begin{eqnarray}
\mathop {\lim }\limits_{n \to \infty}{h(p_{\widehat{G}_{n,m},f_{0}}*K_{\sigma_{0,n}},P_{n}*K_{\sigma_{0,n}})}=\mathop {\lim }\limits_{n \to \infty}{h(p_{G_{0,m},f_{0}}*K_{\sigma_{0,n}},p_{G_{0},f_{0}}*K_{\sigma_{0,n}})}. \label{eqn:changed_bandwith_fourth}
\end{eqnarray}
In fact, from the definition of $\widehat{G}_{n,m}$ we quickly obtain that
\begin{eqnarray}
\mathop {\lim }\limits_{n \to \infty}{h(p_{\widehat{G}_{n,m},f_{0}}*K_{\sigma_{0,n}},P_{n}*K_{\sigma_{0,n}})} & \leq & \mathop {\lim }\limits_{n \to \infty}{h(p_{G_{0,m},f_{0}}*K_{\sigma_{0,n}},P_{n}*K_{\sigma_{0,n}})} \nonumber \\
& = & \mathop {\lim }\limits_{n \to \infty}{h(p_{G_{0,m},f_{0}}*K_{\sigma_{0,n}},p_{G_{0},f_{0}}*K_{\sigma_{0,n}})} \label{eqn:changed_bandwith_fifth}
\end{eqnarray}
where the last equality is due to the fact that $h(P_{n}*K_{\sigma_{0,n}},p_{G_{0},f_{0}}) \to 0$ almost surely as $n \to \infty$, $\sigma_{0,n} \to 0$ and $n\sigma_{0,n}^{d} \to \infty$. On the other hand, from the formulation of $G_{0,m}$ we have
\begin{eqnarray}
\mathop {\lim }\limits_{n \to \infty}{h(p_{G_{0,m},f_{0}}*K_{\sigma_{0,n}},p_{G_{0},f_{0}}*K_{\sigma_{0,n}})} & \leq & \mathop {\lim }\limits_{n \to \infty}{h(p_{\widehat{G}_{n,m},f_{0}}*K_{\sigma_{0,n}},p_{G_{0},f_{0}}*K_{\sigma_{0,n}})} \nonumber \\
& = & \mathop {\lim }\limits_{n \to \infty}{h(p_{\widehat{G}_{n,m},f_{0}}*K_{\sigma_{0,n}},P_{n}*K_{\sigma_{0,n}})} \label{eqn:changed_bandwith_sixth}
\end{eqnarray}
Combining \eqref{eqn:changed_bandwith_fifth} and \eqref{eqn:changed_bandwith_sixth}, we obtain equality \eqref{eqn:changed_bandwith_fourth}. Now, the combination of \eqref{eqn:changed_bandwith_third} and \eqref{eqn:changed_bandwith_fourth} leads to
\begin{eqnarray}
\mathop {\lim }\limits_{n \to \infty}{h(p_{\widehat{G}_{n,m},f_{0}}*K_{\sigma_{0,n}},P_{n}*K_{\sigma_{0,n}})}=h(p_{\widetilde{G}_{0,m},f_{0}},p_{G_{0},f_{0}}). \nonumber
\end{eqnarray}
Therefore, we obtain the conclusion of \eqref{eqn:changed_bandwidth_zero}. From here, by using the same argument as Step 1 of Theorem \ref{theorem:convergence_rate_mixing_measure}, we ultimately get $d_{m}'=0$ as $m \geq k_{0}$ and $d_{m}'>0$ as $m<k_{0}$. As a consequence, $\widehat{m}_{n} \to k_{0}$ almost surely as $n \to \infty$. The conclusion of the proposition follows.
\paragraph{PROOF OF LEMMA \ref{lemma:key_inequality_misspecified_setting}}
The proof proceeds by using the idea from Leroux's argument \citep{Leroux-1992}. In fact, from the definition of $G_{*}$, we have $h(p_{G_{*},f}*K_{\sigma_{1}},p_{G_{0},f_{0}}*K_{\sigma_{0}}) \leq h(p_{G,f}*K_{\sigma_{1}},p_{G_{0},f_{0}}*K_{\sigma_{0}})$ for any $G \in \overline{\mathcal{G}}$. Now, for any $\theta \in \Theta$, by choosing $G=(1-\epsilon)G_{*}+\epsilon\delta_{\theta}$ and letting $\epsilon \to 0$ as in Step 1 of the proof of Theorem \ref{theorem:convergence_rate_mixing_measure}, we eventually obtain
\eqsplit{
&\int {(p_{G_{0},f_{0}}*K_{\sigma_{0}}(x))^{1/2}(p_{G_{*},f}*K_{\sigma_{1}}(x))^{1/2}}\textrm{d}x
\\
&\hspace{3em}\geq\int {(p_{G_{0},f_{0}}*K_{\sigma_{0}}(x))^{1/2}f*K_{\sigma_{1}}(x|\theta)(p_{G_{*},f}*K_{\sigma_{1}}(x))^{-1/2}}\textrm{d}x. }
By choosing $\theta \in \text{supp}(G_{*})$ and summing over all of these components, we readily obtain inequality \eqref{eqn:property_modified_hellinger_distance}, which concludes the result of the lemma.
\paragraph{PROOF OF LEMMA \ref{lemma:key_property_non_unique}}
By means of Holder inequality, we obtain
\begin{eqnarray}
\biggr(\int {\sqrt{p_{G_{1,*},f}*K_{\sigma_{1}}(x)}
\sqrt{p_{G_{0},f_{0}}*K_{\sigma_{0}}(x)}}\textrm{d}x\biggr)^{2} \leq \int {p_{G_{1,*},f}*K_{\sigma_{1}}(x)\sqrt{\dfrac{p_{G_{0},f_{0}}*K_{\sigma_{0}}(x)}{p_{G_{2,*},f}*K_{\sigma_{1}}(x)}}}\textrm{d}x \nonumber \\
\hspace{-2em} \times \int {\sqrt{p_{G_{2,*},f}*K_{\sigma_{1}}(x)}
\sqrt{p_{G_{0},f_{0}}*K_{\sigma_{0}}(x)}}\textrm{d}x. \nonumber
\end{eqnarray}
From the definition of $G_{1,*}$ and $G_{2,*}$, we have
\begin{eqnarray}
\int {\sqrt{p_{G_{1,*},f}*K_{\sigma_{1}}(x)}
\sqrt{p_{G_{0},f_{0}}*K_{\sigma_{0}}(x)}}\textrm{d}x=\int {\sqrt{p_{G_{2,*},f}*K_{\sigma_{1}}(x)}
\sqrt{p_{G_{0},f_{0}}*K_{\sigma_{0}}(x)}}\textrm{d}x. \nonumber
\end{eqnarray}
Therefore, the above inequality along with innequality \eqref{eqn:property_modified_hellinger_distance} in Lemma \ref{lemma:key_inequality_misspecified_setting} lead to
\begin{eqnarray}
\int {p_{G_{1,*},f}*K_{\sigma_{1}}(x)\sqrt{\dfrac{p_{G_{0},f_{0}}*K_{\sigma_{0}}(x)}{p_{G_{2,*},f}*K_{\sigma_{1}}(x)}}}\textrm{d}x = \int {\sqrt{p_{G_{2,*},f}*K_{\sigma_{1}}(x)}
\sqrt{p_{G_{0},f_{0}}*K_{\sigma_{0}}(x)}}\textrm{d}x. \nonumber
\end{eqnarray}
It eventually implies that
\begin{eqnarray}
\int {\biggr(\sqrt{p_{G_{1,*},f}*K_{\sigma_{1}}(x)}-\sqrt{p_{G_{2,*},f}*K_{\sigma_{1}}(x)}\biggr)^{2}\sqrt{\dfrac{p_{G_{0},f_{0}}*K_{\sigma_{0}}(x)}{p_{G_{2,*},f}*K_{\sigma_{1}}(x)}}}\textrm{d}x=0. \nonumber
\end{eqnarray}
Therefore, $p_{G_{1,*},f}*K_{\sigma_{1}}(x) = p_{G_{2,*},f}*K_{\sigma_{1}}(x)$ almost surely $x \in \mathcal{X}$. As a consequence, we obtain the conclusion of the lemma.
\paragraph{PROOF OF THEOREM \ref{theorem:sufficent_necessary_condition}}
Here, we only provide the proof for part (b) as it is the generalization of part (a). The proof is similar to that in Step 1 of Theorem \ref{theorem:convergence_rate_mixing_measure_misspecified_strong}. In fact, as $n \to \infty$ we have for almost surely that
\begin{eqnarray}
h(p_{\widehat{G}_{n,m},f}*K_{\sigma_{1}},P_{n}*K_{\sigma_{0}}) \to h(p_{G_{*,m},f}*K_{\sigma_{1}},p_{G_{0},f_{0}}*K_{\sigma_{0}})
\end{eqnarray}
where $G_{*,m}=\mathop {\arg \min}\limits_{G \in \mathcal{O}_{m}}{h(p_{G,f}*K_{\sigma_{1}},p_{G_{0},f_{0}}*K_{\sigma_{0}})}$. From the argument of Step 1 in the proof of Theorem \ref{theorem:convergence_rate_mixing_measure_misspecified_strong}, we have
\begin{eqnarray}
h(p_{G_{*,m+1},f}*K_{\sigma_{1}},p_{G_{0},f_{0}}*K_{\sigma_{0}})< h(p_{G_{*,m},f}*K_{\sigma_{1}},p_{G_{0},f_{0}}*K_{\sigma_{0}})
\end{eqnarray}
for any $1 \leq m \leq k_{*}-1$. It implies that $G_{*,m} \in \mathcal{E}_{m}$ for all $1 \leq m \leq k_{*}$. Now,  if we would like to have $\widetilde{m}_{n} \to k_{*}$ as $n \to \infty$, the sufficient and necessary condition is
\begin{eqnarray}
h(p_{G_{*},f}*K_{\sigma_{1}},p_{G_{0},f_{0}}*K_{\sigma_{0}}) \leq \epsilon< h(p_{G_{*,k_{*}-1},f}*K_{\sigma_{1}},p_{G_{0},f_{0}}*K_{\sigma_{0}}), \nonumber
\end{eqnarray}
which is precisely the conclusion of the theorem.
\paragraph{PROOF OF PROPOSITION \ref{proposition:sufficient_condition_wellspecified_setting}}
Using the argument from Step 1 in the proof of Theorem 
\ref{theorem:convergence_rate_mixing_measure}, we obtain that $G_{0,k_{0}-1}$ has 
exactly $k_{0}-1$ elements. Now, since $f_{0}*K_{\sigma_{0}}$ is uniformly Lipschitz up 
to the first order and identifiable, we obtain
\begin{eqnarray}
\mathop {\inf }\limits_{G \in \mathcal{E}_{k_{0}-1}}{h(p_{G,f_{0}}*K_{\sigma_{0}},p_{G_{0},f_{0}}*K_{\sigma_{0}})/W_{1}(G,G_{0})}=C' \nonumber
\end{eqnarray}
where $C'$ is some positive constant depending only on $f_{0}, G_{0}, \Theta$, and $\sigma_{0}$. Therefore, we get
\begin{eqnarray}
h(p_{G_{0,k_{0}-1},f_{0}}*K_{\sigma_{0}},p_{G_{0},f_{0}}*K_{\sigma_{0}}) \geq C' W_{1}(G_{0},G_{0,k_{0}-1}) \geq C' \mathop {\inf }\limits_{G \in \mathcal{E}_{k_{0}-1}}{W_{1}(G,G_{0})}. \label{eqn:algorithm_2_simple_wellspecified_first}
\end{eqnarray}
Now, for any $G=\sum \limits_{i=1}^{k_{0}-1}{p_{i}\delta_{\theta_{i}}} \in \mathcal{E}_{k_0-1}$, we can find the index $j^* \in [1,k_{0}]$ such that
\begin{eqnarray}
\|\theta_{i}-\theta_{j^*}^{0}\| \geq \min \limits_{1 \leq j \neq j^* \leq k_{0}}{||\theta_{i}-\theta_{j}^{0}||} \nonumber
\end{eqnarray}
for any $1 \leq i \leq k_{0}-1$. Therefore, we obtain
\begin{eqnarray}
2||\theta_{i}-\theta_{j^*}^{0}||  \geq  ||\theta_{i}-\theta_{j^{*}}^{0}||+\min \limits_{1 \leq j \neq j^* \leq k_{0}}{||\theta_{i}-\theta_{j}^{0}||} \geq \mathop {\min }\limits_{1 \leq u \neq v \leq k_{0}}{||\theta_{u}^{0}-\theta_{v}^{0}||} \nonumber
\end{eqnarray}
for any $1 \leq i \leq k_{0}-1$. From the definition of $W_{1}(G,G_{0})$, we can find the 
optimal coupling $\vec{q} \in \mathcal{Q}(\vec{p},\vec{p^0})$ such that $W_{1}
(G,G_{0})  = \mathop {\sum }\limits_{i,j} {q_{ij}\|\theta_{i}-\theta_{j}^0\|}$. Hence, we get
\begin{eqnarray}
W_{1}(G,G_{0}) & \geq & \sum \limits_{i=1}^{k_{0}}{q_{ij^*}\|\theta_{i}-\theta_{j^*}^0\|} \geq  p_{j^*}^0 \min \limits_{1 \leq i \leq k_{0}-1}{\|\theta_{i}-\theta_{j^*}^0\|} \nonumber \\
& \geq & \biggr(\mathop {\min }\limits_{1 \leq i \leq k_{0}}{p_{i}^{0}} \times \mathop {\min }\limits_{1 \leq i \neq j \leq k_{0}}{||\theta_{i}^{0}-\theta_{j}^{0}||}\biggr)/2 \nonumber
\end{eqnarray}
for all $G \in \mathcal{E}_{k_{0}-1}$. It implies that
\begin{eqnarray}
\mathop {\inf }\limits_{G \in \mathcal{E}_{k_{0}-1}}{W_{1}(G,G_{0})} \geq \biggr(\mathop {\min }\limits_{1 \leq i \leq k_{0}}{p_{i}^{0}} \times \mathop {\min }\limits_{1 \leq i \neq j \leq k_{0}}{||\theta_{i}^{0}-\theta_{j}^{0}||}\biggr)/2. \label{eqn:algorithm_2_simple_wellspecified_second}
\end{eqnarray}
By combining \eqref{eqn:algorithm_2_simple_wellspecified_first} and \eqref{eqn:algorithm_2_simple_wellspecified_second}, if we choose $ \mathop {\min }\limits_{1 \leq i \leq k_{0}}{p_{i}^{0}}\mathop {\min }\limits_{1 \leq i \neq j \leq k_{0}}{||\theta_{i}^{0}-\theta_{j}^{0}||} \geq 2\epsilon/C'$, then $\epsilon< h(p_{G_{0,k_{0}-1},f_{0}}*K_{\sigma_{0}},p_{G_{0},f_{0}}*K_{\sigma_{0}})$. As a consequence, by defining $C=1/C'$ we obtain the conclusion of the lemma.
\comment{
\paragraph{PROOF OF PROPOSITION \ref{proposition:sufficient_condition_misspecified_setting}}
The proof proceeds by treating two sides of \eqref{eqn:another_misspeficied_kernel} separately. \paragraph{Inequality $h(p_{G_{*},f}*K_{\sigma_{1}},p_{G_{0},f_{0}}*K_{\sigma_{0}}) \leq \epsilon$:} From the definition of $G_{*}$, we obtain
\begin{eqnarray}
h^{2}(p_{G_{*},f}*K_{\sigma_{1}},p_{G_{0},f_{0}}*K_{\sigma_{0}}) & \leq &  h^{2}(p_{G_{0},f}*K_{\sigma_{1}},p_{G_{0},f_{0}}*K_{\sigma_{0}}) \nonumber \\
& \leq & V(p_{G_{0},f}*K_{\sigma_{1}},p_{G_{0},f_{0}}*K_{\sigma_{0}}) \nonumber \\
& \leq &  \|f-f_{0}\|/2 + V(p_{G_{0},f_{0}}*K_{\sigma_{1}},p_{G_{0},f_{0}}*K_{\sigma_{0}}) \nonumber \\
& \leq & \|f-f_{0}\|/2 + V(K_{\sigma_{1}},K_{\sigma_{0}}) \nonumber \\
& \leq & C_{1}'\biggr(\|f-f_{0}\| + |\sigma_{1}-\sigma_{0}|)\biggr) \nonumber
\end{eqnarray}
where $C_{1}'$ depends only on $K$ and the range of $\sigma_{1}$ and $\sigma_{0}$. It implies that as long as we choose $f, f_{0}$, $\sigma_{1}$, and $\sigma_{0}$ such that $||f-f_{0}||+|\sigma_{1}-\sigma_{0}| \leq C_{1}\epsilon^{2}$ where $C_{1}=1/C_{1}'$, we achieve $h(p_{G_{*},f}*K_{\sigma_{1}},p_{G_{0},f_{0}}*K_{\sigma_{0}}) \leq \epsilon$.
\paragraph{Inequality $\epsilon< h(p_{G_{*,k_{*}-1},f}*K_{\sigma_{1}},p_{G_{0},f_{0}}*K_{\sigma_{0}})$:} We start with the following lemma
\begin{lemma} \label{lemma:support_misspecified_setting} Under the hypothesis of Proposition \ref{proposition:sufficient_condition_misspecified_setting}, we obtain
\begin{eqnarray}
h(p_{G,f}*K_{\sigma_{1}},p_{G_{0},f_{0}}*K_{\sigma_{0}}) \geq C_{2}'W_{1}(G,G_{0}) \nonumber
\end{eqnarray}
for any $G \in \mathcal{E}_{k_{*}-1}$ where $C_{2}'$ is some positive constant depending only on $f, f_{0},G_{0},K, \Theta$, $\sigma_{1}$, and $\sigma_{0}$.
\end{lemma}
The proof of Lemma \ref{lemma:support_misspecified_setting} is deferred to Appendix B. Now, from the above lemma, we have
\begin{eqnarray}
h(p_{G_{*,k_{*}-1},f}*K_{\sigma_{1}},p_{G_{0},f_{0}}*K_{\sigma_{0}}) \geq C_{2}'W_{1}(G_{0},G_{*,k_{*}-1}) & \geq & C_{2}'\mathop {\inf }\limits_{G \in \mathcal{E}_{k_{*}-1}}{W_{1}(G,G_{0})} \nonumber \\
& \geq & C_{2}'\mathop {\inf }\limits_{G \in \mathcal{E}_{k_{0}-1}}{W_{1}(G,G_{0})}\nonumber
\end{eqnarray}
where the last inequality is due to the assumption that $k_{*} \leq k_{0}$. By ultilizing the same argument as that of the well-specified setting in the proof of Proposition \ref{proposition:sufficient_condition_wellspecified_setting}, if we choose
\begin{eqnarray}
\mathop {\min }\limits_{1 \leq i \leq k_{0}}{p_{i}^{0}}\mathop {\min }\limits_{1 \leq i \neq j \leq k_{0}}{||\theta_{i}^{0}-\theta_{j}^{0}||} \geq 2\epsilon/C_{2}', \nonumber
\end{eqnarray}
then $\epsilon< h(p_{G_{*,k_{*}-1},f}*K_{\sigma_{1}},p_{G_{0},f_{0}}*K_{\sigma_{0}})$. Combining all of the above argument, we achieve the conclusion of the proposition.}
\paragraph{PROOF OF PROPOSITION \ref{proposition:lower_bound_varying_G0}}
The proof of this proposition is a straightforward combination of Fatou's lemma and the 
argument from Theorem 4.6 in \citep{Jonas-2016}. In fact, for any $\epsilon>0$, as 
$W_{1}(G_{0}^{n},\widetilde{G}_{0}) \to 0$ we can find $M(\epsilon) \in \mathbb{N}$ 
such that $W_{1}(G_{0}^{n},\widetilde{G}_{0})<\epsilon$ for any $n \geq M(\epsilon)$. 
Additionally, as $\mathop {\lim\sup}\limits_{n \to \infty}{k_{0}^{n}}=k$, we can find 
$T(\epsilon) \in \mathbb{N}$ such that $k_{0}^{n} \leq k$ for all $n \geq T(\epsilon)$. 
Denote $N(\epsilon)=\mathop {\max }{\left\{M(\epsilon),T(\epsilon)\right\}}$ for any $
\epsilon>0$. Now, assume that for any $\epsilon>0$, we have
\begin{eqnarray}
\mathop {\inf }\limits_{G \in \mathcal{O}_{k_{0}^{n}}: W_{1}(G,\widetilde{G}_{0})<\epsilon} {h(p_{G,f_{0}}*K_{\sigma_{0}},p_{G_{0}^{n},f_{0}}*K_{\sigma_{0}})/W_{1}^{2k-2\widetilde{k}_{0}+1}(G,G_{0}^{n})}=0. \nonumber
\end{eqnarray}
as long as $n \geq N(\epsilon)$. For each $n \geq N(\epsilon)$, it implies that we have a 
sequence $G_{m}^{n} \in \mathcal{O}_{k_{0}^{n}} \subset \mathcal{O}_{k}$ such that 
$W_{1}(G_{m}^{n},\widetilde{G}_{0})<\epsilon$ for all $m \geq 1$ and
\begin{eqnarray}
h(p_{G_{m}^{n},f_{0}}*K_{\sigma_{0}},p_{G_{0}^{n},f_{0}}*K_{\sigma_{0}})/W_{1}^{2k-2\widetilde{k}_{0}+1}(G_{m}^{n},G_{0}^{n}) \to 0 \nonumber
\end{eqnarray}
as $m \to \infty$. By means of Fatou's lemma, we eventually have
\begin{eqnarray}
\mathop {\lim \inf }\limits_{m \to \infty}{\left(p_{G_{m}^{n},f_{0}}*K_{\sigma_{0}}(x)-p_{G_{0}^{n},f_{0}}*K_{\sigma_{0}}(x)\right)/W_{1}^{2k-2\widetilde{k}_{0}+1}(G_{m}^{n},G_{0}^{n})} \to 0 \nonumber
\end{eqnarray}
almost surely $x \in \mathcal{X}$. However, from the argument of Theorem 4.6 in 
\cite{Jonas-2016}, we can find $\epsilon_{0}>0$ such that for all $G_{m}^{n},G_{0}^{n} 
\in \mathcal{O}_{k}$ where $W_{1}(G_{m}^{n},\widetilde{G}_{0}) \vee W_{1}(G_{0}
^{n},\widetilde{G}_{0})<\epsilon_{0}$, not almost surely $x \in \mathcal{X}$ that $
\left(p_{G_{m}^{n},f_{0}}*K_{\sigma_{0}}(x)-p_{G_{0}^{n},f_{0}}*K_{\sigma_{0}}(x)
\right)/W_{1}^{2k-2\widetilde{k}_{0}+1}(G_{m}^{n},G_{0}^{n}) \to 0$ for each $n \geq 
N(\epsilon_{0})$, which is a contradiction. Therefore, we achieve the conclusion of the 
proposition.
\newpage
\section*{Appendix B}
\label{appendix_B}
This appendix contains remaining proofs of the main results in the paper.
\paragraph{PROOF OF PROPOSITION \ref{proposition:first_order_identifiability}}
A careful investigation of the proof of Theorem 3.1 in \cite{Ho-Nguyen-EJS-16} implies that
\begin{eqnarray}
h(p_{G,f},p_{G_{0},f}) \gtrsim W_{1}(G,G_{0}), \label{eqn:first_order_identifiability}
\end{eqnarray}
for any $G \in \mathcal{O}_{k_{0}}$ such that $W_{1}(G,G_{0})$ is sufficiently small. 
The latter restriction means that the result in \eqref{eqn:first_order_identifiability} is of a 
local nature.  We also would like to extend this lower bound of $h(p_{G,f},p_{G_{0},f})$ 
for any $G \in \mathcal{O}_{k_{0}}$. It appears that the first order Lipschitz continuity of 
$f$ is sufficient to extend \eqref{eqn:first_order_identifiability}
for any $G \in \mathcal{O}_{k_{0}}$. In fact, by the result in \eqref{eqn:first_order_identifiability}, we can find a positive constant $\epsilon_{0}$ such that
\begin{eqnarray}
\mathop {\inf }\limits_{G \in \mathcal{O}_{k_{0}}: W_{1}(G,G_{0}) \leq \epsilon_{0}}{h(p_{G,f},p_{G_{0},f})/W_{1}(G,G_{0})}>0 \nonumber
\end{eqnarray}
Therefore, to extend \eqref{eqn:first_order_identifiability} for any $G \in \mathcal{O}_{k_{0}}$, it is sufficient to demonstrate that
\begin{eqnarray}
\mathop {\inf }\limits_{G \in \mathcal{O}_{k_{0}}: W_{1}(G,G_{0}) > \epsilon_{0}}{h(p_{G,f},p_{G_{0},f})/W_{1}(G,G_{0})}>0 \nonumber
\end{eqnarray}
Assume by the contrary that the above result does not hold. It implies that we can find a 
sequence $G_{n} \in \mathcal{O}_{k_{0}}$ such that $W_{1}(G_{n},G_{0})>\epsilon_{0}
$ and $h(p_{G_{n},f},p_{G_{0},f})/W_{1}(G_{n},G_{0}) \to 0$ as $n \to \infty$. Since $
\Theta$ is a compact set, we can find $G' \in \mathcal{O}_{k_{0}}$ such that a 
subsequence of $G_{n}$ satisfies $W_{1}(G_{n},G') \to 0$ and $W_{1}(G',G_{0})>
\epsilon_{0}$. Without loss of generality, we replace that subsequence by its whole 
sequence. Therefore, $h(p_{G_{n},f},p_{G_{0},f}) \to 0$ as $n \to \infty$. Due to the 
first order Lipschitz continuity of $f$, we obtain $p_{G_{n},f}(x) \to p_{G',f}(x)$ for any 
$x \in \mathcal{X}$ when $n \to \infty$. Now, by means of Fatou's lemma, we have
\begin{eqnarray}
0=\lim \limits_{n \to \infty}{h(p_{G_{n},f},p_{G_{0},f})} \geq \int \mathop {\lim \inf }\limits_{n \to \infty}{\biggr(\sqrt{p_{G_{n},f}(x)}-\sqrt{p_{G_{0},f}(x)}\biggr)^{2}}\textrm{d}x = h(p_{G',f},p_{G_{0},f}). \nonumber
\end{eqnarray}
Since $f$ is identifiable, the above result implies that $G' \equiv G_{0}$, which contradicts 
the assumption that $W_{1}(G',G_{0}) >\epsilon_{0}$. As a consequence, we can extend 
inequality \eqref{eqn:first_order_identifiability} for any $G \in \mathcal{O}_{k_{0}}$ when 
$f$ is uniformly Lipschitz up to the first order.
\comment{
\paragraph{PROOF OF EXAMPLE \ref{example:characterization_identifiability}}
Assume by the contrary that $f_{1}$ and $f_{2}$ are not distinguishable. We denote $
\theta=(\eta,\tau)$ where $\eta$ represents the location parameter and $\tau$ 
represents the variance parameter. Now, the assumption implies that we can find $G_{1}=
\sum \limits_{i=1}^{t_{1}}{\alpha_{i}\delta_{(\eta_{i},\tau_{i})}}$ and $G_{2}=\sum 
\limits_{i=1}^{t_{2}}{\beta_{i}\delta_{(\eta_{i}',\tau_{i}')}}$ such that 
$h(p_{G_{1},f_{1}},p_{G_{2},f_{2}})=0$. Therefore, we have
\begin{eqnarray}
\sum \limits_{i=1}^{t_{1}}{\alpha_{i}f_{1}(x|\eta_{i},\tau_{i})}=\sum \limits_{i=1}^{t_{2}}{\beta_{i}f_{2}(x|\eta_{i}',\tau_{i}')} \ \text{for almost surely} \ x \in \mathbb{R} \nonumber
\end{eqnarray}
The above equation can be rewritten as
\begin{eqnarray}
\sum \limits_{i=1}^{t_{1}}{\alpha_{i}'\exp\biggr(-(a_{i}x_{1}^{2}+b_{i}x_{1}+c_{i})\biggr)}=\sum \limits_{i=1}^{t_{2}}{\beta_{i}'\biggr(\nu + a_{i}'x_{1}^{2}+b_{i}'x_{1}+c_{i}'\biggr)^{-(\nu + 1)/2}}, \label{eqn:example_identifiability_second}
\end{eqnarray}
where $\alpha_{i}'=\dfrac{\alpha_{i}}{\sqrt{2\pi}\tau_{i}}, a_{i}=\dfrac{1}{2\tau_{i}
^{2}}, b_{i}=\dfrac{\eta_{i}}{\tau_{i}^{2}}, c_{i}=\dfrac{\eta_{i}^{2}}{2\tau_{i}^{2}}$ 
as $1 \leq i \leq t_{1}$ and $\beta_{j}'=\dfrac{C_{\nu}\beta_{j}}{\tau_{j}'}, a_{i}'=
\dfrac{1}{\nu(\tau_{i}')^{2}}, b_{i}'=-\dfrac{2\eta_{i}'}{\nu(\tau_{i}')^{2}}, c_{i}'=
\dfrac{(\eta_{i}')^{2}}{\nu(\tau_{i}')^{2}}$ for all $1 \leq j \leq t_{2}$ with $C_{\nu}=
\dfrac{\Gamma\left(\frac{\nu+1}{2}\right)}{\sqrt{\nu\pi}\Gamma\left(\frac{\nu}{2}
\right)}$.

Choose $a_{i_{1}}=\mathop {\min}\limits_{1 \leq i \leq t_{1}}{\left\{a_{i}\right\}}$. Denote $J=\left\{1 \leq i \leq t_{1}:a_{i}=a_{i_{1}}\right\}$. Choose $1 \leq i_{2} \leq t_{1}$ such that $b_{i_{2}}=\mathop {\max }\limits_{i \in J}{\left\{b_{i}\right\}}$. Now, multiply both sides of \eqref{eqn:example_identifiability_second} with $\exp(a_{i_{2}}x_{1}^{2}+b_{i_{2}}x_{1}+c_{i_{2}})$, we obtain
\begin{eqnarray}
\alpha_{i_{2}}'+\sum \limits_{i \neq i_{2}}{\alpha_{i}'\exp\biggr(a_{i_{2}}x_{1}^{2}+b_{i_{2}}x_{1}+c_{i_{2}}-(a_{i}x_{1}^{2}+b_{i}x_{1}+c_{i})\biggr)\biggr)}  =  \nonumber \\
\sum \limits_{i=1}^{t_{2}}{\beta_{i}'\exp(a_{i_{2}}x_{1}^{2}+b_{i_{2}}x_{1}+c_{i_{2}})}\biggr(\nu + a_{i}'x_{1}^{2}+b_{i}'x_{1}+c_{i}'\biggr)^{-(\nu + 1)/2}. \nonumber
\end{eqnarray}
As $x \to \infty$, the left hand side of the above equation goes to $\alpha_{i_{2}}'$ while 
the right hand side of the above equation either goes to 0 if $\beta_{i}'=0$ for all $1 \leq 
i \leq t_{2}$ or goes to $\pm \infty$ if at least one of $\beta_{i}'$ differs from 0. As a 
consequence, we obtain $\alpha_{i_{2}}'=0$ and $\beta_{i}'=0$ for all $1 \leq i \leq 
t_{2}$. It leads to $\beta_{i}=0$ for all $1 \leq i \leq t_{2}$, which is a contradiction. As 
a consequence, we achieve the conclusion of the example.}
\paragraph{PROOF OF LEMMA  \ref{lemma:convergence_bandwidth}}
The proof idea of this lemma is similar to that of Theorem 1 in Chapter 2 of 
\citep{Devroye-1985}. However, it is slightly more complex than the proof of this theorem 
as we allow $G_{n}$ to vary when $\sigma_{n}$ vary. Here, we provide the proof of this 
lemma for the completeness. Since the Hellinger distance is upper bound by the total 
variation distance, it is sufficient to demonstrate that $V(p_{G_{n},f_{0}}
*K_{\sigma_{n}},p_{G_{n},f_{0}}) \to 0$ as $n \to \infty$. Firstly, assume that $f_{0}(x|
\theta)$ is continuous and vanishes outside a compact set which is independent of $
\theta$ and $\Sigma$. For any large number $M$, we split $K=K'+K''$ where $K'=K 1_{\|
x|\ \leq M}$ and $K''=K 1_{\|x|\ >M}$.  Now, by using Young's inequality we obtain
\begin{eqnarray}
& & \int {|p_{G_{n},f_{0}}*K_{\sigma_{n}}(x)-p_{G_{n},f_{0}}(x)|}\textrm{d}x \nonumber \\
& & \hspace{8 em} \leq \int {\biggr|p_{G_{n},f_{0}}*K_{\sigma_{n}}'(x)-p_{G_{n},f_{0}}(x)\int{K'_{\sigma_{n}}(y)}dy\biggr|}\textrm{d}x \nonumber \\
& & \hspace{8 em} + \int {|p_{G_{n},f_{0}}*K''_{\sigma_{n}}(x)|}\textrm{d}x + \int {p_{G_{n},f_{0}}(x)}\textrm{d}x\int{K''_{\sigma_{n}}(x)}\textrm{d}x \nonumber \\
& & \hspace{8 em}  \leq \int \limits_{A}{\biggr|p_{G_{n},f_{0}}*K_{\sigma_{n}}(x)-p_{G_{n},f_{0}}(x)\int{K'_{\sigma_{n}}(y)}dy\biggr|}\textrm{d}x +2 \int{K''_{\sigma_{n}}(x)}\textrm{d}x. \nonumber
\end{eqnarray}
for some compact set $A$. It is clear that for any $\epsilon>0$, we can choose 
$M(\epsilon)$ such that as $M>M(\epsilon)$, ${\displaystyle \int{K''_{\sigma_{n}}(x)}
\textrm{d}x=\int{K''(x)}\textrm{d}x}<\epsilon$. Regarding the first term in the right hand side of the above 
display, by denoting $G_{n}=\sum \limits_{i=1}^{m_{n}}{p_{i}^{n}\delta_{\theta_{i}
^{n}}}$ we obtain
\begin{eqnarray}
& & \int \limits_{A}{\biggr|p_{G_{n},f_{0}}*K_{\sigma_{n}}(x)-p_{G_{n},f_{0}}(x)\int{K'_{\sigma_{n}}(y)}dy\biggr|}\textrm{d}x \nonumber \\
& & \hspace{6 em} \leq \int \limits_{A}{\int {|p_{G_{n},f_{0}}(x-y)-p_{G_{n},f_{0}}(x)| K'_{\sigma_{n}}(y)|}}dy\textrm{d}x \nonumber \\
& & \hspace{6 em} \leq \int \limits_{A}{\int {\sum \limits_{i=1}^{m_{n}}{p_{i}^{n}|f_{0}(x-y|\theta_{i}^{n})-f_{0}(x|\theta_{i}^{n})|}K'_{\sigma_{n}}(y)|}}dy\textrm{d}x  \nonumber \\
& & \hspace{6 em} \leq \omega(M\sigma_{n})\int \limits_{A}{\int {|K'_{\sigma_{n}}(y)|}}dy\textrm{d}x \leq \omega(M\sigma_{n})\mu(A) \to 0 \nonumber
\end{eqnarray}
where $\omega(t)=\mathop {\sup }\limits_{||x-y|| \leq t}{|f_{0}(x|\theta)-f_{0}(y|
\theta)|}$ denotes the modulus of continuity of $f_{0}$ and $\mu$ denotes the Lebesgue 
measure. Therefore, the conclusion of this lemma holds for that setting of $f_{0}(x|\theta)$.

Regarding the general setting of $f_{0}(x|\theta)$, for any $\epsilon>0$ since $\Theta$ 
is a bounded set, we can find a continous function $g(x|\theta)$ being supported on a 
compact set $B(\epsilon)$ that is independent of $\theta \in \Theta$ such that $
{\displaystyle \int {\left|f_{0}(x|\theta)-g(x|\theta)\right|}\textrm{d}x <\epsilon}$. Hence, we obtain
\begin{eqnarray}
\int {|p_{G_{n},f_{0}}*K_{\sigma_{n}}(x)-p_{G_{n},f_{0}}(x)|}\textrm{d}x & \leq & \int {\biggr|\biggr(p_{G_{n},f_{0}}-p_{G_{n},g}\biggr)*K_{\sigma_{n}}(x)\biggr|}\textrm{d}x \nonumber \\
& & \hspace{-8 em} + \int {|p_{G_{n},f_{0}}(x)-p_{G_{n},g}(x)|}\textrm{d}x+\int {|p_{G_{n},g}*K_{\sigma_{n}}(x)-p_{G_{n},g}(x)|}\textrm{d}x \nonumber \\
& \leq & 2\epsilon+\int {|p_{G_{n},g}*K_{\sigma_{n}}(x)-p_{G_{n},g}(x)|}\textrm{d}x \nonumber
\end{eqnarray}
where ${\displaystyle \int {|p_{G_{n},g}*K_{\sigma_{n}}(x)-p_{G_{n},g}(x)|}\textrm{d}x \to 0}$ as $n \to \infty$. We achieve the conclusion of the lemma.
\begin{lemma} \label{lemma:bandwith_vary_hellinger_bound}
Assume that $f_{0}$ and $K$ satisfy condition (P.1) in Theorem 
\ref{theorem:convergence_rate_mixing_measure}. Furthermore, $K$ has an integrable 
radial majorant $\Psi \in L_{1}(\mu)$
where $\Psi(x)=\mathop {\sup }\limits_{||y|| \geq ||x||}{|K(y)|}$. Then, we can find a positive constant $\epsilon_{1}^{0}$ such that as $\sigma \leq \epsilon_{1}^{0}$, for any $G \in \mathcal{O}_{k_{0}}$ we have
\begin{eqnarray}
h(p_{G,f_{0}}*K_{\sigma_{0}},p_{G_{0},f_{0}}*K_{\sigma_{0}}) \geq V(p_{G,f_{0}}*K_{\sigma_{0}},p_{G_{0},f_{0}}*K_{\sigma_{0}}) \gtrsim W_{1}(G,G_{0}). \nonumber
\end{eqnarray}
, i.e., $C_{1}(\sigma_{0}) \geq C_{1}$ as $\sigma_{0} \to 0$ where $C_{1}$ only depends on $G_{0}$.
\end{lemma}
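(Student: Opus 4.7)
The inequality comparing $h$ and $V$ is (up to the universal constant absorbed in $\gtrsim$) the classical Le Cam bound $V \leq \sqrt{2} h$, so the substantive content of the lemma is the uniform lower bound $V(p_{G,f_0}*K_{\sigma_0}, p_{G_0,f_0}*K_{\sigma_0}) \gtrsim W_1(G, G_0)$, valid simultaneously for all $G \in \mathcal{O}_{k_0}$ and all sufficiently small $\sigma_0 > 0$. I will prove this by a compactness/contradiction argument, in the spirit of the proof of Proposition \ref{proposition:first_order_identifiability}, but run jointly in $G$ and $\sigma_0$.

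Assume the bound fails; then one extracts $\sigma_n \downarrow 0$ and $G_n \in \mathcal{O}_{k_0}$ such that $V(p_{G_n,f_0}*K_{\sigma_n}, p_{G_0,f_0}*K_{\sigma_n}) / W_1(G_n, G_0) \to 0$. Compactness of $\Theta$ and the fact that each $G_n$ has at most $k_0$ atoms allow me to pass to a subsequence with $G_n \to G^{*}$ in $W_1$ for some $G^{*} \in \mathcal{O}_{k_0}$, and I split into two cases. If $W_1(G^{*}, G_0) > 0$, then the numerator itself tends to $0$; combining the total-variation analogue of Lemma \ref{lemma:convergence_bandwidth} (which follows from the same $L^1$-proof given in the excerpt applied to both $G_n$ and $G_0$) with the pointwise convergence $p_{G_n,f_0}(x) \to p_{G^{*},f_0}(x)$ implied by the first-order uniform Lipschitz property and Fatou's lemma, I get $V(p_{G^{*},f_0}, p_{G_0,f_0}) = 0$, so $G^{*} = G_0$ by identifiability of $f_0$, contradicting $W_1(G^{*}, G_0) > 0$.

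In the remaining case $G^{*} = G_0$ we have $W_1(G_n, G_0) \to 0$, so eventually $G_n$ has exactly $k_0$ atoms $\theta_i^n \to \theta_i^0$ with weights $p_i^n \to p_i^0$. Writing $\Delta_n := \bigl((p_i^n - p_i^0)_i, (\theta_i^n - \theta_i^0)_i\bigr)$, whose norm satisfies $\|\Delta_n\| \asymp W_1(G_n, G_0)$, a first-order Taylor expansion of each $f_0(\cdot|\theta_i^n)$ around $\theta_i^0$ followed by convolution with $K_{\sigma_n}$ gives
\begin{align*}
\frac{(p_{G_n,f_0}-p_{G_0,f_0})*K_{\sigma_n}(x)}{\|\Delta_n\|}
&= \sum_{i=1}^{k_0}\Bigl[\alpha_i^n (f_0*K_{\sigma_n})(x|\theta_i^0) \\
&\quad\; + (\beta_i^n)^T \nabla_\theta (f_0*K_{\sigma_n})(x|\theta_i^0)\Bigr] + R_n(x),
\end{align*}
where $(\alpha^n,\beta^n) := \Delta_n/\|\Delta_n\|$ has unit norm and $\int |R_n(x)|\,\textrm{d}x = o(1)$ uniformly in $\sigma_n$ by the first-order uniform Lipschitz hypothesis and the fact that convolution with $K_{\sigma_n}$ is a contraction in $L^1$. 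Passing to a further subsequence with $(\alpha^n,\beta^n) \to (\alpha^{*},\beta^{*})$, $\|(\alpha^{*},\beta^{*})\| = 1$, the integrable radial majorant $\Psi$ of $K$ supplies a $\sigma_n$-uniform integrable envelope, so the standard convolution approximation argument yields $L^1$-convergence $f_0*K_{\sigma_n}(\cdot|\theta_i^0) \to f_0(\cdot|\theta_i^0)$ and $\nabla_\theta(f_0*K_{\sigma_n})(\cdot|\theta_i^0) \to \nabla_\theta f_0(\cdot|\theta_i^0)$ as $\sigma_n \downarrow 0$. The assumed vanishing of the $L^1$-norm of the left-hand side above therefore forces $\sum_i [\alpha_i^{*} f_0(x|\theta_i^0) + (\beta_i^{*})^T \nabla_\theta f_0(x|\theta_i^0)] = 0$ almost everywhere, contradicting the first-order identifiability of $f_0$.

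The main obstacle will be the near-case limit passage: one must control the Taylor remainder $R_n$ and the convolved kernels $f_0*K_{\sigma_n}$ and their $\theta$-derivatives in $L^1$ uniformly in $\sigma_n$. The integrable radial majorant hypothesis on $K$ is exactly what is needed to invoke dominated convergence for these two limits.
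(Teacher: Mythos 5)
Your proposal follows essentially the same route as the paper: a compactness/contradiction argument split into the case $W_1(G_n,G_0)\not\to 0$ (handled by Fatou's lemma and identifiability of the limiting mixture) and the case $W_1(G_n,G_0)\to 0$ (handled by a first-order Taylor expansion, normalization of the coefficient vector, convergence of $f_0*K_{\sigma_n}$ and its $\theta$-derivatives guaranteed by the integrable radial majorant, and first-order identifiability of $f_0$); the paper organizes the two cases as ``$W_1(G,G_0)\vee\sigma_0$ small'' versus ``$W_1(G,G_0)>\epsilon_1^0$'' rather than by the limit point $G^{*}$, but the content is the same. The one step you should restate is the claim $\int|R_n(x)|\,\textrm{d}x=o(1)$: the uniform Lipschitz condition of Definition \ref{definition:uniform_Lipschitz_condition} controls the Taylor remainder pointwise in $x$ (with a constant independent of $x$) but gives no integrability over $x\in\mathbb{R}^d$, so the paper instead applies Fatou's lemma to $V(\cdot,\cdot)/d(G_n,G_0)$ and argues with the pointwise $\liminf$, which avoids needing any $L^1$ bound on the remainder; with that substitution your argument goes through.
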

\begin{proof}
We divide the proof of this lemma into two key steps
\paragraph{Step 1:} We firstly demontrate the following result
\begin{eqnarray}
\mathop {\lim }\limits_{\epsilon \to 0}{\mathop {\inf }\limits_{G \in \mathcal{O}_{k_{0}},\sigma_{0} >0}{\left\{\dfrac{V(p_{G,f_{0}}*K_{\sigma_{0}},p_{G_{0},f_{0}}*K_{\sigma_{0}})}{W_{1}(G,G_{0})}: \ W_{1}(G,G_{0}) \vee \sigma_{0} \leq \epsilon \right\}}} >0. \label{eqn:appendix_one}
\end{eqnarray}
The proof idea of the above inequality is essentially similar to that from the proof of 
Theorem 3.1 in \citep{Ho-Nguyen-EJS-16}. Here, we provide such proof for the 
completeness. Assume that the
conclusion of inequality \eqref{eqn:appendix_one} does not hold. Therefore, we can find two sequences $\left\{G_{n}\right\}$ and $\left\{\sigma_{0,n}\right\}$ such that $V(p_{G_{n},f_{0}}
*K_{\sigma_{0,n}},p_{G_{0},f_{0}}*K_{\sigma_{0,n}})/W_{1}(G_{n},G_{0}) \to 0$ where
$W_{1}(G_{n},G_{0}) \to 0$ and $\sigma_{0,n} \to 0$ as $n \to \infty$. As $G_{n} \in
\mathcal{O}_{k_{0}}$, it implies that there exists a subsequence $\left\{G_{n_{m}}\right\}$ of $\left\{G_{n}\right\}$ such that $G_{n_{m}}$ has exactly $k_{0}$ elements for all $m
$. Without loss of generality, we replace this subsequence by the whole sequence $\left\{G_{n}\right\}$. Now, we can represent $G_{n}$ as $G_{n}=\mathop {\sum }\limits_{i=1}
^{k_{0}}{p_{i}^{n}\delta_{\theta_{i}^{n}}}$ such that $(p_{i}^{n},
\theta_{i}^{n}) \to (p_{i}^{0},\theta_{i}^{0})$. Similar to the argument in Step
1 from the proof of Theorem 3.1 in \cite{Ho-Nguyen-EJS-16}, we have $W_{1}(G_{n},G_{0})
\lesssim d(G_{n},G_{0})$ where $d(G_{n},G_{0})=\sum \limits_{i=1}^{k_{0}}{p_{i}^{n}\|
\Delta \theta_{i}^{n}\|+|\Delta p_{i}^{n}|}$ and $\Delta p_{i}^{n}
=p_{i}^{n}-p_{i}^{0}, \Delta \theta_{i}^{n}=\theta_{i}^{n}-\theta_{i}^{0}$ for all $1 \leq i \leq k_{0}$. It implies that
$V(p_{G_{n},f_{0}}*K_{\sigma_{0,n}},p_{G_{0},f_{0}}*K_{\sigma_{0,n}})/
d(G_{n},G_{0}) \to 0$.

Now, we denote ${\displaystyle g_{n}(x|\theta)=\int {f_{0}(x-y|\theta)K_{\sigma_{0,n}}(y)}dy}$ for all $\theta \in \Theta$. Similar to Step 2 from the proof of Theorem 3.1 in \cite{Ho-Nguyen-EJS-16}, by means of Taylor expansion up to the first order we can represent
\begin{eqnarray}
\dfrac{p_{G_{n},f_{0}}*K_{\sigma_{0,n}}(x)-p_{G_{0},f_{0}}*K_{\sigma_{0,n}}(x)}{d(G_{n},G_{0})} \asymp \dfrac{1}{d(G_{n},G_{0})}\biggr(\sum \limits_{i=1}^{k_{0}}{\Delta p_{i}^{n}g_{n}(x|\theta_{i}^{0})+p_{i}^{n}\dfrac{\partial{g_{n}}}{\partial{\theta}}(x|\theta_{i}^{0})}\biggr) \nonumber
\end{eqnarray}
which is the linear combinations of the elements of $g_{n}(x|\theta_i^0), 
\dfrac{\partial{g_{n}}}{\partial{\theta}}(x|\theta_i^0)$ for $1 \leq i \leq k_0$. Denote 
$m_{n}$ to be the maximum of the absolute values of these coefficients. We can argue 
that $m_{n} \not \to 0$ as $n \to \infty$. Additionally, since $K$ has an integrable radial 
majorant $\Psi \in L_{1}(\mu)$, from Theorem 3 in Chapter 2 of \cite{Devroye-1985}, we 
have $g_{n}(x|\theta) \to f_{0}(x|\theta)$ and $\dfrac{\partial{g_{n}}}{\partial{\theta}}
(x|\theta) \to \dfrac{\partial{f_{0}}}{\partial{\theta}}(x|\theta)$ for almost surely $x$ 
and for any $\theta \in \Theta$. Therefore, we obtain
\begin{eqnarray}
\dfrac{1}{m_{n}}\dfrac{d_{n}\biggr(p_{G_{n},f_{0}}*K_{\sigma_{0,n}}(x)-p_{G_{0},f_{0}}*K_{\sigma_{0,n}}(x)\biggr)}{d(G_{n},G_{0})} \to \sum \limits_{i=1}^{k_{0}}{\alpha_{i}f_{0}(x|\theta_{i}^{0})+\beta_{i}^{T}\dfrac{\partial{f_{0}}}{\partial{\theta}}(x|\theta_{i}^{0})} \nonumber
\end{eqnarray}
where not all the elements of $\alpha_{i},\beta_{i}$ equal to 0. Due to the first order 
identifiability of $f_{0}$ and the Fatou's lemma, $V(p_{G_{n},f_{0}}
*K_{\sigma_{0,n}},p_{G_{0},f_{0}}*K_{\sigma_{0,n}})/d(G_{n},G_{0}) \to 0$ will lead 
to $\alpha_{i}=0,\beta_{i}=\vec{0} \in \mathbb{R}^{d_{1}}$ for all $1 \leq i \leq k_{0}$, 
which is a contradiction. We achieve the conclusion of \eqref{eqn:appendix_one}.
\paragraph{Step 2:} The result of \eqref{eqn:appendix_one} implies that we can find a 
positive number $\epsilon_{1}^{0}$ such that as $W_{1}(G,G_{0}) \vee \sigma_{0} \leq \epsilon_{1}^{0}$, we have
\begin{eqnarray}
h(p_{G,f_{0}}*K_{\sigma_{0}},p_{G_{0},f_{0}}*K_{\sigma_{0}}) \geq V(p_{G,f_{0}}*K_{\sigma_{0}},p_{G_{0},f_{0}}*K_{\sigma_{0}}) \gtrsim W_{1}(G,G_{0}).
\end{eqnarray}
In order to extend the above inequality to any $G \in \mathcal{O}_{k_{0}}$, it is sufficient to demonstrate that
\begin{eqnarray}
\mathop {\inf }\limits_{\sigma_{0}<\epsilon_{1}^{0}, W_{1}(G,G_{0})>\epsilon_{1}^{0}}{\dfrac{h(p_{G,f_{0}}*K_{\sigma_{0}},p_{G_{0},f_{0}}*K_{\sigma_{0}})}{W_{1}(G,G_{0})}}>0. \nonumber
\end{eqnarray}
In fact, if the above result does not hold, we can find two sequences $G_{n}' \in \mathcal{O}
_{k_{0}}$ and $\sigma_{0,n}'$ such that $W_{1}(G_{n}',G_{0})>\epsilon_{1}^{0}$, $
\sigma_{0,n}' \leq \epsilon_{1}^{0}$ and $h(p_{G_{n}',f_{0}}*K_{\sigma_{0,n}'},p_{G_{0},f_{0}}*K_{\sigma_{0,n}'})/W_{1}(G_{n}',G_{0}) \to 0$ as $n \to \infty$. Since $\Theta$ is closed bounded set, we can find two subsequences $\left\{G_{n_{m}}'\right\}$ and $\left\{\sigma_{0,n_{m}}'\right\}$ of $\left\{G_{n}'\right\}$ and
$\left\{\sigma_{n}'\right\}$ respectively such that $W_{1}(G_{n_{m}}',G') \to 0$ and $|
\sigma_{0,n_{m}}'-\sigma'| \to 0$ as $m \to \infty$ where $G' \in \mathcal{O}_{k_{0}}$
and $\sigma' \in [0,\epsilon_{1}^{0}]$.

Due to the first order Lipschitz continuity of $f*K_{\sigma_{0,n_{m}}'}$ for any $m
\geq 1$, we achieve that
\begin{eqnarray}
p_{G_{n_{m}',f_{0}}}*K_{\sigma_{0,n_{m}}'}(x) \to p_{G',f_{0}}
*K_{\sigma'}(x) \nonumber
\end{eqnarray}
for any $x \in \mathcal{X}$. Here, $p_{G',f_{0}}*K_{\sigma'}=
p_{G',f_{0}}$ when $\sigma'=0$. Therefore, by utilizing the Fatou's argument, we obtain
$h(p_{G',f_{0}}*K_{\sigma'},p_{G_{0},f_{0}}*K_{\sigma'})=0$, which implies that $G'
\equiv G_{0}$, a contradiction. As a consequence, when $\sigma_{0} \leq \epsilon_{1}^{0}$,
for any $G \in \mathcal{O}_{k_{0}}$ we have
\begin{eqnarray}
h(p_{G,f_{0}}*K_{\sigma_{0}},p_{G_{0},f_{0}}*K_{\sigma_{0}}) \geq V(p_{G,f_{0}}*K_{\sigma_{0}},p_{G_{0},f_{0}}*K_{\sigma_{0}}) \gtrsim W_{1}(G,G_{0}). \nonumber
\end{eqnarray}
We achieve the conclusion of the lemma.
\end{proof}
\comment{
\paragraph{PROOF OF LEMMA  \ref{lemma:support_misspecified_setting}}
To obtain the conclusion of this lemma, it is equivalent to demonstrate that
\begin{eqnarray}
\mathop {\inf }\limits_{G \in \mathcal{E}_{k_{*}-1}}{h(p_{G,f}*K_{\sigma_{1}},p_{G_{0},f_{0}}*K_{\sigma_{0}})/W_{1}(G,G_{0})}>0. \nonumber
\end{eqnarray}
Assume by the contrary that the above conclusion does not hold. It implies that we can 
find sequence of measures $G_{n} \in \mathcal{E}_{k_{*}-1}$ such that $h(p_{G_{n},f}
*K_{\sigma_{1}},p_{G_{0},f_{0}}*K_{\sigma_{0}})/W_{1}(G_{n},G_{0}) \to 0$ as $n 
\to \infty$. Since $\Theta$ is a closed bounded set, it implies that we can find a 
subsequence of $G_{n}$ that converges to $G' \in \mathcal{E}_{k_{*}-1}$ in $W_{1}$ 
distance. Without loss of generality, we replace this subsequence by the whole sequence of 
$G_{n}$. Since $k_{*} \leq k_{0}$, it implies that $W_{1}(G',G_{0}) \neq 0$. Therefore, 
$W_{1}(G_{n},G_{0}) \not \to 0$ as $n \to \infty$. Hence, we have
\begin{eqnarray}
h(p_{G_{n},f_{n}}*K_{\sigma_{1}},p_{G_{0},f_{0}}*K_{\sigma_{0}}) \to 0 \nonumber
\end{eqnarray}
as $n \to \infty$. Now, from the triangle inequality, we obtain
\begin{eqnarray}
|h(p_{G_{n},f}*K_{\sigma_{1}},p_{G_{0},f_{0}}*K_{\sigma_{0}})-h(p_{G',f}*K_{\sigma_{1}},p_{G_{0},f_{0}}*K_{\sigma_{0}})| \leq h(p_{G_{n},f}*K_{\sigma_{1}},p_{G',f}*K_{\sigma_{1}}). \nonumber
\end{eqnarray}
As $W_{1}(G_{n},G') \to 0$, from the uniform Lipschitz continuity of $f$ and Holder's 
inequality, we obtain $h(p_{G_{n},f}*K_{\sigma_{1}},p_{G',f}*K_{\sigma_{1}}) \leq h(p_{G_{n},f},p_{G',f}) \to 0$ as $n \to \infty$. Thus,
\begin{eqnarray}
h(p_{G_{n},f}*K_{\sigma_{1}},p_{G_{0},f_{0}}*K_{\sigma_{0}}) \to h(p_{G',f}*K_{\sigma_{1}},p_{G_{0},f_{0}}*K_{\sigma_{0}})=0. \nonumber
\end{eqnarray}
Since $f*K_{\sigma_{1}}$ and $f_{0}*K_{\sigma_{0}}$ are distinguishable, the above 
equality can not hold, which is a contradiction. As a consequence, we achieve the 
conclusion of the lemma.}
\begin{lemma} \label{lemma:singularity_level}
Assume that $\widehat{K}(t) \neq 0$ for almost all $t \in \mathbb{R}^{d}$ where $
\widehat{K}(t)$ is the Fourier transform of kernel function $K$. If $I(G_{0},f_{0})$ 
has $r$-th singularity level for some $r \geq 0$, then $I(G_{0},f_{0}*K_{\sigma_{0}})$ 
also has $r$-th singularity level for any $\sigma_{0}>0$.
\end{lemma}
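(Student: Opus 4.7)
The plan is to verify the two defining properties of the $r$-th singularity level as in \eqref{eqn:inequality_singular_Fisher} for the convolved kernel $f_{0}*K_{\sigma_{0}}$ by transferring them from $f_{0}$, exploiting the identity $p_{G,f_{0}*K_{\sigma_{0}}}=p_{G,f_{0}}*K_{\sigma_{0}}$ together with the hypothesis $\widehat{K_{\sigma_{0}}}\neq 0$ almost everywhere. The vanishing-infimum part is nearly free. Young's convolution inequality ($\|K_{\sigma_{0}}\|_{1}=1$) gives
\[
V\bigl(p_{G,f_{0}}*K_{\sigma_{0}},p_{G_{0},f_{0}}*K_{\sigma_{0}}\bigr)\leq V\bigl(p_{G,f_{0}},p_{G_{0},f_{0}}\bigr),
\]
so any sequence $G_{n}\in\mathcal{O}_{k_{0}}$ with $V(p_{G_{n},f_{0}},p_{G_{0},f_{0}})/W_{s}^{s}(G_{n},G_{0})\to 0$ also witnesses the vanishing infimum for $f_{0}*K_{\sigma_{0}}$ at the same order $s=1,\dots,r$.

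The substantive step is the quantitative lower bound $V(p_{G,f_{0}*K_{\sigma_{0}}},p_{G_{0},f_{0}*K_{\sigma_{0}}})\gtrsim W_{r+1}^{r+1}(G,G_{0})$ for all $G\in\mathcal{O}_{k_{0}}$. I would argue by contradiction and compactness. Assume a sequence $G_{n}\in\mathcal{O}_{k_{0}}$ with ratio tending to $0$. Extracting a subsequence with $G_{n}\to G^{*}\in\mathcal{O}_{k_{0}}$ in $W_{1}$, the case $G^{*}\neq G_{0}$ is excluded by Fatou combined with the identifiability of $f_{0}*K_{\sigma_{0}}$, itself inherited from $f_{0}$ by the Fourier reasoning of Lemma \ref{lemma:first_order_convolution}. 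Hence $G^{*}=G_{0}$ and one can localize. A Taylor expansion of $f_{0}*K_{\sigma_{0}}$ in $\theta$ up to order $r+1$ (the uniform Lipschitz property transfers from $f_{0}$ to $f_{0}*K_{\sigma_{0}}$ via Young's inequality applied to the partial derivatives) writes the centered density difference as a linear combination of $\partial^{|\eta|}(f_{0}*K_{\sigma_{0}})/\partial\theta^{\eta}(\cdot|\theta_{i}^{0})$ plus a remainder of size $O(W_{r+1}^{r+1}(G_{n},G_{0}))$. Normalizing the coefficient vector to unit sup-norm and passing to a further subsequence gives a nontrivial family $(\alpha_{\eta}^{(i)})$ such that, by Fatou,
\[
\sum_{|\eta|\leq r+1}\sum_{i=1}^{k_{0}}\alpha_{\eta}^{(i)}\,\frac{\partial^{|\eta|}(f_{0}*K_{\sigma_{0}})}{\partial\theta^{\eta}}(x|\theta_{i}^{0})=0\quad\text{for a.e.\ }x.
\]

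The key Fourier step then transfers this identity to $f_{0}$: using $\widehat{\partial^{|\eta|}(f_{0}*K_{\sigma_{0}})/\partial\theta^{\eta}}(t|\theta)=\widehat{\partial^{|\eta|}f_{0}/\partial\theta^{\eta}}(t|\theta)\,\widehat{K_{\sigma_{0}}}(t)$ and dividing by $\widehat{K_{\sigma_{0}}}$ (nonzero a.e.\ by assumption), Fourier inversion yields the analogous identity with $f_{0}$ in place of $f_{0}*K_{\sigma_{0}}$. Running the same Taylor/compactness construction in reverse then synthesizes a sequence $\widetilde{G}_{n}\to G_{0}$ with $V(p_{\widetilde{G}_{n},f_{0}},p_{G_{0},f_{0}})/W_{r+1}^{r+1}(\widetilde{G}_{n},G_{0})\to 0$, contradicting the assumption that $I(G_{0},f_{0})$ has singularity level exactly $r$. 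The main obstacle I anticipate is this last reverse construction: realizing a nontrivial limiting coefficient vector $(\alpha_{\eta}^{(i)})$ as the Taylor signature of an actual Wasserstein-small perturbation of $G_{0}$, which is the constructive half of the singularity-level machinery of \cite{Ho-Nguyen-Ann-17} and carries most of the technical burden.
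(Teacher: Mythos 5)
Your first half (transferring the vanishing infima via Young's inequality, $V(p_{G,f_{0}}*K_{\sigma_{0}},p_{G_{0},f_{0}}*K_{\sigma_{0}})\le V(p_{G,f_{0}},p_{G_{0},f_{0}})$) is fine, and is a slightly more elementary route to that direction than the paper takes. The genuine problem sits exactly where you flag it: the ``reverse construction.'' Having extracted a nontrivial vector $(\alpha_{\eta}^{(i)})$ with $\sum_{|\eta|\le r+1}\sum_{i}\alpha_{\eta}^{(i)}\,\partial^{|\eta|}f_{0}/\partial\theta^{\eta}(x|\theta_{i}^{0})=0$ a.e., you cannot in general manufacture a sequence $\widetilde{G}_{n}\to G_{0}$ with $V(p_{\widetilde{G}_{n},f_{0}},p_{G_{0},f_{0}})/W_{r+1}^{r+1}(\widetilde{G}_{n},G_{0})\to 0$. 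The Taylor coefficients of a mixture perturbation are not free: they are the constrained polynomials $\xi_{l}^{(\rho)}(G)$ in the mass and atom increments, and an arbitrary null vector of the derivative system need not lie in the closure of the realizable coefficient directions. A linear dependence among the derivatives is necessary but not sufficient for the singularity level to exceed $r$ --- that distinction is the entire point of the singularity-level machinery of \cite{Ho-Nguyen-Ann-17} --- so the contradiction you want does not follow, and the argument as written does not close.

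The repair, which is the paper's actual proof, is to never discard the sequence $G_{n}$ you started with. Writing the $\rho$-minimal form $\bigl(p_{G_{n},f_{0}}(x)-p_{G_{0},f_{0}}(x)\bigr)/W_{\rho}^{\rho}(G_{n},G_{0})=\sum_{l}\bigl(\xi_{l}^{(\rho)}(G_{n})/W_{\rho}^{\rho}(G_{n},G_{0})\bigr)H_{l}^{(\rho)}(x)+o(1)$, convolution with $K_{\sigma_{0}}$ produces the \emph{same} coefficient functionals $\xi_{l}^{(\rho)}(G_{n})$ in front of the convolved basis $H_{l}^{(\rho)}*K_{\sigma_{0}}$; your Fourier observation is needed only to check that $\{H_{l}^{(\rho)}*K_{\sigma_{0}}\}_{l}$ stays linearly independent when $\widehat{K}\neq 0$ a.e. Hence the normalized coefficients all vanish along a sequence for $f_{0}$ if and only if they do so for $f_{0}*K_{\sigma_{0}}$, and since the singularity level (Definitions 3.1--3.3 of \cite{Ho-Nguyen-Ann-17}) is defined precisely by this vanishing behavior, the two levels coincide in both directions at once. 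Note also that the lemma asserts equality of singularity levels in that original sense, not merely the two consequences displayed in \eqref{eqn:inequality_singular_Fisher}, so even a repaired version of your argument would establish a slightly weaker statement than the one claimed.
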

\begin{proof}
Remind that, $I(G_{0},f_{0})$ has $r$-th singularity level is equivalent to the fact that 
$G_{0}$ is $r$-singular relative to the ambient space $\mathcal{O}_{k_{0}}$ (cf. 
Definition 3.1 and Definition 3.2 in \citep{Ho-Nguyen-Ann-17}). Now, for any $\rho \in 
\mathbb{N}$, given any sequence $G_{n}=\sum \limits_{i=1}^{k_{n}}{p_{i}^{n}
\delta_{\theta_{i}^{n}}} \in \mathcal{O}_{k_{0}}$ such that $G_{n} \to G_{0}$ in 
$W_{\rho}$ metric. We can find a subsequence of $G_{n}$ such that $k_{n}=k_{0}$ and 
each atoms of $G_{0}$ will have exactly one component of $G_{n}$ converges to. 
Without loss of generality, we replace the subsequence of $G_{n}$ by its whole sequence 
and relabel the atoms of $G_{n}$ such that $(p_{i}^{n},\theta_{i}^{n}) \to (p_{i}^{0},
\theta_{i}^{0})$ for all $1 \leq i \leq k_{0}$. Denote $\Delta \theta_{i}^{n}=\theta_{i}
^{n}-\theta_{i}^{0}$ and $\Delta p_{i}^{n}=p_{i}^{n}-p_{i}^{0}$ for all $1 \leq i \leq 
k_{0}$. From Definition 3.1 in \cite{Ho-Nguyen-Ann-17}, a $\rho$-minimal form of $G_{n}
$ from Taylor expansion up to the order $\rho$ satisfies
\begin{eqnarray}
\dfrac{p_{G_{n},f_{0}}(x)-p_{G_{0},f_{0}}(x)}{W_{\rho}^{\rho}(G_{n},G_{0})}=\sum_{l=1}^{T_\rho}
\biggr (\frac{\xi_{l}^{(\rho)}(G_{n})}{W_\rho^\rho(G_n,G_{0})} \biggr ) H_{l}^{(\rho)}(x) + o(1), \label{eqn:lemma_singularity_level_first}
\end{eqnarray}
for all $x$. Here, $H_{l}^{(\rho)}(x)$are linearly independent functions of $x$ for all $l$, and coefficients $\xi_{l}^{(\rho)}(G)$ are polynomials of
the components of $\Delta \theta_{i}$ and $\Delta p_{i}$ for $l$ ranges from 1 to a finite $T_\rho$. From the above representation, we achieve
\begin{eqnarray}
\dfrac{p_{G_{n},f_{0}*K_{\sigma_{0}}}(x)-p_{G_{0},f_{0}*K_{\sigma_{0}}}(x)}{W_{\rho}^{\rho}(G_{n},G_{0})}=\sum_{l=1}^{T_\rho}
\biggr (\frac{\xi_{l}^{(\rho)}(G_{n})}{W_\rho^\rho(G_n,G_{0})} \biggr ) H_{l}^{(\rho)}*K_{\sigma_{0}}(x) + o(1),
\label{eqn:lemma_singularity_level_second}
\end{eqnarray}
where $H_{l}^{(\rho)}*K_{\sigma_{0}}(x)={\displaystyle \int {H_{l}^{(\rho)}(x-
y)K_{\sigma_{0}}(y)}dy}$ for all $1 \leq l \leq T_{\rho}$. We will show that $H_{l}
^{(\rho)}*K_{\sigma_{0}}(x)$ are linearly independent functions of $x$ for all $1 \leq l 
\leq T_{\rho}$. In fact, assume that we can find the coefficients $\alpha_{l} \in 
\mathbb{R}$ such that
\begin{eqnarray}
\sum \limits_{l=1}^{T_{\rho}}{\alpha_{l}H_{l}^{(\rho)}*K_{\sigma_{0}}(x)}=0 \nonumber
\end{eqnarray}
for all $x$. By means of Fourier transformation in both sides of the above equation, we obtain
\begin{eqnarray}
\widehat{K}_{\sigma_{0}}(t)\biggr(\sum \limits_{l=1}^{T_{\rho}}{\alpha_{l}\widehat{H_{l}}^{(\rho)}(t)}\biggr)=0 \nonumber
\end{eqnarray}
for all $t \in \mathbb{R}^{d}$. As $\widehat{K}_{\sigma_{0}}(t)=\widehat{K}
(\sigma_{0} t) \neq 0$ for all $t \in \mathbb{R}^{d}$ and $H_{l}^{(\rho)}(x)$ for all $l$ 
are linearly independent functions of $x$ for all $1 \leq l \leq T_{\rho}$, the above 
equation implies that $\alpha_{l}=0$ for all $1 \leq l \leq T_{\rho}$. Therefore, $H_{l}
^{(\rho)}*K_{\sigma_{0}}(x)$ are linearly independent functions of $x$ for all $1 \leq l 
\leq T_{\rho}$ and $\rho \in \mathbb{N}$.

According to Definition 3.2 in \cite{Ho-Nguyen-Ann-17}, since $I(G_{0},f_{0})$ has $r$-th 
singularity level, it implies that for any sequence $G_{n} \in \mathcal{O}_{k_{0}}$ such 
that $W_{r+1}^{r+1}(G_{n},G_{0}) \to 0$, we do not have all the ratios $\xi_{l}^{(r+1)}
(G_{n})/W_{r+1}^{r+1}(G_0,G_{n})$ in \eqref{eqn:lemma_singularity_level_first} go to 0 
for $1 \leq l \leq T_{r+1}$. It means that not all the ratios $\xi_{l}^{(r+1)}(G_{n})/W_{r
+1}^{r+1}(G_0,G_{n})$ in \eqref{eqn:lemma_singularity_level_second} go to 0. 
Additionally, as $I(G_{0},f_{0})$ has $r$-th singularity level, we can find a sequence 
$G_{n}' \in \mathcal{O}_{k_{0}}$ such that $W_{r}^{r}(G_{n}',G_{0}) \to 0$ and $
\xi_{l}^{(r)}(G_{n}')/W_{r}^{r}(G_0,G_{n}')$ in \eqref{eqn:lemma_singularity_level_first} 
go to 0 for $1 \leq l \leq T_{r}$. It in turns also means that all the ratios $\xi_{l}^{(r)}
(G_{n}')/W_r^r(G_0,G_{n}')$ in \eqref{eqn:lemma_singularity_level_second} go to 0. As 
a consequence, from Definition 3.3 in \cite{Ho-Nguyen-Ann-17}, we achieve the conclusion 
of the lemma.
\end{proof}
\end{document}